\newcommand{\ol}{\overline}
\newtheorem{theorem}{Theorem}[section]
\newtheorem{lemma}[theorem]{Lemma}
\newtheorem{proposition}[theorem]{Proposition}
\newtheorem{corollary}[theorem]{Corollary}
 \theoremstyle{definition}
\newtheorem{definition}[theorem]{Definition}
\theoremstyle{remark}
\numberwithin{equation}{section}
\begin{document}

\title[Geometric flow on affine manifolds]
{A geometric flow on noncompact affine Riemannian manifolds}

\author{Heming Jiao}
\address{School of Mathematics and Institute for Advanced Study in Mathematics, Harbin Institute of Technology,
         Harbin, Heilongjiang 150001, China}
\email{jiao@hit.edu.cn}

\author{Hanzhang Yin}
\address{School of Mathematics, Harbin Institute of Technology,
         Harbin, Heilongjiang 150001, China}
\email{21b912010@stu.hit.edu.cn}
\thanks{The first author is supported by the NSFC (Grant No. 12271126),
the Natural Science Foundation of Heilongjiang Province (Grant No. YQ2022A006),
and the Fundamental Research Funds for the Central Universities (Grant No. HIT.OCEF.2022030).}


\begin{abstract}

In this paper, we obtain the existence criteria for a geometic flow on noncompact affine Riemannian manifolds.
Our results can be regarded as a real version of Lee-Tam \cite{r4}.
As an application, we prove that a complete noncompact Hessian manifold with nonnegative Hessian sectional curvature
and bounded geometry
is diffeomorphic to $\mathbb{R}^n$ if its tangent bundle has maximal volume growth.

\noindent{Keywords:} Geometric flow; Affine manifolds; Hessian manifolds.

\end{abstract}

\maketitle

\section{Introduction}

Let $(M, \nabla, g_0)$ be a complete noncompact affine Riemannian manifold, with $\nabla$ a flat, torsion free connection
and $g_0$ a Riemannian metric on $M$. In this paper, we consider the geometric flow
\begin{equation}
\label{e1.2}
\left\{ \begin{aligned}
   &\frac{\partial }{\partial t}{{g}}=-\beta (g)  \\
      &g(0)={{g}_{0}}  \\
\end{aligned} \right.
\end{equation}
on $M$, where $\beta (g)$ is defined by
\[
{{\beta }_{ij}}(g)=-{{\partial }_{i}}{{\partial }_{j}}\log \det [g(t)]
\]
locally with respect to an affine coordinate system. When $g_0$ is Hessian, i.e., $g_0$ can be locally expressed
by $g_0=\nabla d\varphi$,
the evolved metrics $g(t)$ along the flow remain Hessian (seeing \cite{r12}) and $-\frac{\beta}{2}$ is called the second Koszul form
of $g(t)$ (see Definition \ref{Kos}).

Affine manifolds arise naturally in the the study of affine differential geometry (\cite{CY77, LYZ05, HW17}).
The Hessian metric, which is also called affine K\"{a}hler manifolds by Cheng-Yau \cite{r10}, is the Riemannian metric
compatible with the flat structure. Hessian metrics play a key role in various fields. They appear in the study of
special K\"{a}hler manifolds (\cite{F99, H99}) as well as in physics (\cite{SYZ96}), statistical manifolds and
information geometry (\cite{Am16, AN00, AJLS17}). The reader is referred to \cite{r11} for more details on
the geometry of Hessian manifolds.

Affine Riemannian manifolds are closely related to Hermitian manifolds. Indeed, the
tangent bundle of an affine Riemannian manifold is a Hermitian manifold, while the Hermitian metric
is K\"{a}hler if and only if the Riemannian metric $g_0$ is Hessian (see Section 2.2 for more details).
When $g_0$ is Hessian, \eqref{e1.2} may be regarded as a real analogue of K\"{a}hler-Ricci flow,
while \eqref{e1.2} may be considered as a real analogue of Chern-Ricci flow for general $g_0$. It is worth
mentioning that not all affine manifolds admit Hessian metrics. Indeed, an affine Hopf manifold dose not admit any
Hessian metric (c.f. Chapter 7 in \cite{r11}). In this paper, we are mainly concerned with the long time behaviour of the geometric flow \eqref{e1.2}
on noncompact affine Riemannian manifolds. The flow \eqref{e1.2} on compact Hessian manifolds was introduced and
studied by Mirghafouri-Malek \cite{r12} and further studied by Puechmorel-T{\^o} \cite{r13} in which \eqref{e1.2}
is called the Hesse-Koszul flow.

Now let's review some researches on Chern-Ricci flow and
K\"{a}hler-Ricci flow. The Ricci flow on Riemannian manifolds was introduced by
Hamilton \cite{Ha82}. Cao \cite{Cao85} first proved the convergence of the K\"{a}hler flow to the canonical K\"{a}hler-Einstein
metric on a compact K\"{a}hler manifold $M$ with its first Chern class $c_1 (M) < 0$ or $c_1 (M) = 0$. The K\"{a}hler-Ricci
flow on noncompact complete K\"{a}hler manifolds was first studied by Shi \cite{Shi89}.
The Chern-Ricci flow on a compact K\"{a}hler manifold was first studied by Gill \cite{Gill11} in a special setting
and was introduced and
generalized by Tosatti-Weinkove \cite{r3} later. Lee-Tam \cite{r4} considered the Chern-Ricci flow on a noncompact
Hermitian manifold. The reader is referred to a survey of Tosatti-Weinkove \cite{TW21} for more details on the Chern-Ricci flow.


Our main goal is to give the existence criteria for \eqref{e1.2} on noncompact affine Riemannian manifolds.
Let $S_A$ be the supremum of $S>0$ so that the flow \eqref{e1.2} has a solution $g(t)$ with initial data $g_0$ such that $g(t)$ is uniformly equivalent to $g_0$ in $M\times [0,S]$
and $S_B$ the supremum of $S>0$ such that there is a smooth bounded function $u$ satisfying
\begin{equation}
\label{e5.1}
g_0-S\beta(g_0)+\nabla du\geq \theta g_0
\end{equation}
for some $\theta>0$.
\begin{theorem}
\label{t1.2}
Let $(M, \nabla, g_0)$ be a complete noncompact affine Riemannian manifold. Assume that

\textbf{\emph{(i)}}
\begin{minipage}[t]{0.9\linewidth}
There exists a smooth real function $\rho$ which is uniformly equivalent to the distance function from a fixed point such that $|d\rho|_{g_0}$, $|\nabla d\rho|_{g_0}$ are uniformly bounded.
\end{minipage}

\textbf{\emph{(ii)}}
\begin{minipage}[t]{0.9\linewidth}
For any point $p \in M^n$, there exists a local coordinates $\{x^1,\ldots ,x^n\}$ around $p$ such that $(g_0)_{ij}(p)=\delta_{ij}$, $\partial_k(g_0)_{ij}(p)$ and $\partial_i(\partial_j (g_0)_{kl}-\partial_l (g_0)_{kj})(p)$ are uniformly bounded, $\partial_l \partial_k(g_0)_{ij}(p)$ are uniformly bounded from above and all the changes
of these local coordinate systems are affine.
\end{minipage}

Then $S_A=S_B$.
\end{theorem}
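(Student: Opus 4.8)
The plan is to reduce the tensor flow \eqref{e1.2} to a scalar parabolic real Monge--Amp\`ere equation and then run a continuity argument. Put $\hat g_t:=g_0-t\beta(g_0)$. Since in an affine chart $\beta(g)_{ij}=-\partial_i\partial_j\log\det g$ and $(\nabla df)_{ij}=\partial_i\partial_j f$, along any solution of \eqref{e1.2} one has $\partial_t(g-\hat g_t)=\nabla d\bigl(\log\tfrac{\det g}{\det g_0}\bigr)$, where $\det g/\det g_0$ is a globally defined function even though $\log\det g_0$ is not; integrating in $t$ from $g(0)=\hat g_0=g_0$ gives $g(t)=\hat g_t+\nabla d\varphi(t)$ with
\[
\partial_t\varphi=\log\frac{\det(\hat g_t+\nabla d\varphi)}{\det g_0},\qquad\varphi(0)=0,
\]
and conversely every solution of this scalar equation produces a solution of \eqref{e1.2}. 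So ``$g(t)$ uniformly equivalent to $g_0$ on $M\times[0,S]$'' is the same as ``$C^{-1}g_0\le\hat g_t+\nabla d\varphi\le Cg_0$ there''.

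The inequality $S_A\le S_B$ is easy: for $S'<S_A$, if $g(t)=\hat g_t+\nabla d\varphi(t)$ solves the flow on $[0,S']$ with $cg_0\le g\le Cg_0$, then $|\partial_t\varphi|=\bigl|\log(\det g/\det g_0)\bigr|$ is bounded, so $u:=\varphi(S')=\int_0^{S'}\partial_t\varphi\,dt$ is smooth and bounded and $g_0-S'\beta(g_0)+\nabla du=\hat g_{S'}+\nabla d\varphi(S')=g(S')\ge cg_0$; thus $S'\le S_B$, and $S'\uparrow S_A$ yields $S_A\le S_B$.

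The substance is $S_B\le S_A$, which I would prove by contradiction. Assume $S_A<S_B$, fix $S'$ with $S_A<S'<S_B$ and a bounded $u$ satisfying \eqref{e5.1} with $S=S'$, and set $\check g_t:=\hat g_t+\tfrac t{S'}\nabla du=\bigl(1-\tfrac t{S'}\bigr)g_0+\tfrac t{S'}\bigl(g_0-S'\beta(g_0)+\nabla du\bigr)$, so that $\check g_t\ge\bigl(1-\tfrac{S_A}{S'}\bigr)g_0=:c_0g_0>0$ on $[0,S_A]$ and $g(t)=\check g_t+\nabla dw$ with $w:=\varphi-\tfrac t{S'}u$. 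On each interval $[0,T]$ with $T<S_A$ on which the flow exists and stays equivalent to $g_0$, I would establish, by maximum-principle arguments localized on the noncompact $M$ via cutoffs built from $\rho$ (hypothesis (i)) in the manner of Lee--Tam, the following estimates, with constants depending only on $S_A,S',u$ and the data in (i)--(ii) and not on $T$: \emph{(a)} a $C^0$ bound $|w|\le C$, the lower bound from the minimum principle --- at a spatial minimum of $w$ one has $\nabla dw\ge0$, hence $\partial_t w\ge\log\tfrac{\det\check g_t}{\det g_0}-\tfrac u{S'}\ge n\log c_0-C$ --- and the upper bound from the maximum principle using that $\hat g_t$ is controlled from above by hypothesis (ii); \emph{(b)} a second-order estimate $\tr_{g_0}g_t\le C$, obtained from the evolution of $\log\tr_{g_0}g_t-Aw$, where the dangerous terms $\lesssim\tr_{g_t}g_0$ --- carrying the second derivatives of $g_0$, bounded one-sidedly by (ii), and the ``torsion'' term $\partial_i(\partial_j(g_0)_{kl}-\partial_l(g_0)_{kj})$ --- are absorbed, for $A$ large, by $A\,\tr_{g_t}\check g_t\ge Ac_0\tr_{g_t}g_0$, the residual $-A\log\det(g_0^{-1}g_t)$ being dominated by $-\tr_{g_t}g_0$ via the arithmetic--geometric mean inequality, after which $|w|\le C$ from (a) removes the exponential factor; \emph{(c)} a determinant bound $\det g_t\ge c\det g_0$ (equivalently $\partial_t\varphi\ge-C$), from the minimum principle for $\Phi:=(S'-t)\,\partial_t w+w-nt$, whose evolution gives $(\partial_t-\Delta_{g_t})\Phi=\tr_{g_t}(S'\partial_t\check g_t+g_0)-2n=\tr_{g_t}\check g_{S'}-2n\ge-2n$ modulo a drift term vanishing at spatial extrema, with $\Phi|_{t=0}=-u$ bounded and $S'-t\ge S'-S_A>0$. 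Together (a)--(c) give $c'g_0\le g_t\le Cg_0$ on $[0,T]$ with $T$-independent constants; interior parabolic estimates of Evans--Krylov and Schauder type, again localized using (i) and the bounded geometry furnished by (ii), then bound all higher derivatives up to time $S_A$, so by short-time existence --- which I take to be established separately, e.g.\ by Shi's method or an exhaustion argument --- the flow extends past $S_A$ still uniformly equivalent to $g_0$, contradicting the definition of $S_A$. Hence $S_A\ge S_B$, and with the first inequality $S_A=S_B$.

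I expect the real obstacle to be step (b) on a noncompact manifold: one cannot test at a global maximum, so $\log\tr_{g_0}g_t-Aw$ must be coupled simultaneously with the cutoff $\rho$ and with $w$ so that $-A\,\Delta_{g_t}w$ absorbs both the curvature/torsion contributions of $g_0$ and the cutoff error, and one must check that only the second-derivative quantities of $g_0$ bounded (from one side) in hypothesis (ii) appear --- no surviving term needing a two-sided bound on $\partial^2 g_0$ or an upper bound on $\beta(g_0)$. A secondary, bookkeeping-heavy point is keeping the constants in (a)--(c) uniform as $T\uparrow S_A$, which is exactly what converts the a priori estimates into the extension step, hence into $S_A\ge S_B$.
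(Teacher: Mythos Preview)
Your overall shape---reduce to the scalar equation, prove $S_A\le S_B$ trivially, and for the reverse direction combine a $C^0$ bound, a $\dot\varphi$ lower bound via a $(S'-t)\dot\varphi+\varphi$-type quantity, and a trace bound---matches the paper's. But there is a genuine gap at the step you flag only parenthetically: ``short-time existence --- which I take to be established separately, e.g.\ by Shi's method or an exhaustion argument.'' Under hypotheses (i)--(ii) alone this is \emph{not} available. Condition (ii) bounds $\partial_l\partial_k(g_0)_{ij}$ only from above, so $g_0$ need not have bounded curvature or bounded geometry of order $2+\alpha$; neither Shi's method nor the paper's Lemma~\ref{l3.5} applies to $g_0$ directly. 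If $S_A=0$ (which you have not ruled out), your contradiction argument never starts. This is not a technicality: producing \emph{any} solution on the noncompact $M$ from these hypotheses is the heart of the proof.

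The paper handles this by working not on $M$ but on exhaustion domains $U_{\rho_0}=\{\rho<\rho_0\}$ equipped with a conformally modified metric $h_0=e^{2F}g_0$, where $F=\mathfrak F(\rho/\rho_0)$ blows up at $\partial U_{\rho_0}$ (the Hochard/Lee--Tam trick). One checks (Lemmas~\ref{l5.3}--\ref{l5.4}) that $(U_{\rho_0},h_0)$ is complete with bounded geometry of \emph{infinite} order, and that $h_0$ still satisfies \textbf{(a2)} and \textbf{(a3)} with constants arbitrarily close to those of $g_0$ once $\rho_0$ is large. Now Lemma~\ref{l3.5} gives short-time existence for the flow from $h_0$; the a~priori estimates (Lemmas~\ref{l4.4}--\ref{l4.5}) extend it to $[0,S-\epsilon)$ with bounds independent of $\rho_0$; and since $h_0=g_0$ on $\{\rho<(1-\kappa+\kappa^2)\rho_0\}$, a diagonal limit as $\rho_0\to\infty$, $\epsilon\to0$ produces the solution on $M\times[0,S)$ for every $S<S_B$. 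The exhaustion is not just a device for the maximum principle; it is how the solution is \emph{constructed}. A secondary difference: for the trace estimate the paper uses the barrier $\log\Upsilon+A\phi^{-1}-\epsilon P$ with $\phi$ built from $(S_2-t)\dot\varphi+\varphi+nt-\tfrac{S_2}{S}u$, rather than your $\log\operatorname{tr}_{g_0}g-Aw$; the extra $-2A\phi^{-3}|\nabla\phi|_g^2$ term is what absorbs the gradient cross-term coming from $\mathrm{I}$ in Lemma~\ref{l4.2}, which your linear barrier does not obviously handle.
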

When $g_0$ is a Hessian metric, by Theorem \ref{l3.3}, we have
\begin{theorem}
\label{thm2}
Let $(M, \nabla, g_0)$ be a complete noncompact Hessian manifold. If for any point $p \in M^n$, there exists a local coordinates $\{x^1,\ldots ,x^n\}$ around $p$ such that $(g_0)_{ij}(p)=\delta_{ij}$, $|\partial_k(g_0)_{ij}|(p)\leq K$, the Hessian curvature $|Q_{ijkl}(p)|\leq K$ for some constant $K$ independent of $p$ and the
local affine coordinate systems. Then $S_A=S_B$.
\end{theorem}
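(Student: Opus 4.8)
The plan is to deduce the statement from Theorem \ref{t1.2} by showing that its hypotheses imply the two structural conditions \textbf{\emph{(i)}} and \textbf{\emph{(ii)}} of that theorem; condition \textbf{\emph{(ii)}} will follow from a direct computation with the Hessian curvature tensor, while condition \textbf{\emph{(i)}} — the input that does not come from the given pointwise bounds alone — is supplied by Theorem \ref{l3.3}. Once both are in place, quoting Theorem \ref{t1.2} gives $S_A=S_B$.

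To verify condition \textbf{\emph{(ii)}}, fix $p\in M$, take the affine chart furnished by the hypothesis, and normalize it by a constant linear change so that $(g_0)_{ij}(p)=\delta_{ij}$; such a change is affine, so the requirement that the coordinate systems be affinely related is automatic since $\nabla$ is flat, and one still has $|\partial_k(g_0)_{ij}|(p)\le K$ and $|Q_{ijkl}|(p)\le K$. Since $g_0$ is Hessian, locally $(g_0)_{ij}=\partial_i\partial_j\varphi$, so $\partial_j(g_0)_{kl}=\partial_l(g_0)_{kj}$ and the mixed term $\partial_i\big(\partial_j(g_0)_{kl}-\partial_l(g_0)_{kj}\big)$ vanishes identically, hence is trivially bounded. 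For the second derivatives, recall the coordinate expression of the Hessian curvature tensor (see \cite{r11}): writing $\gamma_{ijk}=\tfrac12\partial_i(g_0)_{jk}$, the tensor $Q$ equals, up to reindexing, a first derivative of $\gamma$ minus a universal expression quadratic in the $\gamma$'s contracted with $g_0^{-1}$; since $g_0$ is Hessian the first-derivative term is $\tfrac12\partial_i\partial_j\partial_k\partial_l\varphi$, a totally symmetric tensor, so every component $\partial_l\partial_k(g_0)_{ij}$ can be solved for from $Q$ and $\gamma$. Evaluating at $p$, where $g_0(p)=\delta$ and $|\partial g_0|(p)\le K$, the quadratic term is $O(K^2)$, whence $|\partial_l\partial_k(g_0)_{ij}|(p)\le C(n)(K+K^2)$; together with the first-derivative bound this gives \textbf{\emph{(ii)}} with a uniform constant — indeed a two-sided bound on the second derivatives.

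Condition \textbf{\emph{(i)}} asks for a function $\rho$ uniformly equivalent to the distance from a fixed point with $|d\rho|_{g_0}$ and $|\nabla d\rho|_{g_0}$ bounded, and I would obtain it from Theorem \ref{l3.3}. The bound on the Hessian curvature controls the Riemannian curvature of $g_0$ through the standard identity expressing the latter as an antisymmetrization of $Q$ (see \cite{r11}), so $(M,g_0)$ is complete with bounded curvature and carries a regularized distance function $\rho$, uniformly equivalent to the distance from a fixed point, with $|d\rho|_{g_0}$ and $|\Hess_{g_0}\rho|_{g_0}$ bounded. The subtle point is that Theorem \ref{t1.2} needs the Hessian of $\rho$ taken with the flat connection $\nabla$, not with the Levi-Civita connection of $g_0$; but the two differ only by the difference tensor $\gamma$, so $|\nabla d\rho|_{g_0}\le|\Hess_{g_0}\rho|_{g_0}+|\gamma|_{g_0}\,|d\rho|_{g_0}$, and $|\gamma|_{g_0}$ is bounded by $C(n)K$ from the same data (at each point, in the normalizing affine chart, $|\gamma|_{g_0}^2=\tfrac14\sum(\partial_i(g_0)_{jk})^2$). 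Hence $|\nabla d\rho|_{g_0}$ is bounded, \textbf{\emph{(i)}} holds, and Theorem \ref{t1.2} delivers $S_A=S_B$. I expect the construction of a $\rho$ with controlled \emph{affine} Hessian to be the main obstacle, which is precisely what Theorem \ref{l3.3} is designed to handle — presumably by exploiting the flat affine structure in the smoothing step.
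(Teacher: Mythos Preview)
Your proposal is correct and follows essentially the same approach as the paper: verify conditions \textbf{(a1)} and \textbf{(a3)} of Theorem~\ref{t1.2} using Proposition~\ref{p2.4} (the Hessian curvature formula and its relation to the Riemannian curvature) together with Lemma~\ref{l3.3} (Shi's exhaustion function), then invoke Theorem~\ref{t1.2}. Your treatment is in fact more detailed than the paper's one-line proof --- particularly in spelling out how the bound on $Q$ yields the second-derivative bound in \textbf{(a3)} and in flagging the passage from the Levi--Civita Hessian of $\rho$ to the flat Hessian via the difference tensor $\gamma$, a point the paper leaves implicit.
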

A longstanding conjecture of Yau \cite{Yau89} states that
a complete noncompact K\"{a}hler manifold with positive holomorphic
bisectional curvature is biholomorphic to $\mathbb{C}^n$. It was proved by Liu \cite{Liu19} that a complete noncompact
K\"{a}hler manifold with nonnegative bisectional curvature and maximal volume growth is biholomorphic to
$\mathbb{C}^n$. The same result was proved by Lee-Tam \cite{r4} later using the Chern-Ricci flow.
In this paper, we prove similar conclusion for noncompact Hessian manifolds
as an application of Thereom \ref{thm2}:
\begin{theorem}
\label{uniformization}
Let $(M^n, g_0)$ be a complete noncompact connected Hessian manifold with nonnegative Hessian bisectional curvature
(see Definition \ref{bisectional}). Assume that
\begin{equation}
\label{MVG}
\mathrm{Vol}_{g^T} (B (p, r)) \geq C_0 r^{2n}, \ \forall r \in [0, \infty) \mbox{ for some } p \in TM
\end{equation}
and for some positive constant $C_0$. If for any point $p \in M^n$, there exists a local coordinates $\{x^1,\ldots ,x^n\}$ around $p$ such that $(g_0)_{ij}(p)=\delta_{ij}$, $|\partial_k(g_0)_{ij}(p)|\leq K$, the Hessian curvature $|Q_{ijkl}(p)|\leq K$ for some constant $K$ independent of $p$ and all the changes
of these local coordinate systems are affine. Then $M$ is diffeomorphic to $\mathbb{R}^n$.
\end{theorem}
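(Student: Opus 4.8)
The plan is to run the flow \eqref{e1.2} on $(M,g_0)$ and to exploit that, under the tangent-bundle construction of Section~2.2, it coincides with the K\"ahler--Ricci flow on $(TM,g^T)$ (the Hermitian metric $g^T$ being K\"ahler because $g_0$ is Hessian), that the nonnegativity of the Hessian bisectional curvature of $g_0$ is precisely the nonnegativity of the holomorphic bisectional curvature of $g^T$, and that \eqref{MVG} is precisely maximal volume growth of $(TM,g^T)$. The pointwise bounded-geometry hypothesis imposed on $g_0$ in Theorem \ref{uniformization} is the one required by Theorem \ref{thm2}, so $S_A=S_B$, and it remains to show $S_A=\infty$ and then to extract topological information from the resulting immortal flow.

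Second, I would prove $S_B=\infty$. This is where maximal volume growth enters: for every $S>0$ one must construct a bounded $u$ with $g_0-S\beta(g_0)+\nabla du\ge\theta g_0$. Transferring to $TM$, $\beta(g_0)$ becomes the Chern--Ricci form $\mathrm{Ric}(\omega_0)$, which is $\ge 0$ by nonnegative bisectional curvature, so the inequality reads $\omega_0-S\,\mathrm{Ric}(\omega_0)+\sqrt{-1}\,\partial\bar\partial u\ge\theta\omega_0$; following Lee--Tam \cite{r4}, such bounded potentials exist on a complete K\"ahler manifold with nonnegative bisectional curvature and maximal volume growth, the volume lower bound furnishing the uniform $C^0$ and gradient estimates needed to solve the associated Monge--Amp\`ere-type inequality on an exhaustion by domains and pass to the limit. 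Hence, by Theorem \ref{thm2}, \eqref{e1.2} admits an immortal solution $g(t)$, $t\in[0,\infty)$, uniformly equivalent to $g_0$ on each finite interval; moreover nonnegativity of the Hessian bisectional curvature is preserved along $g(t)$ (equivalently, K\"ahler--Ricci flow preserves nonnegative holomorphic bisectional curvature), and the curvature of $g(t)$ enjoys the usual $1/t$ decay.

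Third, I would carry out the blow-down analysis of Liu \cite{Liu19} and Lee--Tam \cite{r4}: rescale by $g_i(t)=\lambda_i^{-1}g(\lambda_i t)$ with $\lambda_i\to\infty$. The bound \eqref{MVG} and the noncollapsing it gives survive the rescaling, so the rescaled K\"ahler--Ricci flows on $TM$ subconverge in the pointed Cheeger--Gromov sense to a complete immortal flow whose time-zero limit is, by Cheeger--Colding theory, a metric cone; being K\"ahler with nonnegative bisectional curvature and smooth off the vertex, this cone is $\mathbb{C}^n$ with the flat metric. Correspondingly the rescaled Hesse--Koszul flows on $M$ converge to the flat Hessian structure on $\mathbb{R}^n$, and Lee--Tam's argument applied to $(TM,g^T)$ even gives that $(TM,g^T)$ is biholomorphic to $\mathbb{C}^n$, so $TM$ is diffeomorphic to $\mathbb{R}^{2n}$.

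Finally, I would upgrade this to a diffeomorphism $M\cong\mathbb{R}^n$. The soft statement that $M$, being a deformation retract of $TM\cong\mathbb{R}^{2n}$, is contractible is by itself insufficient (for instance the Whitehead manifold $W$ has $TW\cong W\times\mathbb{R}^3\cong\mathbb{R}^6$ while $W\not\cong\mathbb{R}^3$), so the flow must be used genuinely. The point is a pseudolocality estimate for \eqref{e1.2}: once a large metric ball of $(M,g(t))$ is Gromov--Hausdorff close to a Euclidean ball, its center has curvature bounds for a definite further time; combined with the smooth convergence of the rescaled flow to $\mathbb{R}^n$, this forces, for $t$ large, the diffeomorphisms realizing Cheeger--Gromov convergence to be defined on an exhaustion of $M$ and to patch into a global diffeomorphism $M\to\mathbb{R}^n$. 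I expect the main obstacle to be exactly this last step: the curvature/volume control is most naturally available on $TM$, but the projection $TM\to M$ has noncompact fibers and so does not carry a compact exhaustion of $M$ to one of $TM$, so one must run the pseudolocality argument directly for the Hesse--Koszul flow on $M$ rather than borrowing it from K\"ahler--Ricci flow on $TM$, which is where the real work lies.
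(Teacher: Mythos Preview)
Your route is genuinely different from the paper's, and the step you yourself flag as ``where the real work lies'' is exactly where your argument is incomplete. The paper never invokes blow-down limits, tangent cones, Cheeger--Colding theory, or pseudolocality. Instead it transplants the Chau--Tam iteration scheme \cite{Tam06} from $TM$ to $M$. The observation that makes this possible is that the zero section $M\hookrightarrow (TM,g^T(t))$ is totally geodesic, so the exponential map of $g^T(t)$ at $(p,0)\in TM$ restricts to the exponential map of $g(t)$ at $p\in M$; the uniform injectivity-radius lower bound for the normalized K\"ahler--Ricci flow on $TM$ (available from \cite{Tam06} under maximal volume growth) therefore yields a uniform injectivity radius for the normalized flow $g(t)$ on $M$ itself. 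This produces charts $\Phi_t:\widehat{B}_0(r_1)\subset\mathbb{R}^n\to M$ whose transition maps $F_{i+1}=\Phi_{(i+1)T}^{-1}\circ\Phi_{iT}$ are uniformly contracting self-maps of a fixed real ball, and whose images exhaust $M$. One then runs the Rosay--Rudin/Chau--Tam machinery with \emph{real} polynomial maps in place of holomorphic ones: analyze the limits of the eigenvalues of $\beta_{ij}(p,t)$, decompose $T_pM$ accordingly, normalize the $F_i$ by lower-triangular polynomial diffeomorphisms $G_i$ and $T_i$, and show that $\lim_{i\to\infty} G_1^{-1}\circ\cdots\circ G_i^{-1}\circ T_i\circ\Phi_{iT}^{-1}$ exists and is a nondegenerate surjection $M\to\mathbb{R}^n$. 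Because the construction lives on $M$ from the outset, the Whitehead-type obstruction you worry about simply never enters.

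By contrast, your proposal has two gaps. First, the bounded potentials you want from Lee--Tam are constructed on $TM$, not on $M$; to feed into condition \textbf{(a2)} you need $u:M\to\mathbb{R}$, i.e., a fiber-invariant function on $TM$, and nothing in the Lee--Tam construction forces this. Second, and more seriously, no pseudolocality theorem is available for the flow \eqref{e1.2}: Perelman-type pseudolocality rests on entropy or reduced-volume monotonicity, and the Hesse--Koszul flow has no such monotone quantity in the literature, while borrowing pseudolocality from the K\"ahler--Ricci flow on $TM$ runs into exactly the noncompact-fiber problem you already identified. The paper's approach sidesteps both issues by never leaving $M$.
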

In view of Theorem \ref{uniformization}, the current work can be also regarded as an attempt to understand real manifolds from their tangent bundles.
We remark that the tangent bundle of a smooth manifold (which is not necessary affine) being diffeomorphic to $\mathbb{R}^{2n}$
does not imply that the manifold is diffeomorphic to $\mathbb{R}^n$. A classical 3-dimensional counterexample is Whitehead's manifold $W$.
The tangent bundle of $W$ is diffeomorphic to $\mathbb{R}^6$, however, $W$ is topologically distinct from $\mathbb{R}^3$
(c.f. \cite{Rolfsen76}, Page 82).

In general, given a linear connection $\nabla$ on $M$, it was shown in \cite{Dombrowski62} that the tangent bundle $TM$ admits an almost complex
structure $J$ determined by the connection $\nabla$ and that $J$ is integrable if and only if $\nabla$ is flat. It may be of interest to
investigate geometrical properties of a real manifold via its tangent bundle.



The rest of this paper is organized as follows. In Section 2, we provide some preliminaries which may be used later.
In Section 3, the short time existence is considered. We establish the \emph{a priori} estimates up to $C^2$
for the flow \eqref{e1.2} in Section 4.
Theorem \ref{t1.2} is proved in Section 5. Section 6 is devoted to prove Theorem \ref{uniformization}.

\textbf{Acknowledgement.} The first author wish to thank Professor Ben Weinkove for suggesting him to read \cite{r13}
and to study the geometric flow on affine/Hessian manifolds while he was visiting the Department
of Mathematics at Northwestern University.

\section{Preliminaries}

In this section, we provide some basic results and preliminaries which may be used in the following sections.

First as the Chern-Ricci flow, \eqref{e1.2} can be rewritten as a scalar equation, which is a kind of parabolic Monge-Amp\`{e}re equations:
\begin{equation}
\label{e1.3}
\left\{ \begin{aligned}
   &\frac{\partial \varphi }{\partial t}=\log \frac{\det ({{g}_{0}}-t\beta ({{g}_{0}})+\nabla d\varphi )}{\det ({{g}_{0}})},  \\
   &{{g}_{0}}-t\beta ({{g}_{0}})+\nabla d\varphi >0,  \\
   &\varphi (0)=0.  \\
\end{aligned} \right.
\end{equation}
where $\varphi$ is an unknown function.
Indeed, given $\varphi (t)$ solving \eqref{e1.3}, we see $g(t):={g}_{0}-t\beta ({{g}_{0}})+\nabla d\varphi$ satisfies
\[\frac{\partial }{\partial t}g(t)=-\beta ({{g}_{0}})+\nabla d\log \frac{\det (g(t))}{\det ({{g}_{0}})}=-\beta ({{g}_{t}})\]
and $g(0)=g_0$. It follows that $g(t)$ is a solution to \eqref{e1.2}.

Conversely, if $g(t)$ is a solution to \eqref{e1.2}, we define $\varphi(x,t)$ by
\[\varphi (x,t)=\int_{0}^{t}{\log \frac{\det g(x,s)}{\det {{g}_{0}}(x)}}ds.\]
We find
\[\frac{\partial \varphi }{\partial t}=\log \frac{\det g(x,s)}{\det {{g}_{0}}(x)},\ \ \varphi (x,0)=0\]
By direct calculations, we obtain
\[\frac{\partial}{\partial t}(g(t)-g_0+t\beta(g_0)-\nabla d\varphi)=0\]
and
\[(g(t)-g_0+t\beta(g_0)-\nabla d\varphi)|_{t=0}=0.\]
Therefore, $g(t)=g_0-t\beta(g_0)+\nabla d\varphi>0$ for all $t\in [0,T)$ as required.

Now we provide some basic definitions and results on affine and Hessian manifolds. Most of them can be found in \cite{r11}.


\begin{definition} An affine manifold $(M,\nabla)$ is a differentiable manifold endowed with a flat connection $\nabla$.
\end{definition}
The following proposition can be derived easily from the definition of affine manifolds.
\begin{proposition}
\label{prop1}
\

\emph{(1)}
\begin{minipage}[t]{0.92\linewidth}
Suppose that $M$ admits a flat connection $\nabla$. Then there exist local coordinate systems on $M$ such that $\nabla_{\partial /\partial x^i} \partial /\partial x^j=0$. The changes between such coordinate systems are affine transformations.
\end{minipage}

\emph{(2)}
\begin{minipage}[t]{0.92\linewidth}
Conversely, if $M$ admits local coordinate systems such that the changes of the local coordinate systems are affine transformations, then there exists a flat connection $\nabla$ satisfying $\nabla_{\partial /\partial x^i} \partial /\partial x^j=0$ for all such local coordinate systems.
\end{minipage}
\end{proposition}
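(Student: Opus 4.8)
The plan is to establish the two implications separately, each reducing to standard facts about flat connections together with the transformation law for Christoffel symbols; both parts amount to the routine verification the statement promises.

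For part (1), I would start from the classical fact that a flat, torsion-free connection admits local parallel frames: since the curvature $R$ vanishes, the horizontal distribution on the frame bundle is integrable, so by the Frobenius theorem every point of $M$ has a neighborhood on which there exist pointwise linearly independent vector fields $e_1,\dots,e_n$ with $\nabla e_i = 0$. Torsion-freeness then gives $[e_i,e_j] = \nabla_{e_i}e_j - \nabla_{e_j}e_i = 0$, so $\{e_i\}$ is a commuting frame; by the straightening theorem for commuting vector fields there are local coordinates $x^1,\dots,x^n$ with $\partial/\partial x^i = e_i$, and then $\nabla_{\partial/\partial x^i}\partial/\partial x^j = \nabla_{e_i}e_j = 0$, i.e. the Christoffel symbols vanish. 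If $(x^i)$ and $(y^a)$ are two such coordinate systems, then writing $\partial/\partial x^i = \sum_a (\partial y^a/\partial x^i)\,\partial/\partial y^a$ and applying $\nabla_{\partial/\partial x^j}$ --- which annihilates $\partial/\partial x^i$ as well as every $\partial/\partial y^a$ --- leaves $\sum_a (\partial^2 y^a/\partial x^i\partial x^j)\,\partial/\partial y^a = 0$, hence $\partial^2 y^a/\partial x^i\partial x^j \equiv 0$ and the transition $y = y(x)$ is affine.

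For part (2), I would build $\nabla$ chart by chart. In each of the given coordinate charts $(x^i)$ declare $\nabla_{\partial/\partial x^i}\partial/\partial x^j := 0$ and extend by the usual rules, i.e. $\nabla_X Y = \sum_i X(Y^i)\,\partial/\partial x^i$ for $Y = \sum_i Y^i\,\partial/\partial x^i$. To see that these local definitions patch to a single global connection, recall the transformation rule $\widetilde\Gamma^{c}_{ab} = \frac{\partial y^c}{\partial x^k}\frac{\partial x^i}{\partial y^a}\frac{\partial x^j}{\partial y^b}\Gamma^k_{ij} + \frac{\partial y^c}{\partial x^k}\frac{\partial^2 x^k}{\partial y^a\partial y^b}$: since any transition map of the atlas is affine, so is its inverse, whence $\partial^2 x^k/\partial y^a\partial y^b \equiv 0$, and together with $\Gamma^k_{ij} = 0$ this forces $\widetilde\Gamma^c_{ab} = 0$. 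Hence the connections defined in overlapping charts agree, producing a globally defined $\nabla$ whose Christoffel symbols vanish in every chart of the atlas; in particular $\nabla$ is torsion-free and has $R \equiv 0$, so it is flat.

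The only genuinely non-formal ingredients are the two classical facts used in part (1) --- that vanishing curvature produces local parallel frames (Frobenius integrability on the frame bundle) and that a commuting frame is a coordinate frame --- everything else is bookkeeping with the transformation laws. I therefore expect the mild obstacle to be the clean assembly of part (1), while part (2) is a direct computation with the Christoffel transformation law.
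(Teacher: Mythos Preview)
Your argument is correct and is the standard one. Note that the paper does not actually supply a proof of this proposition: it is stated immediately after the definition of an affine manifold with the remark that it ``can be derived easily from the definition,'' and the reader is referred implicitly to \cite{r11}. So there is nothing to compare your approach against; you have simply filled in what the paper leaves as an exercise. One small point worth making explicit: your proof of part~(1) uses that $\nabla$ is torsion-free (to pass from a parallel frame to a commuting, hence coordinate, frame), and indeed the conclusion $\nabla_{\partial/\partial x^i}\partial/\partial x^j=0$ forces the torsion to vanish; the paper's convention, stated in the introduction, is that $\nabla$ is always both flat and torsion-free, so this is consistent with the intended hypothesis even though the proposition itself omits it.
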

Let $(M, \nabla, g)$ be an affine manifold with a Riemannian metric $g$ and denote $\hat{\nabla}$ the Levi-Civita connection of
the Riemannian manifold $(M,g)$. Let $\gamma=\hat{\nabla}-\nabla$. Since $\nabla$ and $\hat{\nabla}$ are both torsion-free, we have
\[\gamma_X Y=\gamma_Y X\]
Moreover, the components $\gamma_{\;jk}^i$ of $\gamma$ with respect to affine coordinate systems coincide with the Christoffel symbols $\Gamma_{\;jk}^i$ of the Levi-Civita connection $\hat{\nabla}$.

The tensor $Q=\nabla \gamma$ is called the Hessian curvature tensor for $(g,\nabla)$.
A Riemannian metric $g$ on a affine manifold $(M,\nabla)$ is called Hessian if $g$ can be locally expressed by $g=\nabla d\varphi$. An affine manifold endowed with a Hessian metric is called a Hessian manifold.

For a flat connection $\nabla$, a local coordinate system $\{x^1,\ldots,x^n\}$ satisfying $\nabla_{\frac{\partial}{\partial x^i}} \frac{\partial}{\partial x^j}=0$ is called an affine coordinate system with respect to $\nabla$.
Let $(M,\nabla,g)$ be a Hessian manifold and $g$ can be locally expressed by
\[g_{ij}=\frac{\partial^2 \phi}{\partial x^i \partial x^j}\]
where $\{x^1,\ldots,x^n\}$ is an affine coordinate system with respect to $\nabla$. The following two propositions can be found in \cite{r11}.

\begin{proposition}
Let $(M,\nabla)$ be an affine manifold and $g$ a Riemannian metric on $M$. Then the following are equivalent:

\emph{(1)} $g$ is a Hessian metric

\emph{(2)} $(\nabla_X g)(Y,Z)=(\nabla_Y g)(X,Z)$

\emph{(3)} $\dfrac{\partial g_{ij}}{\partial x^k}=\dfrac{\partial g_{kj}}{\partial x^i}$

\emph{(4)} $g(\gamma_X Y,Z)=g(Y,\gamma_X Z)$

\emph{(5)} $\gamma_{ijk}=\gamma_{jik}$

\end{proposition}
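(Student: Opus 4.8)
The plan is to prove the cycle $(1)\Rightarrow(3)\Rightarrow(1)$ together with the equivalences $(2)\Leftrightarrow(3)$, $(3)\Leftrightarrow(5)$ and $(4)\Leftrightarrow(5)$. All of these can be checked locally in an affine coordinate system $\{x^1,\ldots,x^n\}$ with respect to $\nabla$: condition $(1)$ is local by definition, while $(2)$ and $(4)$ are tensorial, so it suffices to verify them on the coordinate vector fields $\partial_i$. The only facts I will need are that the connection coefficients of $\nabla$ vanish in such coordinates, so that $(\nabla d\varphi)_{ij}=\partial_i\partial_j\varphi$ and $(\nabla_{\partial_k}g)(\partial_i,\partial_j)=\partial_k g_{ij}$, together with $\gamma^i_{\;jk}=\Gamma^i_{\;jk}$, the Christoffel formula $\gamma_{ijk}:=g_{il}\gamma^l_{\;jk}=\tfrac12(\partial_j g_{ik}+\partial_k g_{ij}-\partial_i g_{jk})$, and the symmetry $\gamma_{ijk}=\gamma_{ikj}$ coming from $\gamma_X Y=\gamma_Y X$ recorded above.

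The routine implications go as follows. For $(1)\Rightarrow(3)$: if $g=\nabla d\varphi$ locally then $g_{ij}=\partial_i\partial_j\varphi$, so $\partial_k g_{ij}=\partial_i\partial_j\partial_k\varphi$ is symmetric in all indices, giving $(3)$. The equivalence $(2)\Leftrightarrow(3)$ is immediate from $(\nabla_{\partial_k}g)(\partial_i,\partial_j)=\partial_k g_{ij}$. For $(3)\Leftrightarrow(5)$, the Christoffel formula yields $\gamma_{ijk}-\gamma_{jik}=\partial_j g_{ik}-\partial_i g_{jk}$, which vanishes for all $i,j,k$ exactly when $(3)$ holds. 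Finally, for $(4)\Leftrightarrow(5)$: evaluating $(4)$ on $X=\partial_a$, $Y=\partial_b$, $Z=\partial_c$ gives $g(\gamma_{\partial_a}\partial_b,\partial_c)=g(\gamma_{\partial_a}\partial_c,\partial_b)$, i.e. $\gamma_{cab}=\gamma_{bac}$ for all $a,b,c$; using $\gamma_{cab}=\gamma_{cba}$ and $\gamma_{bac}=\gamma_{bca}$, this becomes $\gamma_{cba}=\gamma_{bca}$ for all $a,b,c$, which is precisely the symmetry in the first two indices asserted in $(5)$.

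The one step requiring an actual construction --- and the place where I expect any real work to lie --- is $(3)\Rightarrow(1)$. Fix $p\in M$ and work on a contractible coordinate neighbourhood of $p$. Condition $(3)$ says that for each fixed $j$ the $1$-form $\omega_j:=\sum_i g_{ij}\,dx^i$ is closed, so by the Poincar\'e lemma there is a smooth function $\psi_j$ with $\partial_i\psi_j=g_{ij}$. Since $g$ is symmetric, $\partial_i\psi_j=g_{ij}=g_{ji}=\partial_j\psi_i$, so the $1$-form $\sum_j\psi_j\,dx^j$ is itself closed; applying the Poincar\'e lemma once more gives $\varphi$ with $\partial_j\varphi=\psi_j$, hence $g_{ij}=\partial_i\partial_j\varphi$, that is, $g=\nabla d\varphi$ near $p$. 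This completes the cycle and establishes the equivalence of $(1)$--$(5)$.
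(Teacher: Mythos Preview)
Your proof is correct and complete. The paper does not actually prove this proposition; it merely states that it ``can be found in \cite{r11}'' (Shima's book), so there is no argument to compare against. Your two applications of the Poincar\'e lemma for $(3)\Rightarrow(1)$ and the index bookkeeping for the remaining equivalences are exactly the standard route.
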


\begin{proposition}\label{p2.4}
 Let $\hat{R}$ be the Riemannian curvature of a Hessian metric $g=\nabla d\phi$ and $Q=\nabla \gamma$ be the Hessian curvature tensor for $(g,\nabla)$. Then

\emph{(1)} $Q_{ijkl}=\dfrac{1}{2}\dfrac{\partial^4 \phi}{\partial x^i \partial x^j \partial x^k \partial x^l}-\dfrac{1}{2}g^{pq}\dfrac{\partial^3 \phi}{\partial x^i \partial x^k \partial x^p}\dfrac{\partial^3 \phi}{\partial x^j \partial x^l \partial x^q}$

\emph{(2)} $\hat{R}(X,Y)=-[\gamma_X,\gamma_Y],\;\;\;\hat{R}^i_{\;jkl}=\gamma^i_{\;lm}\gamma^m_{\;jk}-\gamma^i_{\;km}\gamma^m_{\;jl}.$

\emph{(3)}
$\hat{R}_{ijkl}=\dfrac{1}{2}(Q_{ijkl}-Q_{jikl})$

\end{proposition}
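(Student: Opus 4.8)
The plan is to carry out every computation in a local affine coordinate system $\{x^1,\dots,x^n\}$ for $\nabla$ (in which the coefficients of $\nabla$ vanish), writing $\phi_{i_1\cdots i_r}:=\partial_{i_1}\cdots\partial_{i_r}\phi$ and using repeatedly that, since $g_{ij}=\phi_{ij}$ is Hessian, the derivatives $\phi_{ijk}$ and $\phi_{ijkl}$ are totally symmetric in their indices. Two elementary observations come first. Because $\partial_k g_{ij}=\phi_{ijk}$ is totally symmetric, the usual formula for the Levi-Civita Christoffel symbols collapses to $\gamma^i_{jk}=\Gamma^i_{jk}=\tfrac12 g^{il}\phi_{jkl}$, equivalently $\gamma_{ijk}=\tfrac12\phi_{ijk}$ (this also reproves the total symmetry of $\gamma_{ijk}$). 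And since the coefficients of $\nabla$ vanish in affine coordinates, covariant differentiation with respect to $\nabla$ acts on the components of any tensor by ordinary partial differentiation; hence $Q$ is obtained by partial-differentiating the $(1,2)$-tensor $\gamma^i_{jk}$ and lowering the resulting upper index with $g$.

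For (1), I would differentiate $\gamma^i_{jk}=\tfrac12 g^{il}\phi_{jkl}$, using $\partial_m g^{il}=-g^{ia}g^{lb}\phi_{abm}$, and then contract the free upper index with $g$: the term involving the fourth derivative is $\tfrac12\phi_{ijkl}$, and the term coming from $\partial(g^{-1})$ becomes $-\tfrac12 g^{pq}\phi_{ikp}\phi_{jlq}$ after using the total symmetry of $\phi_{abc}$ and relabeling summation indices. This is a two-line computation.

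Part (2) is the heart of the proof. Writing $\hat\nabla=\nabla+\gamma$ and computing $\hat R(\partial_k,\partial_l)\partial_j$ directly with $\nabla_{\partial_i}\partial_j=0$ and $[\partial_k,\partial_l]=0$ yields
\[
\hat R^{i}_{jkl}=\partial_k\gamma^{i}_{lj}-\partial_l\gamma^{i}_{kj}+\gamma^{i}_{km}\gamma^{m}_{lj}-\gamma^{i}_{lm}\gamma^{m}_{kj}.
\]
The Hessian hypothesis enters through the Codazzi-type identity
\[
\partial_k\gamma^{i}_{lj}-\partial_l\gamma^{i}_{kj}=2\bigl(\gamma^{i}_{lm}\gamma^{m}_{kj}-\gamma^{i}_{km}\gamma^{m}_{lj}\bigr),
\]
which I would prove by substituting $\gamma^{i}_{lj}=\tfrac12 g^{im}\phi_{ljm}$: the fourth-order terms are $\phi_{ljmk}$ and $\phi_{kjml}$ and cancel by total symmetry of $\phi$, while the remaining terms (produced by $\partial(g^{-1})$) reproduce the right-hand side after relabeling the summed indices. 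Equivalently this says $(\nabla_X\gamma)_Y-(\nabla_Y\gamma)_X=-2[\gamma_X,\gamma_Y]$, which together with $R^{\nabla}=0$ and the general formula $\hat R(X,Y)=R^{\nabla}(X,Y)+(\nabla_X\gamma)_Y-(\nabla_Y\gamma)_X+[\gamma_X,\gamma_Y]$ for the curvature of a tensorial perturbation of a torsion-free connection gives $\hat R(X,Y)=-[\gamma_X,\gamma_Y]$. Substituting the identity into the displayed expression collapses it to $\hat R^{i}_{jkl}=\gamma^{i}_{lm}\gamma^{m}_{jk}-\gamma^{i}_{km}\gamma^{m}_{jl}$, which is precisely the component form of $\hat R(X,Y)=-[\gamma_X,\gamma_Y]$.

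Part (3) is then bookkeeping combining (1) and (2): lowering the index in (2) and using $\gamma_{abc}=\tfrac12\phi_{abc}$ gives $\hat R_{ijkl}=\tfrac14 g^{pq}(\phi_{ilp}\phi_{jkq}-\phi_{ikp}\phi_{jlq})$, while antisymmetrizing (1) in $i\leftrightarrow j$ (the $\phi_{ijkl}$-terms cancel) gives $\tfrac12(Q_{ijkl}-Q_{jikl})=\tfrac14 g^{pq}(\phi_{jkp}\phi_{ilq}-\phi_{ikp}\phi_{jlq})$; the two agree because $g^{pq}\phi_{jkp}\phi_{ilq}=g^{pq}\phi_{ilp}\phi_{jkq}$. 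I expect the only genuine obstacle to be the Codazzi identity in step (2) — correctly tracking which fourth derivatives cancel and matching up the $\partial(g^{-1})$-terms — together with the need to fix, once and for all, the placement of the $\nabla$-derivative index in the definition $Q=\nabla\gamma$ so that $Q_{ijkl}$ carries exactly the symmetry used in step (3); everything else is routine.
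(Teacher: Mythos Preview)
Your proof is correct. The paper does not actually prove Proposition~\ref{p2.4}; it is stated with the remark ``The following two propositions can be found in \cite{r11}'' (Shima's book), so there is no proof in the paper to compare against beyond the implicit reference. Your direct computation in affine coordinates---reducing $\gamma^i_{jk}$ to $\tfrac12 g^{il}\phi_{jkl}$ via the total symmetry of $\phi_{ijk}$, then verifying the Codazzi-type identity $\partial_k\gamma^i_{lj}-\partial_l\gamma^i_{kj}=2(\gamma^i_{lm}\gamma^m_{kj}-\gamma^i_{km}\gamma^m_{lj})$ by hand---is precisely the standard argument one finds in \cite{r11}, and all the index bookkeeping checks out. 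The one point you flag as a potential obstacle, namely the placement of the $\nabla$-derivative index in $Q=\nabla\gamma$, is resolved consistently by your choice $Q_{ijkl}=g_{im}\partial_k\gamma^m_{jl}$, which does produce the formula in (1) and the antisymmetry needed for (3).
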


\begin{definition}
\label{Kos}
Let $(M,\nabla,g)$ be a Hessian manifold and
$v$ the volume element of $g$. The first Koszul form $\alpha$ and the second Koszul form $\kappa$ for $(\nabla,g)$ are defined by
\[\nabla_X v=\alpha(X)v\]
\[\kappa=\nabla \alpha\]
\end{definition}
It follows that
\[\alpha(X)={\rm Tr} {\gamma_X}\]
and
\[\alpha_i=\frac{1}{2}\frac{\partial \log \det[g_{pq}]}{\partial x^i}=\gamma_{\;ki}^k\]
\[\kappa_{ij}=\frac{\partial \alpha_i}{\partial x^j}=\frac{1}{2}\frac{\partial \log \det[g_{pq}]}{\partial x^i \partial x^j}\]
locally.


Now we introduce the relationship between affine structures and complex structures.
Let $(M,\nabla)$ be a flat manifold and $TM$ be the tangent bundle of $M$ with projection $\pi:TM\rightarrow M$. For an affine coordinate system $\{x^1,\ldots,x^n\}$ on $M$, we set
\begin{equation}
z^j=\xi^j+\sqrt{-1}\xi^{n+j}
\end{equation}
where $\xi^i=x^i\circ \pi$ and $\xi^{n+j}=dx^i$. Then $n$-tuples of functions given by $\{z^1,\ldots,z^n\}$ yield holomorphic coordinate systems on $TM$. We denote by $J_\nabla$ the complex structure tensor of the complex manifold $TM$. For a Riemannian metric $g$ on $M$ we put
\begin{equation}\label{e2.2}
g^T=\sum_{i,j=1}^{n}(g_{ij}\circ \pi)dz^id\overline{z}^j.
\end{equation}
Then $g^T$ is a Hermitian metric on the complex manifold $(TM,J_\nabla)$.
\begin{proposition} (\cite{r11}) \label{p2.6}
Let $(M,\nabla)$ be a flat manifold and $g$ a Riemannian metric on $M$. Then the following conditions are equivalent.

\noindent\emph{(1)} $g$ is a Hessian metric on $(M,\nabla)$.

\noindent\emph{(2)} $g^T$ is a K{\"a}hler metric on $(TM,J_\nabla)$.

\end{proposition}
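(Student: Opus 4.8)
**Proof proposal for Proposition \ref{p2.6} (equivalence of $g$ Hessian on $(M,\nabla)$ and $g^T$ Kähler on $(TM,J_\nabla)$).**

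The plan is to compute the fundamental two-form $\omega^T$ of the Hermitian metric $g^T$ in the holomorphic coordinates $\{z^1,\ldots,z^n\}$ on $TM$ coming from a fixed affine coordinate system $\{x^1,\ldots,x^n\}$ on $M$, and then to translate the closedness condition $d\omega^T=0$ into a differential condition on the coefficients $g_{ij}$ on $M$. Since by construction $g^T=\sum_{i,j}(g_{ij}\circ\pi)\,dz^i\,d\overline z^j$, the associated $(1,1)$-form is
\[
\omega^T=\frac{\sqrt{-1}}{2}\sum_{i,j=1}^n (g_{ij}\circ\pi)\,dz^i\wedge d\overline z^j .
\]
The key observation is that the coefficients $g_{ij}\circ\pi$ depend only on the base variables $\xi^1,\ldots,\xi^n$ (equivalently on $x^1,\ldots,x^n$), i.e. they are independent of the fibre variables $\xi^{n+1},\ldots,\xi^{2n}$, so that $\partial(g_{ij}\circ\pi)/\partial z^k=\tfrac12\,(\partial g_{ij}/\partial x^k)\circ\pi$ and $\partial(g_{ij}\circ\pi)/\partial\overline z^k=\tfrac12\,(\partial g_{ij}/\partial x^k)\circ\pi$ as well. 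Computing $d\omega^T=(\partial+\overline\partial)\omega^T$ and collecting the $(2,1)$ and $(1,2)$ parts, one finds that $d\omega^T=0$ is equivalent to
\[
\frac{\partial g_{ij}}{\partial x^k}=\frac{\partial g_{kj}}{\partial x^i}\quad\text{for all }i,j,k,
\]
which is precisely condition (3) of Proposition \ref{p2.4}'s companion statement characterising Hessian metrics. Thus (1) $\Leftrightarrow$ (2).

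First I would record the elementary but essential fact that the holomorphic coordinate change on $TM$ induced by an affine change $x\mapsto Ax+b$ on $M$ is the $\mathbb{C}$-linear map $z\mapsto Az$ (because the fibre coordinates $\xi^{n+j}=dx^j$ transform linearly by the same matrix $A$), so that $J_\nabla$ is a well-defined integrable complex structure and the expression \eqref{e2.2} for $g^T$ is independent of the choice of affine chart; this is what makes the local computation above globally meaningful. Next I would carry out the exterior derivative computation in a single affine chart: write $\omega^T=\tfrac{\sqrt{-1}}{2}\sum g_{ij}\,dz^i\wedge d\overline z^j$ with $g_{ij}$ viewed as functions of the $z$'s and $\overline z$'s through $\pi$, differentiate, and use $\partial g_{ij}/\partial z^k=\partial g_{ij}/\partial\overline z^k$ (both equal to $\tfrac12\partial_{x^k}g_{ij}$ pulled back). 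The $(2,1)$-component of $d\omega^T$ is then $\tfrac{\sqrt{-1}}{2}\sum_{i<k}\big(\partial_{z^i}g_{kj}-\partial_{z^k}g_{ij}\big)dz^i\wedge dz^k\wedge d\overline z^j$ up to reindexing, and its vanishing is equivalent to $\partial_{x^i}g_{kj}=\partial_{x^k}g_{ij}$; the $(1,2)$-part gives the conjugate condition, which is the same. Finally I would invoke the cited proposition that this symmetry in the first pair of indices is one of the equivalent characterisations of $g$ being Hessian, completing the proof.

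The main obstacle is purely bookkeeping: one must be careful that the coefficients $g_{ij}\circ\pi$ genuinely have no dependence on the fibre directions $\xi^{n+j}$ — this is what kills all the potentially problematic terms in $d\omega^T$ (in particular there is no $dz^i\wedge dz^j$ or $d\overline z^i\wedge d\overline z^j$ contribution, so $g^T$ is automatically of type $(1,1)$ and a Hermitian form, and the only obstruction to closedness is the base-derivative symmetry). A secondary point worth stating cleanly is the factor-of-$\tfrac12$ arising from $\partial/\partial z^k=\tfrac12(\partial/\partial\xi^k-\sqrt{-1}\,\partial/\partial\xi^{n+k})$ together with $\xi^k=x^k\circ\pi$; it cancels uniformly and does not affect the equivalence. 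No hard analysis is involved — the content is the dictionary between the affine-flat data on $M$ and the complex-Hermitian data on $TM$, and the computation above makes that dictionary explicit.
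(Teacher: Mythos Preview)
Your argument is correct: the computation of $d\omega^T$ in the holomorphic coordinates $z^j=\xi^j+\sqrt{-1}\xi^{n+j}$, together with the observation that $g_{ij}\circ\pi$ depends only on the base variables, reduces closedness of $\omega^T$ precisely to the symmetry $\partial g_{ij}/\partial x^k=\partial g_{kj}/\partial x^i$, which is one of the equivalent characterisations of a Hessian metric stated just before Proposition~\ref{p2.4}. Note that the paper itself does not supply a proof of this proposition --- it is quoted from Shima~\cite{r11} --- and your computation is exactly the standard argument found there, so there is nothing further to compare.
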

Furthermore, we have
\begin{proposition} (\cite{r11}) \label{p2.7}
Let $(M, \nabla, g)$ be a Hessian manifold and $R^T$ be the Riemannian curvature tensor of the K{\"a}hler manifold $(TM,J,g^T)$. Then we have
\begin{equation}
\label{sec}
R_{i\bar{j}k\bar{l}}^T=-\frac{1}{2}Q_{ijkl}\circ \pi.
\end{equation}
Let $R_{i\bar{j}}^T$ be the Ricci tensor of the K{\"a}hler manifold $(TM,J,g^T)$. Then we have
\[R_{i\bar{j}}^T=\frac{1}{4}\beta_{ij}\circ \pi.\]
\end{proposition}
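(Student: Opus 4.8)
The plan is to carry everything out in local coordinates, reducing the two identities to the standard coordinate formulas for the curvature and Ricci curvature of a K\"ahler metric. Fix a point of $M$, choose an affine coordinate system $\{x^1,\ldots,x^n\}$ near it in which $g_{ij}=\partial_i\partial_j\phi$ (available since $g$ is Hessian), and pass to the induced holomorphic coordinates $\{z^1,\ldots,z^n\}$ on $TM$ with $z^j=\xi^j+\sqrt{-1}\,\xi^{n+j}$, $\xi^i=x^i\circ\pi$. The one elementary fact needed is that a function of the form $f\circ\pi$ depends only on $\xi^1=\mathrm{Re}\,z^1,\ldots,\xi^n=\mathrm{Re}\,z^n$, so that
\[
\frac{\partial}{\partial z^i}(f\circ\pi)=\frac{\partial}{\partial\bar z^i}(f\circ\pi)=\tfrac12(\partial_i f)\circ\pi,\qquad
\frac{\partial^2}{\partial z^i\,\partial\bar z^j}(f\circ\pi)=\tfrac14(\partial_i\partial_j f)\circ\pi .
\]
By Proposition \ref{p2.6} the metric $g^T$ is K\"ahler, and by \eqref{e2.2} its components are $g^T_{k\bar l}=g_{kl}\circ\pi$, with inverse $g^{T\,p\bar q}=g^{pq}\circ\pi$ (where $g^{pq}$ is the inverse of $g_{ij}$ on $M$).

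For \eqref{sec} I would substitute these into the K\"ahler curvature formula
\[
R^T_{i\bar j k\bar l}=-\frac{\partial^2 g^T_{k\bar l}}{\partial z^i\,\partial\bar z^j}+g^{T\,p\bar q}\,\frac{\partial g^T_{k\bar q}}{\partial z^i}\,\frac{\partial g^T_{p\bar l}}{\partial\bar z^j},
\]
apply the two displayed identities above, and use $\partial_i g_{kl}=\partial_i\partial_k\partial_l\phi$, obtaining
\[
R^T_{i\bar j k\bar l}=-\tfrac14\bigl(\partial_i\partial_j\partial_k\partial_l\phi\bigr)\circ\pi+\tfrac14\bigl(g^{pq}(\partial_i\partial_k\partial_p\phi)(\partial_j\partial_l\partial_q\phi)\bigr)\circ\pi .
\]
Comparing with the expression for $Q_{ijkl}$ in Proposition \ref{p2.4}(1) — after swapping the dummy indices $p\leftrightarrow q$ in the quadratic term and using $g_{pq}=g_{qp}$ to align the factors — the right-hand side is exactly $-\tfrac12 Q_{ijkl}\circ\pi$, which is \eqref{sec}.

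For the Ricci identity I would invoke the standard K\"ahler formula $R^T_{i\bar j}=-\dfrac{\partial^2}{\partial z^i\,\partial\bar z^j}\log\det[g^T_{k\bar l}]$. Since $\det[g^T_{k\bar l}]=\det[g_{kl}]\circ\pi$ is again a pull-back function, the second displayed identity gives $R^T_{i\bar j}=-\tfrac14\bigl(\partial_i\partial_j\log\det[g_{kl}]\bigr)\circ\pi$, and since $\beta_{ij}=-\partial_i\partial_j\log\det[g]$ by definition, this equals $\tfrac14\beta_{ij}\circ\pi$. (Alternatively, one can contract \eqref{sec} with $g^{T\,k\bar l}$ and simplify using Proposition \ref{p2.4}(1), but the potential computation is shorter.)

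There is no genuine obstacle here; the computation is routine. The only points demanding care are keeping track of the factor $\tfrac14$ produced when $\partial_{z^i}\partial_{\bar z^j}$ hits a pull-back function in these holomorphic coordinates, and matching the index contraction in the quadratic term of the K\"ahler curvature formula with the corresponding term in Proposition \ref{p2.4}(1), which relies on the symmetry of $g$.
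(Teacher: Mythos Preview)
Your proof is correct. The paper does not give a proof of this proposition at all; it simply cites it from Shima's book \cite{r11}, so there is nothing to compare against in the paper itself. Your direct computation in holomorphic coordinates---using that pull-back functions depend only on the real parts $\xi^i$, hence pick up a factor of $\tfrac14$ under $\partial_{z^i}\partial_{\bar z^j}$---is exactly the standard way to verify the identity, and your bookkeeping (including the $p\leftrightarrow q$ swap needed to match Proposition~\ref{p2.4}(1)) is accurate.
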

\begin{definition}
\label{bisectional}
Let $(M,g_0)$ be a Hessian manifold. Let $Q$ be the Hessian curvature tensor for $(g,\nabla)$. By \eqref{sec}, in order to compatible
to the study of complex geometric flows, $g_0$ is said to have
nonnegative Hessian sectional curvature if at any point and for any orthogonal frame, $Q_{iijj}\leq 0$.
\end{definition}

\section{Short time existence}

In this section, we consider the short time existence of the flow \eqref{e1.2}.
It suffices
to prove the short time existence of the parabolic Monge-Amp\`{e}re equation \eqref{e1.3}. We use similar idea of Chau-Tam \cite{r1} where
a modified parabolic complex Monge-Amp\`{e}re equation was studied. To begin our discussion, we introduce the concept of bounded geometry of various orders
which were defined for the K{\"a}hler case in \cite{r1}.
\begin{definition}\label{d3.1} Let $({{M}^{n}},g)$ be a complete affine Riemannian manifold. Let $k\ge 1$ be an integer and $0<\alpha<1$.$g$ is said to have bounded geometry of order $k+\alpha$ if there are positive numbers $r,\kappa_1,\kappa_2$ such that at every $p\in M$ there is a neighborhood $U_p$ of $p$, and local diffeomorphism $\xi_p$ from $D(r):=\{x\in \mathbb{R}^n||x|<1\}$ onto $U_p$ with $\xi_p(0)=p$ satisfying the following properties:\\
\hspace*{0.5cm}(i) the pull back metric $\xi^{*}_p(g)$ satisfies:
\[\kappa_1 g_e\le \xi^{*}_p(g)\le \kappa_2 g_e\]
\hspace*{1cm}where $g_e$ is the standard metric on $\mathbb{R}^n$;\\
\hspace*{0.4cm}(ii) the components $g_{ij}$ of $\xi^{*}_p(g)$ in the natural coordinate of $D(r)\subset \mathbb{R}^n$ are
\hspace*{1cm}uniformly bounded in the standard $C^{k+\alpha}$ norm in $D(r)$ independent of $p$.\\
$g$ is said to have bounded geometry of infinity order if instead of $(ii)$ we have for any $k$, the $k$-$th$ derivatives of $g_{ij}$ in $D(r)$ are bounded by a constant independent of $p$.
\end{definition}
The set $\mathcal{F}=\{(\xi_p,U_p),p\in M\}$ is called a family of \emph{quasi-coordinate neighborhoods}. Now we define the $C^{k+\alpha}$ norm
for $m$-forms on $M$ as follows, where $0 \leq m \leq n$ is a integer.
For any domain $\Omega$ in $\mathbb{R}^n$ and integer $k\geq0$ and $0<\alpha<1$, we denote the standard $C^{k+\alpha}$ norm for functions on $\Omega$ by $|| \cdot||_{\Omega,k+\alpha}$. If $T>0$ and $k$ is even let $|| \cdot||_{\Omega \times (0,T];k+\alpha,k/2+\alpha/2}$ be the standard parabolic $C^{k+\alpha,k/2+\alpha/2}$ norm for functions on $\Omega \times (0,T]$.
The reader is referred to \cite{r2} for details.

We define the $C^{k+\alpha}$ norm for a smooth $m$-form $f$ on $M$ by
\begin{equation}
\|f\|_{m,k,\alpha}=\underset{p\in M}{\sup} \underset{I}{\max}||(\xi^{*}_{p}f)_I||_{D(r),k+\alpha}.
\end{equation}
where $I$ represents a multi-index and the $(\xi^{*}_{p}f)_I$'s are the local components of the form $\xi^{*}_{p}f$. Similarly, if $I>0$ and $k$ is even we define the $C^{k+\alpha,k/2+\alpha/2(M\times [0,T))}$ norm for a smooth time dependent $m$-form $f$ on $M\times [0,T)$ by
\begin{equation}
\|f\|_{m,k,\alpha}=\underset{p\in M}{\sup} \underset{I}{\max}||(\xi^{*}_{p}f)_I||_{D(r)\times [0,T);k+\alpha,k/2+\alpha/2}.
\end{equation}
Denote $C_m^{k+\alpha} (M)$ ($C_m^{k+\alpha, k/2+\alpha/2} (M \times [0,T))$) to be the space of smooth (time dependent) $m$-forms with
the norm $\|\cdot\|_{m,k,\alpha}$. We find that they are Banach spaces and
similar to Lemma 5.1 of \cite{r1}, independent of the choice of $\mathcal{F}$.

We introduce the following conditions on a complete noncompact affine Riemannian manifold $(M^n,g_0)$ as \cite{r4}:

\textbf{(a1)}
\begin{minipage}[t]{0.9\linewidth}
There exists a smooth real function $\rho$ which is uniformly equivalent to the distance function from a fixed point such that $|d\rho|_{g_0}$, $|\nabla d\rho|_{g_0}$ are uniformly bounded.
\end{minipage}

\textbf{(a2)}
\begin{minipage}[t]{0.9\linewidth}
There is a smooth bounded function $u$ and positive constants $S,\theta$ such that
\[g_0-S\beta(g_0)+\nabla du\geq \theta g_0\]
\end{minipage}

\textbf{(a3)}
\begin{minipage}[t]{0.9\linewidth}
For any point $p \in M^n$, there exists a local coordinates $\{x^1,\ldots ,x^n\}$ around $p$ such that $(g_0)_{ij}(p)=\delta_{ij}$, $\partial_k(g_0)_{ij}(p)$ and $\partial_i(\partial_j (g_0)_{kl}-\partial_l (g_0)_{kj})(p)$ are uniformly bounded, $\partial_l \partial_k(g_0)_{ij}(p)$ are uniformly bounded from above and all the changes
of these local coordinate systems are affine transformations. (In the following of the paper, we denote the uniformly bounded constant by $K>0$)
\end{minipage}
~\\

Let us recall a well-known result, of Shi \cite{shi97}.
\begin{lemma}\label{l3.3} Suppose $(M,g_{ij}(x))$ is an n-dimensional complete noncompact Riemannian manifold with its Ricci curvature bounded from below:
\[R_{ij}(x)\geq -k_0g_{ij}(x),\;\;\;\;\forall x\in M,\]
where $0\leq k_0<+\infty$ is a constant. Then there exists a constant $0<C_2<+\infty$ depending only on $n$ and $k_0$ such that for any fixed point $x_0 \in M$, there exists a smooth function $\rho(x)\in C^{\infty}(M)$ such that
\[\begin{aligned}
&C_2^{-1}[1+\gamma(x,x_0)]\leq \rho(x)\leq C_2[1+\gamma(x,x_0)]\\
&|\nabla \rho(x)|\leq C_2,\;\;\;\;\forall x\in M\\
&|\Delta \rho(x)|\leq C_2,\;\;\;\;\forall x\in M
\end{aligned}\]
where $\gamma$ is the distance function.
\end{lemma}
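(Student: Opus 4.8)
The plan is to produce $\rho$ by evolving a constant shift of the distance function under the heat semigroup for the fixed time $t_0 = 1$, reading off all three estimates from standard heat-kernel bounds valid under a lower Ricci bound. Fix $x_0$, write $d(x) := \gamma(x,x_0)$, and set $\rho_0 := 1 + d$; this is globally $1$-Lipschitz, with $\rho_0 \ge 1$ and $|\nabla\rho_0| \le 1$ almost everywhere. Since $\mathrm{Ric} \ge -k_0 g$, the manifold $(M,g)$ is stochastically complete and the heat semigroup $e^{t\Delta}$ is defined on continuous functions of at most linear growth, with uniqueness there (Grigor'yan, Karp--Li); hence $\rho_t := e^{t\Delta}\rho_0$ is smooth for $t > 0$. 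I will take $\rho := \rho_1$; note $\rho \ge 1$ by the maximum principle.

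\emph{Gradient bound.} The Bakry gradient estimate under $\mathrm{Ric} \ge -k_0 g$ gives $|\nabla e^{t\Delta} f| \le e^{k_0 t}\, e^{t\Delta}|\nabla f|$ for Lipschitz $f$; applied to $f = d$, together with $e^{t\Delta}(1) \le 1$, this yields $|\nabla\rho_t| \le e^{k_0 t}$, so $|\nabla\rho| \le e^{k_0}$. Equivalently, the Bochner formula gives $(\partial_t - \Delta)\big(e^{-2k_0 t}|\nabla\rho_t|^2\big) \le 0$, and one concludes by the maximum principle on the complete manifold $M$, the required control of $|\nabla\rho_t|$ near spatial infinity coming from the interior parabolic gradient estimate.

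\emph{Laplacian bound and comparability.} Stochastic completeness gives $\int_M \partial_t H(x,y,t)\,dV(y) = 0$, so, since $\rho_t = 1 + e^{t\Delta}d$, we have $\Delta\rho_t(x) = \partial_t\rho_t(x) = \int_M \partial_t H(x,y,t)\,\big(d(y) - d(x)\big)\,dV(y)$, and $|d(y)-d(x)| \le d(x,y)$. By the Li--Yau/Davies bounds, $|\partial_t H(x,y,t)| \le C(n,k_0)\, t^{-n/2 - 1} e^{-d(x,y)^2/(5t)}$, while Bishop--Gromov volume comparison gives $\int_M e^{-d(x,y)^2/(5t)}\, d(x,y)\, dV(y) \le C(n,k_0)\, t^{(n+1)/2}$ for $t \le 1$. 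Combining, $|\Delta\rho_t(x)| \le C(n,k_0)\, t^{-1/2}$ for all $x$ and all $t \in (0,1]$; at $t = 1$ this is the bound $|\Delta\rho| \le C(n,k_0)$. Integrating in $t$, $|\rho(x) - \rho_0(x)| \le \int_0^1 C(n,k_0)\, t^{-1/2}\, dt = C(n,k_0)$, which with $\rho \ge 1$ yields $C_2^{-1}\big(1 + \gamma(x,x_0)\big) \le \rho(x) \le C_2\big(1 + \gamma(x,x_0)\big)$ for a suitable $C_2 = C_2(n,k_0)$.

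\emph{Main obstacle.} The scheme is short but rests on several nonelementary facts for noncompact manifolds with only $\mathrm{Ric} \ge -k_0$: stochastic completeness and uniqueness of the heat flow among linearly growing functions, the Davies/Li--Yau Gaussian estimates for $H$ and $\partial_t H$, and the Bakry gradient estimate. The one genuinely structural point is the cancellation $\int_M \partial_t H\, dV = 0$, which is what turns the a priori unbounded quantity $\Delta(e^{t\Delta}d)$ into an integral of $\partial_t H$ against the $1$-Lipschitz increment $d(y) - d(x)$, producing a bound independent of the base point. An alternative avoiding the heat kernel is to Greene--Wu mollify $d$ to a smooth exhaustion $\tilde\rho$ with $|\nabla\tilde\rho| \le C$ and, via the Laplacian comparison theorem in the barrier sense, $\Delta\tilde\rho \le C$, and then solve the Poisson problems $\Delta\rho_j = C$ on $\{\tilde\rho < j\}$ with $\rho_j = j$ on the boundary, passing to a limit using the gradient bound and interior Schauder estimates; there the delicate point is the uniform control of $\rho_j - \tilde\rho$, which again reduces to a comparison argument.
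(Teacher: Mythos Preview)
The paper does not prove this lemma at all: it is stated as ``a well-known result, of Shi \cite{shi97}'' and simply quoted. So there is no in-paper argument to compare against; your proposal supplies a proof where the authors only give a citation.

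Your heat-semigroup construction is a legitimate and by now standard route to this result (it is essentially the argument of Tam and others): smooth $1+d(\cdot,x_0)$ by running the heat flow for unit time, read off the gradient bound from Bakry--\'Emery, and extract the Laplacian bound from the identity $\Delta\rho_t=\partial_t\rho_t$ together with Gaussian bounds on $\partial_t H$ and the cancellation $\int\partial_t H=0$. Shi's original argument in \cite{shi97} proceeds differently, via a direct mollification of the distance function and the Laplacian comparison theorem rather than heat-kernel machinery; that approach is more elementary in its inputs but less slick. Your sketch is honest about its black boxes (stochastic completeness, uniqueness in the linear-growth class, the two-sided Davies bound on $|\partial_t H|$, Bishop--Gromov), and all of these are indeed available under $\mathrm{Ric}\ge -k_0 g$. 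The one place to be slightly careful is the pointwise bound on $|\partial_t H|$: Li--Yau by itself only gives a one-sided estimate, so you are implicitly invoking the stronger Gaussian bounds on time derivatives (Davies, Grigor'yan); this is fine, but worth citing explicitly.
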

By a general implicit function Theorem, we get the following lemma:
\begin{lemma}\label{l3.4} Let $(M^n,g_0)$ be a smooth complete non-compact affine Riemannian manifold of bounded geometry of order $2+\alpha$ such that\\
\hspace*{0.5cm}\emph{(i)} $\beta(g_0) \in C^{\alpha,\alpha /2}(M\times [0,T))$;\\
\hspace*{0.4cm}\emph{(ii)} there exists $v_0\in C^{2+\alpha,1+\alpha /2}(M\times [0,T))$ such that\\
\[w_0:=\frac{\partial v_0}{\partial t}-\log \frac{\det ({{g}_{0}}-t\beta ({{g}_{0}})+\nabla d v_0 )}{\det ({{g}_{0}})}\]
\hspace*{0.8cm} satisfies $w_0(x,0)=0$.\\
Then there exists $0<T'\leq T$ such that \eqref{e1.3} has a solution $v\in C^{2+\alpha/2,1+\alpha /4}(M\times [0,T'))$ so that ${{g}_{0}}-t\beta ({{g}_{0}})+\nabla d v$ is uniformly equivalent to $g_0$ in $M\times [0,T')$.

\end{lemma}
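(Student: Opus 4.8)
The plan is to solve \eqref{e1.3} by a contraction mapping scheme based at the approximate solution $v_0$, carried out in the parabolic H\"older spaces attached to the quasi-coordinate family $\mathcal{F}$ of Definition \ref{d3.1} (which in this affine setting we take to consist of affine charts, so that the Hessian $\nabla d(\cdot)$ is just $\partial_i\partial_j(\cdot)$ in each chart). First I would secure uniform parabolicity. Put $\hat{g}(t):=g_0-t\beta(g_0)+\nabla dv_0$. Since $g_0$ has bounded geometry of order $2+\alpha$, $v_0\in C^{2+\alpha,1+\alpha/2}$, and $\beta(g_0)\in C^{\alpha,\alpha/2}$, the components of $\hat{g}$ in the quasi-coordinates are uniformly bounded in $C^{\alpha,\alpha/2}$; moreover $\hat{g}(\cdot,0)=g_0+\nabla dv_0(\cdot,0)=g_0$, using $v_0(\cdot,0)=0$ (which is part of what it means for $v_0$ to be an approximate solution of \eqref{e1.3}, whose conclusion demands $v(\cdot,0)=0$). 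Hence, after shrinking $T$, $\hat{g}$ is uniformly equivalent to $g_0$ on $M\times[0,T)$, and the operator
\[
L\psi:=\frac{\partial\psi}{\partial t}-\hat{g}^{ij}(\nabla d\psi)_{ij},
\]
which in an affine chart reads $\partial_t\psi-\hat{g}^{ij}\partial_i\partial_j\psi$ and has no lower order terms, is uniformly parabolic on $M\times[0,T)$ with $C^{\alpha,\alpha/2}$ coefficients of controlled norm.

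Next I would install the linear theory: by interior parabolic Schauder estimates in each ball $D(r)$, patched over $\mathcal{F}$ exactly as in the K\"ahler case of \cite{r1,r2}, for every $0<T'\le T$ and $f\in C^{\alpha,\alpha/2}(M\times[0,T'])$ the problem $L\psi=f$ on $M\times[0,T']$, $\psi(\cdot,0)=0$, has a unique solution $\psi=:L^{-1}f\in C^{2+\alpha,1+\alpha/2}(M\times[0,T'])$, with $\|L^{-1}f\|_{C^{2+\alpha,1+\alpha/2}}\le C\|f\|_{C^{\alpha,\alpha/2}}$ and $C$ independent of $T'\in(0,T]$; likewise $L^{-1}\colon C^{\alpha/2,\alpha/4}\to C^{2+\alpha/2,1+\alpha/4}$ is bounded uniformly in $T'$.

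Now write $v=v_0+\psi$, so that \eqref{e1.3} becomes $L\psi=-w_0+Q(\psi)$, $\psi(\cdot,0)=0$, where
\[
Q(\psi):=\log\frac{\det(\hat{g}+\nabla d\psi)}{\det\hat{g}}-\hat{g}^{ij}(\nabla d\psi)_{ij}
\]
is the second order Taylor remainder of $A\mapsto\log\det A$ at $\hat{g}$; it is smooth in the $2$-jet of $\psi$ so long as $\hat{g}+\nabla d\psi>0$, which holds once $\|\psi\|_{C^{2+\alpha/2,1+\alpha/4}}$ is small, by uniform positivity of $\hat{g}$. I would then run the Banach fixed point theorem for $\mathcal{T}(\psi):=L^{-1}\bigl(-w_0+Q(\psi)\bigr)$ on the closed ball $X_\delta=\{\psi\in C^{2+\alpha/2,1+\alpha/4}(M\times[0,T']):\psi(\cdot,0)=0,\ \|\psi\|\le\delta\}$. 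Two estimates close it. First, since $w_0\in C^{\alpha,\alpha/2}$ with $w_0(\cdot,0)=0$ we have $\|w_0\|_{C^0(M\times[0,T'])}\le(T')^{\alpha/2}\|w_0\|_{C^{\alpha,\alpha/2}}$, and interpolating between $C^{\alpha,\alpha/2}$ and $C^0$ gives $\|w_0\|_{C^{\alpha/2,\alpha/4}(M\times[0,T'])}\le C(T')^{\alpha/4}\|w_0\|_{C^{\alpha,\alpha/2}}\to0$ as $T'\to0$; this sacrifice of half the H\"older exponent is exactly what makes the conclusion land in $C^{2+\alpha/2,1+\alpha/4}$ rather than in $C^{2+\alpha,1+\alpha/2}$. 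Second, the H\"older product estimates in the quasi-coordinates, together with $Q(0)=0$ and $DQ(0)=0$, give $\|Q(\psi_1)-Q(\psi_2)\|_{C^{\alpha/2,\alpha/4}}\le C(\|\psi_1\|+\|\psi_2\|)\|\psi_1-\psi_2\|$ for $\psi_1,\psi_2\in X_\delta$, $\delta$ small. Choosing $\delta$ small (in terms of $C$ and $\|L^{-1}\|$) and then $T'$ small (in terms of $\delta$ and $\|w_0\|$), $\mathcal{T}$ maps $X_\delta$ into itself and is a contraction; its fixed point $\psi$ gives a solution $v=v_0+\psi\in C^{2+\alpha/2,1+\alpha/4}(M\times[0,T'))$ of \eqref{e1.3}, and $g_0-t\beta(g_0)+\nabla dv=\hat{g}+\nabla d\psi$ is uniformly equivalent to $g_0$ since $\hat{g}$ is and $\nabla d\psi$ is small.

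The main obstacle I anticipate is the linear step on the noncompact manifold: constructing $L^{-1}$ with operator norm independent of $T'\in(0,T]$, which requires running the parabolic Schauder estimates uniformly across all charts of $\mathcal{F}$ and knowing that these quasi-coordinates provide a coherent global H\"older calculus (so that, in particular, the product estimates for $Q$ hold with chart-independent constants). Granted this, the Newton iteration is routine, and the interpolation bound for $w_0$, though elementary, is precisely what forces the drop in H\"older regularity recorded in the statement.
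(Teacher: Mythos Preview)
Your approach is correct and is essentially the paper's, repackaged: both invert the linearized parabolic operator on the noncompact manifold via Schauder theory in the quasi-coordinates (surjectivity by exhaustion with compact domains plus diagonalization, uniqueness by the maximum principle using the exhaustion function~$\rho$ from condition~\textbf{(a1)}), and both then use a perturbation argument around $v_0$ exploiting $w_0(\cdot,0)=0$. The paper frames this as the inverse function theorem and produces the small right-hand side by the time-shift trick $w_\tau(x,t)=w_0(x,t-\tau)$ for $t>\tau$ and $0$ otherwise (borrowed from \cite{r1}), whereas you do a direct contraction and use interpolation to make $\|w_0\|_{C^{\alpha/2,\alpha/4}([0,T'])}$ small; these are equivalent and your interpolation is arguably more transparent. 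One slip: you claim $\hat g(\cdot,0)=g_0$ from ``$v_0(\cdot,0)=0$'', but the lemma does not assume $v_0(\cdot,0)=0$, and in any case that would not give $\nabla dv_0(\cdot,0)=0$. The paper sidesteps this by linearizing at $\sigma:=g_0-t\beta(g_0)$ rather than at $\hat g$, so uniform equivalence to $g_0$ for small $t$ is immediate from boundedness of $\beta(g_0)$; you can make the same adjustment without changing anything else.
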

\begin{proof}For convenience, we set $\sigma:={{g}_{0}}-t\beta (g_0)$. We find $\sigma$ satisfies (i) instead of $\beta(g_0)$
provided $t$ is sufficiently small and we can choose a smaller $T$ such that $\sigma$ is uniformly equivalent to $g_0$ in $M\times [0,T]$.
Thus, there exists a positive constant $C_1>0$ such that $C_1 g_0\geq\sigma\geq C^{-1}_1 g_0$ in $M\times [0,T]$.
From now on, we denote the spaces $C^{k+\beta,k/2+\beta /2}(M\times [0,T])$ and $C^{k+\beta}(M)$ simply by $C^{k+\beta,k/2+\beta /2}$ and $C^{k+\beta}$ for
convenience.
We define
\[\mathcal{B}=\{v\in C^{2+\alpha/2,1+\alpha/4}|\; \|v\|_{2+\alpha/2,1+\alpha/4}<\delta,v(x,0)=0\}\]
and
\[F:\mathcal{B}\rightarrow C^{\alpha/2,\alpha/4}\]
by
\[F(v)=\frac{\partial v}{\partial t}-\log \frac{\det (\sigma+\nabla d v )}{\det ({{g}_{0}})}\]
It is easy to find the Fr\'{e}chet derivative $DF_v$ of the map $F$ at any $v\in \mathcal{B}$ is given by
\[DF_{v}(\phi)=\frac{\partial\phi}{\partial t}-(^{v}\sigma)^{ij}\phi_{ij}\]
where $(^{v}\sigma)^{ij}$ is the inverse of $(^{v}\sigma)_{ij}:=\sigma_{ij}+v_{ij}$.

\underline{{\textbf {Claim}}} $DF_v$ is a bijection from the Banach space
\[\mathcal{B}_1=\{\phi \in C^{2+\alpha/2,1+\alpha/4}|\phi(x,0)=0\}\]
onto the space $C^{\alpha/2,\alpha/4}$.

Note that $(M^n,g_0)$ has bounded geometry of $2+\alpha$. By Lemma \ref{l3.3}, we find $(M^n,g_0)$ satisfies \textbf{(a1)}.
Let $\rho$ be a smooth function on $M$ equivalent to the distance function with respect to $g_0$ from a fixed point as in \textbf{(a1)}. Since
$(^{v}\sigma)$ are uniformly equivalent to $g_0$ and there is a constant $C_2$ such that
\[|(^{v}\sigma)_{ij}\rho_{ij}|<C_2\rho\]
in $M\times [0,T]$, we have $\phi\in \mathcal{B}_1$ and for any $\epsilon>0$,
\[\frac{\partial}{\partial t}(\phi+\epsilon e^{C_2 t}\rho)-(^{v}\sigma)_{ij}(\phi_{ij}+\epsilon e^{C_2 t}\rho_{ij})>0.\]
On the other hand, the minimum of $\phi+\epsilon e^{C_2 t}\rho$ is attained at a point in a compact subset of $M\times [0,T]$.
Thus, by the maximum principle, we have $\phi+\epsilon e^{C_2 t}\rho\geq 0$ since $\phi(x,0)=0$.
Letting $\epsilon\rightarrow 0$, we get $\phi \geq 0$ on $M\times [0,T]$.
Similarly, we can prove that $\phi\leq 0$. Hence $\phi=0$ and $DF_v$ is injective.

Now let $\Omega_l$ be a bounded domains with smooth boundary which exhaust $M$ and $^l \phi$ be the solution to
\[
\left\{ \begin{aligned}
   &DF_v(^l \phi) = w  \mbox{ in $\Omega_l\times [0,T]$}\\
      &^l \phi=0 \mbox{ on $\Omega_l \times \{0\} \cup \partial\Omega_l\times [0,T]$}, \\
\end{aligned} \right.
\]
where $w\in
C^{\alpha/2,\alpha/4}(M\times [0,T])$. Therefore, $DF_v(^l\phi+C_3 t)>0$, where $C_3$
is a positive constant chosen such that $C_3>\sup_{M\times [0,T]}|w|$. By the maximum principle, we have $^l\phi\geq -C_3$.
Similarly, we can prove $^l\phi\leq -C_3$. Hence the sequence $|^l\phi|$ is uniformly bounded by the constant $C_3$.

For any $p\in M$, let $(\xi_p,U_p)$ and $\xi_p:D(r)\rightarrow U_p$ be as in Definition \ref{d3.1}.
For simplicity, we also denote the pull back of $^l\phi$ by $^l\phi$. We find the pull back of $^l\phi$ satisfies
\[\frac{\partial ^l\phi}{\partial t}-(^{v}\sigma)^{ij}(^l\phi)_{ij}=w\]
in $D(r)$, where $\{(^{v}\sigma)^{ij}\}$ is uniformly equivalent
to the standard Euclidean metric. Furthermore, $(^{v}\sigma)^{ij}$ are uniformly bounded in the
standard $C^{\alpha,\alpha/2}$ norm on $D(r)\times [0,T]$. By the Schauder estimates we have
\[\|^l\phi\|_{D(r/2)\times [0,T],2+\alpha/2,1+\alpha/4}\leq C_4\]
for some constant $C_4$ independent of $p$ for sufficiently large $l$, Now a standard diagonalizing subsequence argument produces a $\phi\in C^{2+\alpha/2,1+\alpha/4}$ such that $DF_v(\phi)=w$. Since $DF_v$ is surjective and the claim is established.

Now let $v_0 \in C^{\alpha/2,\alpha/4}(M\times [0,T))$ be the function in (ii) and $w_0=F(v_0)$.
We have $w_0 (x,0)=0$. By the inverse function theorem of Banach space, there exists $\epsilon>0$ such that if $\|w-w_0\|_{\alpha/2,\alpha/4}<\epsilon$,
the equation $F(v)=w$ admits a solution $v\in C^{\alpha/2,\alpha/4}$.

For any $0<\tau<1$,we define
\[w_\tau (x,t)=\left\{ \begin{aligned}
&0 &\;\;\;t\leq \tau\\
w_0(x,\;&t-\tau)&\;\;\;\tau<t<1\\
\end{aligned} \right.\]
Similar to Claim 2 in the proof of Lemma 2.2 in \cite{r1}, we have that $\|w_\tau -w_0\|_{\alpha/2,\alpha/4}<\epsilon$ for sufficiently small $\tau>0$.
Then by the inverse function theorem we have $F(v)=0$ on $M\times [0,\tau]$ for some $\tau>0$. Hence the function $v$ solves \eqref{e1.3}. The fact that $v$ is smooth follows from a standard bootstrapping argument applied to \eqref{e1.3}. The proof is similar to that of Lemma \ref{l4.6} by differentiating the equation
with respect to $x^\gamma$ and using the parabolic Schauder theory. See also Proposition 2.1 of \cite{r1}.
\end{proof}
We are ready to prove the following short time existence.
\begin{lemma}\label{l3.5} Let $(M^n,g_0)$ be a smooth complete non-compact affine Riemannian manifold with bounded geometry of order $2+\alpha$ and $\beta(g_0)$ is in $C^{2+\alpha}(M)$ for some $\alpha>0$. Then there exists $T>0$ such that \eqref{e1.3} has a smooth solution $g(t)$ on $M\times [0,T)$ such that for each $t$, $g(t)$ is equivalent to $g_0$.
\end{lemma}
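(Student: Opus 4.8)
The plan is to deduce the statement directly from Lemma~\ref{l3.4}, taking $v_0\equiv 0$ as the initial approximation. This is the most economical choice, and it is admissible precisely because $v_0\equiv 0$ already satisfies the zeroth-order compatibility condition of the parabolic Monge-Amp\`ere equation \eqref{e1.3}: at $t=0$ the right-hand side of \eqref{e1.3} evaluated on $\varphi\equiv 0$ equals $\log(\det g_0/\det g_0)=0$, which matches $\partial_t\varphi(\cdot,0)=0$.

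Concretely, I would verify the two hypotheses of Lemma~\ref{l3.4}. Hypothesis~(i) is immediate: $\beta(g_0)$ is time-independent and, by assumption, $\beta(g_0)\in C^{2+\alpha}(M)\subset C^{\alpha}(M)$, hence $\beta(g_0)\in C^{\alpha,\alpha/2}(M\times[0,T))$ for every $T>0$. For hypothesis~(ii) with $v_0\equiv 0$ one has $\nabla d v_0=0$ and $\partial_t v_0=0$, so that
\[
w_0(x,t)=-\log\frac{\det\bigl(g_0-t\beta(g_0)\bigr)}{\det g_0},
\]
and $w_0(x,0)=0$ as noted. To make sense of this one must know that $\sigma:=g_0-t\beta(g_0)$ is positive definite and uniformly equivalent to $g_0$ on $M\times[0,T)$ for some $T>0$; this is exactly the elementary observation at the start of the proof of Lemma~\ref{l3.4}, and it uses bounded geometry of order $2+\alpha$ (so that $\kappa_1 g_e\le \xi_p^{*}(g_0)\le \kappa_2 g_e$ uniformly, as in Definition~\ref{d3.1}) together with the bound $\beta(g_0)\in C^{2+\alpha}(M)$, which makes $\beta(g_0)$ bounded with respect to $g_0$ uniformly over $M$; then $(1-Ct)g_0\le \sigma\le (1+Ct)g_0$ for $t$ small. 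With $\sigma$ under control, $w_0$ is a smooth-in-$t$ family of uniformly $C^{2+\alpha}(M)$ functions, hence $w_0\in C^{\alpha,\alpha/2}(M\times[0,T))$, and trivially $v_0\equiv 0\in C^{2+\alpha,1+\alpha/2}(M\times[0,T))$.

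Lemma~\ref{l3.4} then yields $0<T'\le T$ and a solution $v\in C^{2+\alpha/2,1+\alpha/4}(M\times[0,T'))$ of \eqref{e1.3} such that $g(t):=g_0-t\beta(g_0)+\nabla d v$ is uniformly equivalent to $g_0$ on $M\times[0,T')$; by the computation reproduced in Section~2, $g(t)$ solves \eqref{e1.2} with $g(0)=g_0$, and in particular each $g(t)$ is equivalent to $g_0$, as claimed. Smoothness of $g(t)$ follows from the regularity of $v$, obtained by the bootstrapping argument indicated at the end of the proof of Lemma~\ref{l3.4}: differentiate \eqref{e1.3} in the spatial variables and apply the interior parabolic Schauder estimates in the quasi-coordinate charts, exactly as in the proof of Lemma~\ref{l4.6}. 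I expect no genuine obstacle here; the only points that actually require a (short) argument are the uniform positivity and equivalence of $\sigma=g_0-t\beta(g_0)$ for small $t$---which is where bounded geometry and the $C^{2+\alpha}$ bound on $\beta(g_0)$ are used---and the smoothness bootstrap, everything else being a direct invocation of Lemma~\ref{l3.4}.
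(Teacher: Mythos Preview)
Your proposal is correct and follows essentially the same route as the paper: both proofs reduce the lemma to a direct application of Lemma~\ref{l3.4} by exhibiting an admissible initial approximation $v_0$. The only difference is the choice of $v_0$: you take $v_0\equiv 0$, while the paper takes $v_0=t\log\bigl(\det(g_0-t\beta(g_0))/\det g_0\bigr)$; both satisfy $w_0(x,0)=0$, and your choice is in fact the simpler one.
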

\begin{proof}Let
\[v_0=t\log \frac{\det ({{g}_{0}}-t\beta ({{g}_{0}}) )}{\det ({{g}_{0}})}.\]
Thus, $v_0$ satisfies condition (iii) in Lemma \ref{l3.4} on $M\times [0,T)$ for some $T>0$. Consequently,
there exists $0<T'\leq T$ such that \eqref{e1.3} has a smooth solution $v\in C^{2+\alpha/2,1+\alpha /4}(M\times [0,T'))$ so that $g(t)={{g}_{0}}-t\beta ({{g}_{0}})+\nabla d v$ is uniformly equivalent to $g_0$ in $M\times [0,T')$.
\end{proof}

\section{A priori estimates for the geometric flow}

Let $(M^n,g_0)$ be an affine Riemannian manifold and $g(t)$ a solution of \eqref{e1.2} with initial data $g(0)=g_0$ on $M\times [0,S]$ for some $S>0$.
Let $\varphi$ be a solution of the equation \eqref{e1.3} corresponding to $g(t)$, namely,
\[\varphi (x,t)=\int_{0}^{t}{\log \frac{\det g(x,s)}{\det {{g}_{0}}(x)}}ds\]
and $g(t)=g_0 -t\beta(g_0)+\nabla d\varphi$. In this section, we establish the \emph{a priori} estimates for \eqref{e1.3}.
\begin{lemma}\label{l4.1} Let $\Psi=t\dot{\varphi}-\varphi-nt$ and $\Lambda=(S_1 -t)\dot{\varphi}+\varphi+nt$ where $\dot{\varphi}=\frac{\partial}{\partial t}\varphi$ and $S_1$ is
an arbitrary given number. Then we have
\[(\frac{\partial}{\partial t}-L_g)\Psi=-\emph{tr}_g g_0\]
and
\[(\frac{\partial}{\partial t}-L_g)\Lambda=-S_1 \emph{tr}_g(\beta(g_0))+\emph{tr}_g g_0\]
where $L_g(f):=\emph{tr}_g \nabla df$.
\end{lemma}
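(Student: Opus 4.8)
The plan is to reduce both statements to the single evolution identity
\[
\left(\frac{\partial}{\partial t}-L_g\right)\dot{\varphi}=-\tr_g\beta(g_0),
\]
together with an elementary computation of $\left(\frac{\partial}{\partial t}-L_g\right)\varphi$. To obtain the displayed identity, recall that $\varphi$ solves \eqref{e1.3}, so $\dot{\varphi}=\log(\det g(t)/\det g_0)$ with $g(t)=g_0-t\beta(g_0)+\nabla d\varphi$. Since the flat connection $\nabla$ does not depend on $t$, we have $\partial_t(\nabla d\varphi)=\nabla d\dot{\varphi}$, hence $\partial_t g_{ij}=-\beta(g_0)_{ij}+(\nabla d\dot{\varphi})_{ij}$; moreover, in any affine coordinate system $(\nabla df)_{ij}=\partial_i\partial_j f$, so that $L_g f=g^{ij}(\nabla df)_{ij}$. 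Differentiating $\dot{\varphi}$ once more in $t$ and using the standard formula $\partial_t\log\det g=g^{ij}\partial_t g_{ij}$ gives $\partial_t\dot{\varphi}=-\tr_g\beta(g_0)+L_g\dot{\varphi}$, which is the claimed identity.

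Next I would compute $\left(\frac{\partial}{\partial t}-L_g\right)\varphi=\dot{\varphi}-L_g\varphi$. From $g(t)=g_0-t\beta(g_0)+\nabla d\varphi$ one reads off $(\nabla d\varphi)_{ij}=g_{ij}-(g_0)_{ij}+t\beta(g_0)_{ij}$, so taking the $g$-trace yields $L_g\varphi=n-\tr_g g_0+t\,\tr_g\beta(g_0)$, whence
\[
\left(\frac{\partial}{\partial t}-L_g\right)\varphi=\dot{\varphi}-n+\tr_g g_0-t\,\tr_g\beta(g_0).
\]

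Finally I would assemble the two formulas by linearity and the Leibniz rule for the first-order part of $\frac{\partial}{\partial t}-L_g$. From the first identity, $\left(\frac{\partial}{\partial t}-L_g\right)(t\dot{\varphi})=\dot{\varphi}+t\left(\frac{\partial}{\partial t}-L_g\right)\dot{\varphi}=\dot{\varphi}-t\,\tr_g\beta(g_0)$ and $\left(\frac{\partial}{\partial t}-L_g\right)((S_1-t)\dot{\varphi})=-\dot{\varphi}+(S_1-t)\left(\frac{\partial}{\partial t}-L_g\right)\dot{\varphi}=-\dot{\varphi}-(S_1-t)\tr_g\beta(g_0)$, while $\left(\frac{\partial}{\partial t}-L_g\right)(nt)=n$. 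Since $\Psi=t\dot{\varphi}-\varphi-nt$, subtracting the second-paragraph expression for $\left(\frac{\partial}{\partial t}-L_g\right)\varphi$ and the constant $n$ from $\dot{\varphi}-t\,\tr_g\beta(g_0)$ leaves exactly $-\tr_g g_0$; since $\Lambda=(S_1-t)\dot{\varphi}+\varphi+nt$, adding those same two expressions to $-\dot{\varphi}-(S_1-t)\tr_g\beta(g_0)$ leaves exactly $-S_1\tr_g\beta(g_0)+\tr_g g_0$. In both cases the $\dot{\varphi}$ terms and the $t\,\tr_g\beta(g_0)$ terms cancel. I expect no genuine obstacle here: the argument is purely formal once one is careful that $\partial_t$ commutes with the fixed connection $\nabla$ (so $\partial_t\nabla d\varphi=\nabla d\dot{\varphi}$) and that $L_g$ acts as the $g$-trace of the coordinate Hessian in affine charts; the only thing that needs care is the sign bookkeeping in the two cancellations, which is precisely the affine analogue of Cao's classical identity for the K\"{a}hler-Ricci flow.
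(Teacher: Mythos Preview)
Your proof is correct and follows essentially the same route as the paper. The only cosmetic difference is that the paper first writes $\partial_t\dot{\varphi}=-\tr_g\beta(g)$ via the flow equation and then computes $L_g\dot{\varphi}=-\tr_g\beta(g)+\tr_g\beta(g_0)$ separately before subtracting, whereas you go directly through $\partial_t g=-\beta(g_0)+\nabla d\dot{\varphi}$; and for $\Lambda$ the paper uses the shortcut $\Lambda=S_1\dot{\varphi}-\Psi$ rather than reassembling from scratch.
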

\begin{proof}
Differentiating \eqref{e1.3} with respect to $t$, we have
\begin{equation}
\label{e4.1}
\begin{aligned}
\frac{\partial}{\partial t}\dot{\varphi}&=\frac{\partial}{\partial t}\log \frac{\det (g(t))}{\det ({{g}_{0}})}\\
&=g^{ij}\frac{\partial}{\partial t}g_{ij}\\
&=-{\rm tr}_{g}{\beta(g)}.
\end{aligned}
\end{equation}
By straightforward calculations, we have
\[\begin{aligned}
L_g \dot{\varphi}&={\rm tr}_{g}(\frac{\partial}{\partial t}g-\frac{\partial}{\partial t}({{g}_{0}}-t\beta ({{g}_{0}}))\\
&=-{\rm tr}_{g}{\beta(g)}+{\rm tr}_{g}\beta ({{g}_{0}}).
\end{aligned}\]
It follows that
\begin{equation}
\label{e4.2}
(\frac{\partial}{\partial t}-L_g)t\dot{\varphi}=\dot{\varphi}-t{\rm tr}_{g}\beta ({{g}_{0}}).
\end{equation}
By \eqref{e4.2}, we get
\[\begin{aligned}
(\frac{\partial}{\partial t}-L_g)\Psi&=(\frac{\partial}{\partial t}-L_g)\dot{\varphi}-\dot{\varphi}+{\rm tr}_{g}\nabla d\varphi-n \\
&=-{\rm tr}_{g}(g+t\beta(g_0)-\nabla d\varphi) \\
&=-{\rm tr}_{g}g_0.
\end{aligned}\]
By \eqref{e4.1}, we have
\[\begin{aligned}
(\frac{\partial}{\partial t}-L_g)\Lambda&=-(\frac{\partial}{\partial t}-L_g)\Psi+S_1(\frac{\partial}{\partial t}-L_g)\dot{\varphi} \\
&=-S_1 {\rm tr}_g(\beta(g_0))+{\rm tr}_g g_0.
\end{aligned}\]

\end{proof}

Next, we compute $(\frac{\partial}{\partial t}-L_g){\rm tr}_{g_0}g$ which can be viewed as a real version of the Chern-Ricci flow, as discussed by Tossati-Wienkove \cite{r3}.
\begin{lemma}\label{l4.2}
Let $\Upsilon={\rm tr}_{g_0}g$ and $\Theta={\rm tr}_g g_0$. We have
\[(\frac{\partial}{\partial t}-L_g)\log \Upsilon={\rm \uppercase\expandafter{\romannumeral1}}+{\rm \uppercase\expandafter{\romannumeral2}}\]
where
\[\begin{aligned}
{\rm \uppercase\expandafter{\romannumeral1}}&=\frac{1}{{\rm tr}_{g_0}g}\biggl(-g^{jp}g^{qi}g_0^{kl}\partial_k g_{ij}\partial_l g_{pq}+\frac{1}{{\rm tr}_{g_0}g}g^{lk}\partial_k {\rm tr}_{g_0}g\partial_l {\rm tr}_{g_0}g\\
&\;\;\;\;+2g_0^{li}g^{jp}g^{qk}g_0^{rl'}(\partial_k (g_0)_{l'i})g_{jr}\partial_l g_{pq}+2g^{qk}\partial_k g_0^{pl}(T_0)_{lq}^p\biggl)
\end{aligned}\]
\[\begin{aligned}
{\rm \uppercase\expandafter{\romannumeral2}}=-\frac{1}{{\rm tr}_{g_0}g}\biggl(g^{ij}g_0^{kl}(\partial_l(T_0)_{ik}^j+\partial_i(T_0)_{jl}^k)+g^{ij}(\partial_i\partial_j g_0^{kl})g_{kl}\biggl)
\end{aligned}.\]
Moreover we have
\[{\rm \uppercase\expandafter{\romannumeral1}}\leq \frac{1}{{\rm tr}_{g_0}g}\biggl(-2g_0^{li}g^{qk}(T_0)_{ik}^l\frac{\partial_q {\rm tr}_{g_0}g}{{\rm tr}_{g_0}g}+2g^{qk}\partial_k g_0^{pl}(T_0)_{lq}^p+g_0^{li}g^{jp}g^{qk}C_{ijk}C_{lpq}\biggl),\]
where
\[(T_0)_{jl}^k:=\partial_j (g_0)_{kl}-\partial_l (g_0)_{kj}\]
\[C_{ijk}=-g_0^{rl'}(\partial_k (g_0)_{l'i})g_{jr}.\]
\end{lemma}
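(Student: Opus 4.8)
The plan is to derive the evolution equation for $\log \Upsilon$ by a direct computation in a fixed affine coordinate system, following the classical Chern-Ricci flow computation of Tosatti-Weinkove adapted to the real affine setting. First I would record the two elementary facts that drive everything: along the flow $\partial_t g_{ij} = -\beta_{ij}(g) = \partial_i\partial_j \log\det g$, and $L_g f = g^{kl}(\partial_k\partial_l f - \Gamma^m_{kl}\partial_m f)$, where because the coordinates are affine for $\nabla$ the Christoffel symbols appearing are exactly $\gamma^m_{kl}$, the difference tensor. Since $g_0$ is only Riemannian (not Hessian), I must keep track of the torsion-type tensor $(T_0)^k_{jl} = \partial_j(g_0)_{kl} - \partial_l(g_0)_{kj}$, which measures the failure of $g_0$ to be Hessian; this is the source of the extra terms $C_{ijk}$ relative to the Hessian case.

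Next I would compute $\partial_t \Upsilon = \partial_t(g_0^{ij} g_{ij}) = -g_0^{ij}\beta_{ij}(g)$, noting $g_0$ is time-independent, and then compute $L_g \Upsilon = g^{kl}\nabla_k\nabla_l(g_0^{ij}g_{ij})$. Expanding the covariant Hessian of the scalar $g_0^{ij}g_{ij}$ in the affine coordinates produces: a term $g^{kl}g_0^{ij}\partial_k\partial_l g_{ij}$ which combines with $\partial_t\Upsilon$ to give (essentially) $-g_0^{ij}\beta_{ij}(g) + g^{kl}g_0^{ij}\partial_k\partial_l g_{ij}$, plus first-derivative cross terms coming from $\partial_k g_0^{ij}$ and the $\Gamma = \gamma$ correction. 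The key structural identity is that $\partial_k\partial_l g_{ij} = \nabla^{g_0}_k\nabla^{g_0}_l g_{ij} + (\text{lower order})$, and one rewrites the second derivatives of $g$ using the flow equation; the quadratic-gradient terms then assemble into the negative-definite piece $-g^{jp}g^{qi}g_0^{kl}\partial_k g_{ij}\partial_l g_{pq}$ together with the square-gradient-of-trace term $\frac{1}{\Upsilon}g^{lk}\partial_k\Upsilon\,\partial_l\Upsilon$ after dividing by $\Upsilon$ (the usual $\partial_t\log\Upsilon = \frac{1}{\Upsilon}\partial_t\Upsilon$ and $L_g\log\Upsilon = \frac{1}{\Upsilon}L_g\Upsilon - \frac{1}{\Upsilon^2}|\partial\Upsilon|_g^2$ bookkeeping). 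Terms where derivatives fall on $g_0^{-1}$ or on $g_0$ itself get organized into $\mathrm{I}$ (those involving $\partial g$, hence bounded using $C$) and $\mathrm{II}$ (the pure-$g_0$ second-order terms $g^{ij}g_0^{kl}(\partial_l(T_0)^j_{ik} + \partial_i(T_0)^k_{jl})$ and $g^{ij}(\partial_i\partial_j g_0^{kl})g_{kl}$), matching the claimed expressions after a careful symmetrization of dummy indices.

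For the inequality bounding $\mathrm{I}$, the idea is the standard Cauchy-Schwarz / completing-the-square trick of Tosatti-Weinkove. One isolates inside $\mathrm{I}$ the genuinely quadratic-in-$\partial g$ expression $-g^{jp}g^{qi}g_0^{kl}\partial_k g_{ij}\partial_l g_{pq} + \frac{1}{\Upsilon}g^{lk}\partial_k\Upsilon\,\partial_l\Upsilon + 2g_0^{li}g^{jp}g^{qk}g_0^{rl'}(\partial_k(g_0)_{l'i})g_{jr}\partial_l g_{pq}$; writing $\partial_k\Upsilon = g_0^{ij}\partial_k g_{ij} + (\partial_k g_0^{ij})g_{ij}$ and substituting, the leading negative term dominates the cross term $\frac{1}{\Upsilon}g^{lk}\partial_k\Upsilon\,\partial_l\Upsilon$ by Cauchy-Schwarz in the $g_0$-trace, which is exactly the classical Tosatti-Weinkove calculus lemma; what is left over is a perfect square in $\partial g$ that one absorbs into $g_0^{li}g^{jp}g^{qk}C_{ijk}C_{lpq}$ with $C_{ijk} = -g_0^{rl'}(\partial_k(g_0)_{l'i})g_{jr}$, and a linear-in-$\partial_q\Upsilon$ remainder that, after again expanding $\partial_q\Upsilon$, contributes the term $-2g_0^{li}g^{qk}(T_0)^l_{ik}\frac{\partial_q\Upsilon}{\Upsilon}$ plus pieces that cancel or combine with $2g^{qk}\partial_k g_0^{pl}(T_0)^p_{lq}$. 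The main obstacle I anticipate is purely bookkeeping: correctly tracking the many index contractions and the (anti)symmetrizations that turn raw $\partial g_0$ terms into the tensors $T_0$ and $C$, and verifying that the error terms produced by the completing-the-square step are nonpositive modulo the explicitly allowed $C\cdot C$ and $T_0$-terms — there is no deep idea beyond the Chern-Ricci flow template, only the care needed to keep the real/affine (non-Hermitian) torsion contributions straight.
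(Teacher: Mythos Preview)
Your overall template (Tosatti--Weinkove computation adapted to the affine setting) is correct, but there are two concrete gaps that would prevent the argument from going through as written.

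First, your description of $L_g$ is wrong. In this paper $L_g f := \operatorname{tr}_g \nabla d f$ where $\nabla$ is the \emph{flat} affine connection, not the Levi--Civita connection of $g$. In an affine coordinate system the Christoffel symbols of $\nabla$ vanish, so $L_g f = g^{ij}\partial_i\partial_j f$ with \emph{no} $\gamma^m_{kl}$ correction. If you insert the extra $-g^{kl}\gamma^m_{kl}\partial_m f$ term you will produce spurious first-order pieces that do not belong in $\mathrm{I}$ or $\mathrm{II}$.

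Second, and more seriously, you are missing the structural step that makes the computation close up. In $L_g\Upsilon$ the second-order piece is $g^{ij}g_0^{kl}\partial_i\partial_j g_{kl}$, while in $\partial_t\Upsilon = g_0^{kl}\partial_k\partial_l\log\det g$ the second-order piece is $g^{ij}g_0^{kl}\partial_k\partial_l g_{ij}$; the derivative indices sit on different slots. The paper swaps them by observing that $T^k_{jl}:=\partial_j g_{kl}-\partial_l g_{kj}$ satisfies $\partial_t T^k_{jl}=0$ along the flow (because $\partial_t g_{ij}=\partial_i\partial_j\log\det g$ is symmetric in all partial derivatives), hence $T^k_{jl}=(T_0)^k_{jl}$ for all $t$. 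Applying this twice gives $g^{ij}g_0^{kl}\partial_i\partial_j g_{kl}=g^{ij}g_0^{kl}\partial_k\partial_l g_{ij}+g^{ij}g_0^{kl}(\partial_l(T_0)^j_{ik}+\partial_i(T_0)^k_{jl})$, which is exactly where $\mathrm{II}$ comes from. Your sketch instead tries to go through $\nabla^{g_0}_k\nabla^{g_0}_l g_{ij}$, which does not accomplish this index swap and will not produce the cancellation between $\partial_t\Upsilon$ and $L_g\Upsilon$.

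For the inequality on $\mathrm{I}$, ``Cauchy--Schwarz in the $g_0$-trace'' is the right spirit but too vague. The paper's device is to set $B_{ijk}=\partial_i g_{kj}-g_{ij}\Upsilon^{-1}\partial_k\Upsilon+C_{ijk}$ with $C_{ijk}$ free, note $K:=g_0^{li}g^{jp}g^{qk}B_{ijk}B_{lpq}\ge 0$, expand, use the identity $g_0^{li}\partial_i g_{kl}=\partial_k\Upsilon-(\partial_k g_0^{li})g_{il}+g_0^{li}(T_0)^l_{ik}$ (again relying on $T=T_0$) to simplify the cross term, and then choose $C_{ijk}=-g_0^{rl'}(\partial_k(g_0)_{l'i})g_{jr}$ to kill two residual terms. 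Without the $T=T_0$ identity this simplification does not occur and you will not isolate the $(T_0)^l_{ik}\partial_q\Upsilon$ term cleanly.
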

\begin{proof}
We calculate at a fix point $x_0 \in M$. Choosing an affine coordinate system around $x_0$, we have
\begin{equation}
\begin{aligned}
\label{e4.3}
L_g\Upsilon&=g^{ij}\partial_i\partial_j(g_0^{kl}g_{kl})\\
&=g^{ij}g_0^{kl}\partial_i\partial_jg_{kl}+2g^{ij}(\partial_i g_0^{kl})(\partial_j g_{kl})+g^{ij}(\partial_i\partial_j g_0^{kl})g_{kl}.
\end{aligned}
\end{equation}
For convenience, we define
\[T_{jl}^k:=\partial_j g_{kl}-\partial_l g_{kj},\]
\[(T_0)_{jl}^k:=\partial_j (g_0)_{kl}-\partial_l (g_0)_{kj}.\]
By \eqref{e1.2}, we obtain
\[\begin{aligned}
\frac{\partial}{\partial t}{T_{jl}^k}&=\frac{\partial}{\partial t}(\partial_j g_{kl}-\partial_l g_{kj})\\
&=\frac{\partial^3\log(\det g)}{\partial x^k \partial x^l \partial x^j}-\frac{\partial^3\log(\det g)}{\partial x^k \partial x^j \partial x^l}\\
&=0.
\end{aligned}\]
It follows that $T_{jl}^k=(T_0)_{jl}^k$ for any $j,k,l$.
Hence
\begin{equation}
\begin{aligned}
\label{e4.4}
g^{ij}g_0^{kl}\partial_i\partial_jg_{kl}&=g^{ij}g_0^{kl}\partial_i(\partial_l g_{kj}+(T_0)_{jl}^k)\\
&=g^{ij}g_0^{kl}\partial_k\partial_l g_{ij}+g^{ij}g_0^{kl}(\partial_l(T_0)_{ik}^j+\partial_i(T_0)_{jl}^k).
\end{aligned}
\end{equation}
By straightforward calculations, we get
\begin{equation}
\label{e4.5}
\begin{aligned}
2g^{ij}(\partial_i g_0^{kl})(\partial_j g_{kl})=-2g_0^{li}g^{jp}g^{qk}g_0^{rl'}(\partial_k (g_0)_{l'i})g_{jr}\partial_l g_{pq}-2g^{qk}\partial_k g_0^{pl}(T_0)_{lq}^p.
\end{aligned}
\end{equation}
Combining \eqref{e4.3}, \eqref{e4.4} and \eqref{e4.5}, we have
\begin{equation}
\label{e4.6}
\begin{aligned}
L_g\Upsilon=&g^{ij}g_0^{kl}\partial_k\partial_l g_{ij}+g^{ij}g_0^{kl}(\partial_l(T_0)_{ik}^j+\partial_i(T_0)_{jl}^k)\\
&-2g^{qk}\partial_k g_0^{pl}(T_0)_{lq}^p-2g_0^{li}g^{jp}g^{qk}g_0^{rl'}(\partial_k (g_0)_{l'i})g_{jr}\partial_l g_{pq}\\
&+g^{ij}(\partial_i\partial_j g_0^{kl})g_{kl}.
\end{aligned}
\end{equation}
On the other hand,
\begin{equation}
\label{e4.7}
\begin{aligned}
\frac{\partial}{\partial t}\Upsilon&=g_0^{lk}\partial_k \partial_l \log\det(g)\\
&=g^{ji}g_0^{kl}\partial_k \partial_l g_{ji}-g^{jp}g^{qi}g_0^{kl}\partial_k g_{ij}\partial_l g_{pq}.
\end{aligned}
\end{equation}
By \eqref{e4.6} and \eqref{e4.7}, we have
\begin{equation}
\label{e4.8}
\begin{aligned}
(\frac{\partial}{\partial t}-L_g)\log\Upsilon&=\frac{1}{{\rm tr}_{g_0}g}\biggl(-g^{jp}g^{qi}g_0^{kl}\partial_k g_{ij}\partial_l g_{pq}+\frac{1}{{\rm tr}_{g_0}g}g^{lk}\partial_k {\rm tr}_{g_0}g\partial_l {\rm tr}_{g_0}g\\
&\;\;\;\;+2g_0^{li}g^{jp}g^{qk}g_0^{rl'}(\partial_k (g_0)_{l'i})g_{jr}\partial_l g_{pq}+2g^{qk}\partial_k g_0^{pl}(T_0)_{lq}^p\biggl)\\
&\;\;\;\;-\frac{1}{{\rm tr}_{g_0}g}\biggl(g^{ij}g_0^{kl}(\partial_l(T_0)_{ik}^j+\partial_i(T_0)_{jl}^k)+g^{ij}(\partial_i\partial_j g_0^{kl})g_{kl}\biggl)\\
&={\rm \uppercase\expandafter{\romannumeral1}}+{\rm \uppercase\expandafter{\romannumeral2}}.
\end{aligned}
\end{equation}

We now establish the estimates for ${\rm \uppercase\expandafter{\romannumeral1}}$. Note that
\[K=g_0^{li}g^{jp}g^{qk}B_{ijk}B_{lpq}\geq 0,\]
where
\[B_{ijk}=\partial_i g_{kj}-g_{ij}\frac{\partial_k {\rm tr}_{g_0}g}{{\rm tr}_{g_0}g}+C_{ijk},\]
and $C_{ijk}$ will be specified later.
Indeed, since the inverse of $g_0$ and $g$ are positive defined, they can be written as $g_0^{li}=M^{la}M^{ia}, g^{jp}=N^{jb}N^{pb}$
and we have $K=\underset{a,b,c}{\sum}(M^{ia}N^{jb}N^{kc}B_{ijk})^2\geq0$.
Next, we find
\[\begin{aligned}
K=&g_0^{li}g^{jp}g^{qk}\partial_i g_{kj}\partial_l g_{pq}+\frac{1}{{\rm tr}_{g_0}g}g^{qk}\partial_k {\rm tr}_{g_0}g\partial_q {\rm tr}_{g_0}g\\
&-2g_0^{li}g^{qk}\partial_i g_{kl}\frac{\partial_q {\rm tr}_{g_0}g}{{\rm tr}_{g_0}g}-2g_0^{li}g^{qk}C_{ijk}\frac{\partial_q {\rm tr}_{g_0}g}{{\rm tr}_{g_0}g}\\
&+g_0^{li}g^{jp}g^{qk}C_{ijk}\partial_l g_{pq}+g_0^{li}g^{jp}g^{qk}C_{ijk}C_{lpq}.
\end{aligned}\]
Using the identity
\[\begin{aligned}
g_0^{li}\partial_i g_{kl}&=g_0^{li}\partial_k g_{il}+g_0^{li}(T_0)_{ik}^l\\
&=\partial_k {\rm tr}_{g_0}g-\partial_k(g_0^{li})g_{il}+g_0^{li}(T_0)_{ik}^l,
\end{aligned}\]
in the third term we get
\[\begin{aligned}
K=&g_0^{li}g^{jp}g^{qk}\partial_i g_{kj}\partial_l g_{pq}-\frac{1}{{\rm tr}_{g_0}g}g^{qk}\partial_k {\rm tr}_{g_0}g\partial_q {\rm tr}_{g_0}g\\
&-2g_0^{li}g^{qk}(T_0)_{ik}^l\frac{\partial_q {\rm tr}_{g_0}g}{{\rm tr}_{g_0}g}-2g^{qk}(C_{ijk}g_0^{ji}-(\partial_k g_0^{li})g_{il})\frac{\partial_q {\rm tr}_{g_0}g}{{\rm tr}_{g_0}g}\\
&+g_0^{li}g^{jp}g^{qk}C_{ijk}\partial_l g_{pq}+g_0^{li}g^{jp}g^{qk}C_{ijk}C_{lpq},
\end{aligned}\]
Comparing the expression of $K$ and ${\rm \uppercase\expandafter{\romannumeral1}}$ we find
\[\begin{aligned}
{\rm \uppercase\expandafter{\romannumeral1}}=&\frac{1}{{\rm tr}_{g_0}g}\biggl(-K-2g_0^{li}g^{qk}(T_0)_{ik}^l\frac{\partial_q {\rm tr}_{g_0}g}{{\rm tr}_{g_0}g}\\
&-2g_0^{ji}g^{qk}[C_{ijk}+g_0^{ll'}(\partial_k (g_0)_{l'i})g_{jl}]\frac{\partial_q {\rm tr}_{g_0}g}{{\rm tr}_{g_0}g}\\
&+2g_0^{li}g^{jp}g^{qk}[C_{ijk}+g_0^{rl'}(\partial_k (g_0)_{l'i})g_{jr}]\partial_l g_{pq}\\
&+2g^{qk}\partial_k g_0^{pl}(T_0)_{lq}^p+g_0^{li}g^{jp}g^{qk}C_{ijk}C_{lpq} \biggl).
\end{aligned}\]
Let
\[C_{ijk}=-g_0^{rl'}(\partial_k (g_0)_{l'i})g_{jr}.\]
We have
\begin{equation}
\label{e4.9}
\begin{aligned}
{\rm \uppercase\expandafter{\romannumeral1}}\leq \frac{1}{{\rm tr}_{g_0}g}\biggl(-2g_0^{li}g^{qk}(T_0)_{ik}^l\frac{\partial_q {\rm tr}_{g_0}g}{{\rm tr}_{g_0}g}+2g^{qk}\partial_k g_0^{pl}(T_0)_{lq}^p+g_0^{li}g^{jp}g^{qk}C_{ijk}C_{lpq}\biggl)
\end{aligned}
\end{equation}
as required.

\end{proof}
Analogous to Lemma 3.3, 3.4 and 3.5 in \cite{r4} where the Chern-Ricci flow on noncompact Hermitian manifolds was studied, we need to prove
the following lemmas in our settings.
\begin{lemma}\label{l4.3} Let $(M^n,g_0)$ be a complete noncompact affine Riemannian manifold satisfying condition \emph{\textbf{(a1)}}: There exists a smooth real function $\rho$ which is uniformly equivalent to the distance function from a fixed point such that $|d\rho|_{g_0}$, $|\nabla d\rho|_{g_0}$ are uniformly bounded. Suppose $g(t)$ is a solution to the geometric flow
\eqref{e1.2} on $M\times [0,S)$. Assume that for any $0<S_1<S$, there is $C>0$ such that
\[C^{-1}g(t)\leq g_0\leq Cg(t)\]
for any $0<t\leq S_1$. Let $f$ be a smooth function on $M\times [0,S)$ which is bounded from above such that
\[(\frac{\partial}{\partial t}-L_g)f\leq 0\]
on $\{f>0\}$. Suppose $f\leq 0$ at $t=0$, then $f\leq 0$ on $M\times [0,S)$.
\end{lemma}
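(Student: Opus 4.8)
The plan is to run a maximum-principle argument using the barrier function $\rho$ supplied by condition \textbf{(a1)}, together with a cutoff/exhaustion to handle noncompactness. Fix $0<S_1<S$ and work on $M\times[0,S_1]$; since $S_1$ is arbitrary it suffices to prove $f\le 0$ there. By hypothesis $C^{-1}g(t)\le g_0\le Cg(t)$ on this slab, so the operator $L_g=\mathrm{tr}_g\nabla d$ is uniformly elliptic relative to $g_0$, and in particular $|L_g\rho|=|g^{ij}\rho_{ij}|\le C|\nabla d\rho|_{g_0}\,\mathrm{tr}_{g_0}g_0\le C'$ on $M\times[0,S_1]$, using that $|\nabla d\rho|_{g_0}$ is uniformly bounded. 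Here I should be slightly careful: $\nabla d\rho$ computed in affine coordinates is exactly the tensor whose $g_0$-norm is assumed bounded, so $g^{ij}\rho_{ij}=\langle g^{-1},\nabla d\rho\rangle$ is controlled by $\|g^{-1}\|_{g_0}\,|\nabla d\rho|_{g_0}\le C'$.

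Next I would introduce, for $\epsilon>0$ and a large constant $A$ to be chosen, the perturbed function
\[
f_\epsilon := f - \epsilon e^{At}(\rho+1).
\]
Because $\rho$ is uniformly equivalent to the distance from a fixed point, $\rho(x)\to\infty$ as $x\to\infty$, while $f$ is bounded from above on $M\times[0,S_1]$; hence $f_\epsilon\to-\infty$ uniformly in $t$ as $x\to\infty$, so $\sup_{M\times[0,S_1]}f_\epsilon$ is attained at some point $(x_0,t_0)$ lying in a compact subset. Suppose for contradiction that this supremum is positive. At $t=0$ we have $f\le 0$, so $f_\epsilon<0$ there; thus $t_0>0$. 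At an interior spatial maximum, $\nabla df_\epsilon\le 0$, hence $L_g f_\epsilon\le 0$, and $\tfrac{\partial}{\partial t}f_\epsilon\ge 0$; therefore $(\tfrac{\partial}{\partial t}-L_g)f_\epsilon\ge 0$ at $(x_0,t_0)$. On the other hand, since $f_\epsilon(x_0,t_0)>0$ and $\epsilon e^{At_0}(\rho+1)>0$, we have $f(x_0,t_0)>0$, so the hypothesis $(\tfrac{\partial}{\partial t}-L_g)f\le 0$ applies at that point. Combining,
\[
0\le \Big(\frac{\partial}{\partial t}-L_g\Big)f_\epsilon
= \Big(\frac{\partial}{\partial t}-L_g\Big)f - \epsilon e^{At}\big(A(\rho+1)-L_g\rho\big)
\le -\epsilon e^{At}\big(A(\rho+1)-C'\big).
\]
Choosing $A> C'$ (recall $\rho\ge 0$ up to shifting, or simply use $\rho+1\ge$ a fixed positive constant after adjusting the additive constant) makes the right side strictly negative, a contradiction. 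Hence $\sup_{M\times[0,S_1]}f_\epsilon\le 0$, i.e. $f\le \epsilon e^{At}(\rho+1)$ everywhere on the slab; letting $\epsilon\to 0$ gives $f\le 0$ on $M\times[0,S_1]$, and since $S_1<S$ was arbitrary, $f\le 0$ on $M\times[0,S)$.

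The only genuinely delicate point is the uniform bound $|L_g\rho|\le C'$: it relies on the assumed equivalence $C^{-1}g(t)\le g_0\le Cg(t)$ to convert the $g$-trace of $\nabla d\rho$ into something controlled by $|\nabla d\rho|_{g_0}$, and on the fact that $\rho$ can be taken (after replacing it by $\rho+$const) to be bounded below by a positive constant so that the $A(\rho+1)$ term dominates. Everything else is the standard Omori–Yau-style barrier argument; no smallness of $S$ or $S_1$ is needed because the bad term in $(\tfrac{\partial}{\partial t}-L_g)f_\epsilon$ is absorbed by the exponential factor $e^{At}$ for $A$ large, uniformly on the compact $t$-interval $[0,S_1]$.
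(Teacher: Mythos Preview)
Your proof is correct and follows essentially the same barrier argument as the paper: the paper subtracts $\epsilon e^{2C_2 t}\rho$ (after normalizing $\rho\ge 1$) where you subtract $\epsilon e^{At}(\rho+1)$, and both obtain the contradiction at the first positive maximum by using the uniform equivalence of $g(t)$ and $g_0$ to bound $L_g\rho$. The two write-ups differ only cosmetically.
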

\begin{proof}We may assume that $g(t)$ is a solution to \eqref{e1.2} on $M\times [0,S)$ and is uniformly equivalent to $g_0$ on $M\times [0,S)$. Let $r(\cdot)$ be the distance
function from a fixed point $x_0$.  We may assume that there is a constant $C_1>0$ such that
\[C_1^{-1}(1+r(x))\leq \rho(x)\leq C_1(1+r(x)).\]
and $\rho\geq 1$. Since $g(t)$ is uniformly equivalent to $g_0$, there is a constant $C_2>0$ such that $L_g \rho\leq C_2$. Hence
\[(\frac{\partial}{\partial t}-L_g)(e^{2C_2t}\rho)\geq e^{2C_2t}(2C_2\rho-C_2)\geq C_2e^{2C_2t}\rho\]
For any $\epsilon>0$, if the $\sup_{M\times [0,T]}(f-\epsilon e^{2C_2 t}\rho)>0$, then there is a point $(x_0,t_0)$ with $t_0>0$ such that $f-\epsilon e^{2C_2 t}\rho\leq 0$ on $M\times[0,t_0]$ and $f-\epsilon e^{2C_2 t}\rho=0$ at $(x_0,t_0)$. Thus, $f(x_0,t_0)>0$, and at $(x_0,t_0)$ we get
\[0\leq (\frac{\partial}{\partial t}-L_g)(f-\epsilon e^{2C_2 t}\rho)<0,\]
which is impossible. Since $\epsilon$ is arbitrary, the lemma is proved.

\end{proof}

Assume that $g_0$ satisfies \textbf{(a1)}, \textbf{(a2)} and \textbf{(a3)} (in Page 7).
Let $u$ and $S$ be defined in \textbf{(a2)}. Suppose that $g(t)$ is a solution to the geometric flow \eqref{e1.2} on $M\times[0,S_1]$
by choosing $S_1<S$ such that $g(t)$ is uniformly equivalent to $g_0$ on $M\times[0,S_1]$.
Let $\varphi$ be the corresponding solution to \eqref{e1.3}. 
\begin{lemma}\label{l4.4} Suppose that $g_0$ satisfies \emph{\textbf{(a1)}}, \emph{\textbf{(a2)}} and \emph{\textbf{(a3)}}. Then there is a constant $c(n)>0$ depending only on $n$ such that for $t\leq S_1$,\\
\hspace*{0.5cm}\emph{(i)} $\varphi\leq (\log(1+c(n)KS_1)^n+1)t$\\
\hspace*{0.4cm}\emph{(ii)} $\dot{\varphi}\leq (\log(1+c(n)KS_1)^n+1)+n.$\\
\hspace*{0.3cm}\emph{(iii)}
\[\dot{\varphi}(x,t)\geq \frac{1}{S-S_1}\biggl[\underset{M}{\inf}\;u-\underset{M}{\sup}\;u-(\log(1+c(n)KS_1)^n+1+n)t\biggl],\]
\hspace*{1.1cm}and
\[\varphi(x,t)\geq \frac{1}{S-S_1}\int_0^t\biggl[\underset{M}{\inf}\;u-\underset{M}{\sup}\;u-(\log(1+c(n)KS_1)^n+1+n)s\biggl]ds.\]
where $u$ is the smooth bounded function defined in \emph{\textbf{(a2)}}.
\end{lemma}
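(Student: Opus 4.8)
The plan is to establish \textbf{(i)} by a barrier maximum-principle argument and then deduce \textbf{(ii)} and \textbf{(iii)} from it, applying Lemma \ref{l4.3} to the auxiliary quantities of Lemma \ref{l4.1}. Two facts will be used throughout. First, since $g(t)$ is uniformly equivalent to $g_0$ on $M\times[0,S_1]$, the functions $\dot{\varphi}=\log\det(g_0^{-1}g(t))$ and $\varphi(x,t)=\int_0^t\dot{\varphi}(x,s)\,ds$ are bounded on $M\times[0,S_1]$ (by constants that may depend on $S_1$ — this is harmless), and $|L_g\rho|$ is bounded for the function $\rho$ of \textbf{(a1)}; this is exactly the setting in which Lemma \ref{l4.3} and barrier arguments built from $\rho$ apply. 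Second, the only consequence of \textbf{(a3)} needed here is the scalar bound $\mathrm{tr}_{g_0}\beta(g_0)\ge -c(n)K$ on $M$: writing $\beta_{ij}(g_0)=g_0^{ka}g_0^{lb}\partial_i(g_0)_{ab}\partial_j(g_0)_{kl}-g_0^{kl}\partial_i\partial_j(g_0)_{kl}$ in the coordinates of \textbf{(a3)} at a point $p$, the $g_0$-trace of the first term is a sum of squares of the $\partial_i(g_0)_{kl}(p)$, hence $\ge 0$, while the $g_0$-trace of the second is bounded below by $-n^2K$ since each $\partial_l\partial_k(g_0)_{ij}(p)$ is at most $K$.

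For \textbf{(i)}, set $C=\log(1+c(n)KS_1)^n+1$ and, for $\epsilon>0$, consider $h_\epsilon=\varphi-Ct-\epsilon\rho$. Since $\varphi$ is bounded above and $\rho$ is proper (being comparable to the distance function), $h_\epsilon\to-\infty$ in the space variable, so if $\sup_{M\times[0,S_1]}h_\epsilon>0$ it is attained at some $(x_0,t_0)$ with $t_0>0$. There $\partial_t h_\epsilon\ge 0$ forces $\dot{\varphi}(x_0,t_0)\ge C$, and $\nabla d h_\epsilon\le 0$ forces $\nabla d\varphi\le\epsilon\nabla d\rho\le\epsilon\Lambda_0 g_0$ as bilinear forms, where $\Lambda_0$ bounds $|\nabla d\rho|_{g_0}$. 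Hence $g(t_0)=g_0-t_0\beta(g_0)+\nabla d\varphi\le(1+\epsilon\Lambda_0)g_0-t_0\beta(g_0)$, and applying $\det A\le(\mathrm{tr}(A)/n)^n$ to the positive, $g_0$-self-adjoint operator $g_0^{-1}\big((1+\epsilon\Lambda_0)g_0-t_0\beta(g_0)\big)$, together with $\mathrm{tr}_{g_0}\beta(g_0)\ge -c(n)K$, yields $\dot{\varphi}(x_0,t_0)\le n\log(1+\epsilon\Lambda_0+c(n)KS_1)$. For $\epsilon$ small enough this contradicts $\dot{\varphi}(x_0,t_0)\ge C$; therefore $\sup h_\epsilon\le 0$, i.e. $\varphi\le Ct+\epsilon\rho$ on $M\times[0,S_1]$, and letting $\epsilon\to 0$ gives $\varphi\le Ct$, which is \textbf{(i)}.

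For \textbf{(ii)}, note that $\Psi=t\dot{\varphi}-\varphi-nt$ satisfies $(\partial_t-L_g)\Psi=-\mathrm{tr}_g g_0\le 0$ everywhere on $M\times[0,S_1]$ by Lemma \ref{l4.1}, with $\Psi|_{t=0}=0$ and $\Psi$ bounded above; Lemma \ref{l4.3} then gives $\Psi\le 0$, i.e. $t\dot{\varphi}\le\varphi+nt$. Combined with \textbf{(i)} this gives $t\dot{\varphi}\le(C+n)t$, so, dividing by $t$ (the case $t=0$ being trivial), $\dot{\varphi}\le C+n=\log(1+c(n)KS_1)^n+1+n$, which is \textbf{(ii)}.

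Finally, for \textbf{(iii)}, apply Lemma \ref{l4.1} with its free parameter taken equal to the number $S$ from \textbf{(a2)}, so that $\Lambda=(S-t)\dot{\varphi}+\varphi+nt$ satisfies $(\partial_t-L_g)\Lambda=\mathrm{tr}_g(g_0-S\beta(g_0))$. Since $g_0-S\beta(g_0)\ge\theta g_0-\nabla du$ by \textbf{(a2)}, one computes $(\partial_t-L_g)(\Lambda-u)\ge\theta\,\mathrm{tr}_g g_0>0$; hence $\Xi:=\Lambda-u$ is a strict subsolution, is bounded below, and satisfies $\Xi|_{t=0}=-u\ge-\sup_M u$, so Lemma \ref{l4.3} applied to $(-\sup_M u)-\Xi$ gives $\Xi\ge-\sup_M u$, i.e. $(S-t)\dot{\varphi}+\varphi+nt\ge\inf_M u-\sup_M u$. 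Using $\varphi\le Ct$ from \textbf{(i)} this yields $(S-t)\dot{\varphi}\ge\inf_M u-\sup_M u-(C+n)t$, a nonpositive quantity, so dividing by $S-t$ and using $S-t\ge S-S_1>0$ produces $\dot{\varphi}\ge\frac{1}{S-S_1}\big[\inf_M u-\sup_M u-(C+n)t\big]$ with $C+n=\log(1+c(n)KS_1)^n+1+n$; integrating this inequality in $t$ gives the stated lower bound for $\varphi$. The main technical obstacle is step \textbf{(i)}: controlling the noncompactness — keeping the perturbed maximum in a compact set and checking that the constants survive the limit $\epsilon\to 0$ — and verifying that \textbf{(a3)} yields the dimensional constant $c(n)$ in precisely the stated form; the deductions of \textbf{(ii)} and \textbf{(iii)} are then direct consequences of Lemmas \ref{l4.1} and \ref{l4.3}.
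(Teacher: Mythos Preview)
Your proof is correct and follows essentially the same barrier/maximum-principle scheme as the paper: part (i) via a perturbed supremum argument with $\varphi-At-\epsilon\rho$, then (ii) and (iii) by applying Lemma~\ref{l4.3} to the quantities $\Psi$ and $\Lambda-u$ from Lemma~\ref{l4.1}. The one noteworthy variation is in (i): the paper asserts the full tensor inequality $-\beta(g_0)\le c(n)Kg_0$ from \textbf{(a3)} and bounds $\det(g_0-t_0\beta(g_0)+\epsilon\nabla d\rho)$ directly, whereas you extract only the scalar bound $\mathrm{tr}_{g_0}(-\beta(g_0))\le c(n)K$ and then appeal to $\det A\le(\mathrm{tr}A/n)^n$; since \textbf{(a3)} gives only an \emph{upper} bound on $\partial_l\partial_k(g_0)_{ij}(p)$, your trace route is arguably the more transparent way to reach the stated constant.
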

\begin{proof}
As Lemma 3.4 of \cite{r4} (see \cite{r3} also),
for $\epsilon>0$, set $H=\varphi-At-\epsilon\rho$ where $A>0$ is a constant to be determined.
We find $H<0$ on $(M\setminus\Omega\times[0,S_1])$ for some compact set $\Omega$.
Therefore, provided $\sup_{M\times[0,S_1]}H>0$, there exists $(x_0,t_0)\in \Omega\times[0,S_1]$ with $t_0>0$ such that $H(x_0,t_0)=\sup_{M\times[0,S_1]}H$.
At $(x_0,t_0)$, we have $(\nabla d\varphi-\epsilon \nabla d\rho)\leq 0$, and
\[0\leq \frac{\partial}{\partial t}H=\log \frac{\det ({{g}_{0}}-t_0\beta ({{g}_{0}})+\nabla d\varphi )}{\det ({{g}_{0}})}-A.\]
Choose local coordinates around $x_0$ as in \textbf{(a3)}, we have
\[\begin{aligned}
-\beta (g_0)&=\frac{\partial^2 \log \det[g_0]}{\partial x^i\partial x^j}\\
&=\partial_j(g_0^{pq}\partial_i (g_0)_{pq})\\
&=g_0^{pq}\partial_j\partial_i (g_0)_{pq}-g_0^{pp'}(\partial_j (g_0)_{p'q'})g_0^{q'q}\partial_i (g_0)_{pq}\\
&=\partial_j\partial_i (g_0)_{pp}-(\partial_j (g_0)_{pq})\partial_i (g_0)_{pq}\\
&\leq c(n)Kg_0,
\end{aligned}\]
and then
\[\begin{aligned}
{{g}_{0}}-t_0\beta ({{g}_{0}})+\nabla d\varphi&\leq {{g}_{0}}-t_0\beta ({{g}_{0}})+\epsilon \nabla d\rho\\
&\leq (1+c(n)Kt_0+\epsilon C_1)g_0
\end{aligned}\]
for some positive constant $c(n)$ depending only on $n$ and a positive constant $C_1$ such that $\nabla d\rho\leq C_1g_0$. Here we have used the fact that $g_0$ satisfies \textbf{(a3)} and $|\nabla d\rho|_{g_0}$ is bounded.
Fixing $A=\log(1+c(n)KS_1)^n+1$, we get a contradiction when $\epsilon$ is small enough.
Therefore, $H\leq 0$ for any $\epsilon>0$. Letting $\epsilon\rightarrow 0$, we (i) follows.

(ii) By Lemmas \ref{l4.1}, \ref{l4.3} and the fact that $\varphi=0$ at $t=0$, we conclude that $t\dot{\varphi}-\varphi-nt\leq0$.
Thus, (ii) is proved by (i).

(iii) Let $\Lambda=(S -t)\dot{\varphi}+\varphi+nt$. By Lemma \ref{l4.1}, we have
\[\begin{aligned}
(\frac{\partial}{\partial t}-L_g)(\Lambda-u)&=-S {\rm tr}_g(\beta(g_0))+{\rm tr}_g g_0+L_g u\\
&\geq -{\rm tr}_g(g_0+\nabla du)+{\rm tr}_gg_0+L_g u\\
&=0.
\end{aligned}\]
Here we have used \textbf{(a2)}. Since $\Lambda$ and $u$ are bounded, we conclude by Lemma \ref{l4.3} that
\[\Lambda-u\geq \underset{M,t=0}{\inf}(\Lambda-u)=-\underset{M}{\sup}\;u.\]
Therefore, (iii) is valid by (i).
\end{proof}
\begin{lemma}\label{l4.5} Let $g_0$ be as in Lemma \ref{l4.4}. Then there are constants $c_1(n)$, $c_2(n)$ depending only on $n$ such that for $t\leq S_1<S_2<S$,
the solution $g(t)$ of \eqref{e1.2} on $M\times[0,S_1]$ which is uniformly equivalent to $g_0$, satisfies
\[{\rm tr}_{g_0}g\leq \exp\biggl[\log\biggl(\frac{1}{2}c_1K+\frac{1}{2}(c_1^2K^2+4c_2K^2A(1+2\mathfrak{m})^3)^{\frac{1}{2}}\biggl)+A\biggl]+n\]
on $M\times[0,S_1]$, where
\[A=\alpha^{-1}(2\mathfrak{m}+1)^2(c_1K+1) \]
and $\alpha=1-\frac{S_2}{S}$ and $\mathfrak{m}=\sup_{M\times[0,S_1]}|(S_2 -t)\dot{\varphi}+\varphi+nt-\frac{S_2}{S}u|$.
\end{lemma}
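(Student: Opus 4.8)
The plan is to prove a parabolic Aubin--Yau trace estimate by the maximum principle, using as a barrier the quantity $\Lambda$ of Lemma~\ref{l4.1} with $S_1$ replaced by $S_2$, together with the function $u$ of \textbf{(a2)}. Abbreviate $\Lambda_2=(S_2-t)\dot\varphi+\varphi+nt$, and recall $\Upsilon={\rm tr}_{g_0}g$, $\Theta={\rm tr}_gg_0$ from Lemma~\ref{l4.2}. The first step is the observation that, by Lemma~\ref{l4.1} together with \textbf{(a2)} applied after scaling $g_0-S\beta(g_0)+\nabla du\geq\theta g_0$ by $S_2/S$,
\[
\Big(\frac{\partial}{\partial t}-L_g\Big)\Big(\Lambda_2-\frac{S_2}{S}u\Big)=\frac{S_2}{S}\,{\rm tr}_g\big(g_0-S\beta(g_0)+\nabla du\big)+\Big(1-\frac{S_2}{S}\Big)\Theta\geq\alpha\,\Theta,
\]
with $\alpha=1-S_2/S$, while $\big|\Lambda_2-\tfrac{S_2}{S}u\big|\leq\mathfrak m$ by the definition of $\mathfrak m$ (finite by Lemma~\ref{l4.4}). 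I would therefore study
\[
\Phi=\log\Upsilon-A\psi,\qquad\psi=\frac{1}{2\mathfrak m+1}\Big(\Lambda_2-\frac{S_2}{S}u+\mathfrak m\Big)\in[0,1),
\]
where $A>0$ is to be chosen; note $\Phi$ is a priori bounded since $g(t)$ is assumed uniformly equivalent to $g_0$, and $\big(\tfrac{\partial}{\partial t}-L_g\big)(-A\psi)\leq-\tfrac{A\alpha}{2\mathfrak m+1}\,\Theta$.

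The core of the proof is to estimate $\big(\tfrac{\partial}{\partial t}-L_g\big)\log\Upsilon$ from Lemma~\ref{l4.2}, using \textbf{(a3)}. At a point $x_0$, in the affine coordinates provided by \textbf{(a3)} (so that $(g_0)_{ij}=\delta_{ij}$, $|\partial(g_0)|\leq K$, $|\partial(T_0)|\leq K$, and the second derivatives of $g_0$ are bounded \emph{from above} by $K$), I would: retain the favourable term $-K/\Upsilon$ with $K=g_0^{li}g^{jp}g^{qk}B_{ijk}B_{lpq}\geq0$ from Lemma~\ref{l4.2} and use it, together with the first-order condition at $x_0$ and \textbf{(a1)}, to absorb by Cauchy--Schwarz the residual linear term $-\tfrac2\Upsilon g_0^{li}g^{qk}(T_0)_{ik}^l\tfrac{\partial_q\Upsilon}{\Upsilon}$; bound $\tfrac1\Upsilon g_0^{li}g^{jp}g^{qk}C_{ijk}C_{lpq}$ by a multiple of $K^2\Theta$, using that each $C_{ijk}$ contributes one factor of $g$ and the remaining contraction one factor of $g^{-1}$; and bound $-\tfrac1\Upsilon g^{ij}(\partial_i\partial_jg_0^{kl})g_{kl}$ (the last term of ${\rm II}$) from above --- the delicate step --- which succeeds precisely because the one-sided bound on $\partial\partial(g_0)$ forces $\partial_i\partial_jg_0^{kl}\geq-cK^2$, the sign needed. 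The outcome, at the maximum point and after expressing $\nabla\Upsilon/\Upsilon$ through the first-order condition and bounding $|\nabla\psi|_g$ in terms of $\mathfrak m$ and $\Theta$, is a bound of the form $\big(\tfrac{\partial}{\partial t}-L_g\big)\log\Upsilon\leq\tfrac{c_1K\Theta}{\Upsilon}+\tfrac{c_2K^2A(1+2\mathfrak m)^3\Theta}{\Upsilon^2}$, the factor $A(1+2\mathfrak m)^3$ arising from the square of the gradient of $A\psi$.

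The final step is the maximum principle, applied not to $\Phi$ directly but to $\Phi-\epsilon e^{ct}\rho$ with $\rho$ as in \textbf{(a1)} and $c$ large; since $g(t)$ is uniformly equivalent to $g_0$, $L_g\rho$ is bounded (as in Lemma~\ref{l4.3}), so this function attains its maximum over $M\times[0,S_1]$ at some $(x_0,t_0)$. If $t_0=0$, then $\Upsilon=n$ there. Otherwise $\big(\tfrac{\partial}{\partial t}-L_g\big)(\Phi-\epsilon e^{ct}\rho)\geq0$ at $(x_0,t_0)$, hence $\big(\tfrac{\partial}{\partial t}-L_g\big)\log\Upsilon\geq\tfrac{A\alpha}{2\mathfrak m+1}\Theta$ there; inserting the bound above, dividing by $\Theta>0$, multiplying by $\Upsilon^2$ and letting $\epsilon\to0$ gives, for $A=\alpha^{-1}(2\mathfrak m+1)^2(c_1K+1)$ (which makes $\tfrac{A\alpha}{2\mathfrak m+1}-c_1K\geq1$), the quadratic inequality $\Upsilon(x_0,t_0)^2\leq c_1K\,\Upsilon(x_0,t_0)+c_2K^2A(1+2\mathfrak m)^3$, hence $\Upsilon(x_0,t_0)\leq\tfrac12c_1K+\tfrac12\big(c_1^2K^2+4c_2K^2A(1+2\mathfrak m)^3\big)^{1/2}$. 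Undoing the barrier via $\log\Upsilon(x,t)\leq\log\Upsilon(x_0,t_0)+A\big(\psi(x,t)-\psi(x_0,t_0)\big)\leq\log\Upsilon(x_0,t_0)+A$, and combining with the alternative $t_0=0$, yields the stated estimate. The main obstacle is the middle step: controlling the error terms of Lemma~\ref{l4.2} with no $C^1$ estimate in hand --- in particular handling the remaining gradient term through the retained $-K/\Upsilon$ and the first-order condition rather than through an a priori gradient bound --- and exploiting the \emph{one-sided} upper bound on $\partial\partial(g_0)$, which is exactly why \textbf{(a3)} is phrased with a one-sided condition there; the precise bookkeeping of the powers of $\Upsilon$, $\mathfrak m$ and $A$ in the differential inequality is what produces the quadratic and hence the explicit constant.
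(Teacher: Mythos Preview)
Your overall strategy---use Lemma~\ref{l4.1} with $S_1$ replaced by $S_2$ together with \textbf{(a2)} to manufacture a barrier that produces a good $\alpha\Theta$ term, then run the maximum principle on $\log\Upsilon$ plus this barrier with an $\epsilon$-weighted $\rho$ to localize---matches the paper. The gap is in the \emph{form} of the barrier. You take the linear combination $\Phi=\log\Upsilon-A\psi$; the paper instead uses the convex barrier
\[
Q=\log\Upsilon+A\phi^{-1}-\epsilon P,\qquad \phi=\Lambda_2-\tfrac{S_2}{S}u+\mathfrak m+1\in[1,2\mathfrak m+1],
\]
and this nonlinearity is not cosmetic. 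Computing $(\partial_t-L_g)(A\phi^{-1})$ yields, besides the $-A\alpha\phi^{-2}\Theta$ term you also have, the crucial \emph{good} gradient term $-2A\phi^{-3}g^{ij}\partial_i\phi\partial_j\phi$. At the maximum point the first-order condition reads $\Upsilon^{-1}\partial_q\Upsilon=A\phi^{-2}\partial_q\phi+\epsilon\partial_qP$, so the residual torsion term $-2\Upsilon^{-2}g^{qk}g_0^{li}(T_0)^l_{ik}\partial_q\Upsilon$ from Lemma~\ref{l4.2} becomes (up to $\epsilon$) a cross-term linear in $A\partial_q\phi$. The paper absorbs it by Cauchy--Schwarz into $2A\phi^{-3}|\nabla\phi|_g^2+c_5K^2A\phi^3\Upsilon^{-2}\Theta$; the first piece is swallowed by the good term above, and the second produces exactly the $c_2K^2A(1+2\mathfrak m)^3$ in the final quadratic.

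With your linear barrier $-A\psi$ there is no $-|\nabla\psi|_g^2$ good term anywhere: $(\partial_t-L_g)(-A\psi)$ is linear in $\psi$. Retaining $-K/\Upsilon$ from the proof of Lemma~\ref{l4.2} does not help, because the residual $(T_0)$-term is precisely what remains \emph{after} the $K$-completion of squares; the $(\partial\log\Upsilon)^2$ content of $K$ merely cancels the $\Upsilon^{-2}g^{lk}\partial_k\Upsilon\partial_l\Upsilon$ term already present in I, it does not furnish an independent negative $|\nabla\psi|_g^2$. Your sentence ``bounding $|\nabla\psi|_g$ in terms of $\mathfrak m$ and $\Theta$'' is where the argument breaks: $\psi$ involves $\dot\varphi$ and $\varphi$, for which no gradient bound is available at this stage. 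Replace $-A\psi$ by $+A\phi^{-1}$ (equivalently, by any smooth strictly convex function of $\psi$) and your outline goes through essentially as written.
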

\begin{proof}
Similar to \cite{r4} and \cite{r3}, let $\Upsilon={\rm tr}_{g_0}g$ and $\Theta={\rm tr}_g g_0$.
We first estimate $(\frac{\partial}{\partial t}-L_g)\log \Upsilon$. In the proof of this lemma, we will denote a positive constant depending only on $n$ by $c_i$. By Lemma \ref{l4.2} we have
\[{\rm \uppercase\expandafter{\romannumeral1}}\leq \frac{1}{{\rm tr}_{g_0}g}\biggl(-2g_0^{li}g^{qk}(T_0)_{ik}^l\frac{\partial_q {\rm tr}_{g_0}g}{{\rm tr}_{g_0}g}+2g^{qk}\partial_k g_0^{pl}(T_0)_{lq}^p+g_0^{li}g^{jp}g^{qk}C_{ijk}C_{lpq}\biggl).\]
\[\begin{aligned}
{\rm \uppercase\expandafter{\romannumeral2}}=-\frac{1}{{\rm tr}_{g_0}g}\biggl(g^{ij}g_0^{kl}(\partial_l(T_0)_{ik}^j+\partial_i(T_0)_{jl}^k)+g^{ij}(\partial_i\partial_j g_0^{kl})g_{kl}\biggl).
\end{aligned}\]
In a local coordinates satisfying \textbf{(a3)}, we get
\[2g^{qk}\partial_k g_0^{pl}(T_0)_{lq}^p\leq \frac{1}{2}c_3 K \Theta,\]
\[-g^{ij}g_0^{kl}(\partial_l(T_0)_{ik}^j+\partial_i(T_0)_{jl}^k)\leq \frac{1}{2}c_3 K \Theta,\]
\[-g^{ij}(\partial_i\partial_j g_0^{kl})g_{kl}\leq \frac{1}{2}c_3 K\Upsilon\Theta,\]
and
\[\begin{aligned}
\frac{1}{{\rm tr}_{g_0}g}g_0^{li}g^{jp}g^{qk}C_{ijk}C_{lpq}\leq \frac{1}{2}c_3 K \Theta.
\end{aligned}\]
Hence
\begin{equation}
(\frac{\partial}{\partial t}-L_g)\log \Upsilon\leq 2\Upsilon^{-2}g^{qk}g_0^{li}(T_0)_{ki}^l\partial_q {\rm tr}_{g_0}g+c_3(K+\Upsilon^{-1}K)\Theta
\end{equation}
Fix $S_2<S$ and let $\Lambda=(S_2 -t)\dot{\varphi}+\varphi+nt$ and $u$ the function as in \textbf{(a2)}. Thus, by Lemma \ref{l4.1},
\[\begin{aligned}
(\frac{\partial}{\partial t}-L_g)(\Lambda-\frac{S_2}{S}u)&=-S_2 {\rm tr}_g(\beta(g_0))+\frac{S_2}{S}L_g u+\Theta\\
&\geq -\frac{S_2}{S}{\rm tr}_g(g_0+\nabla du)+\frac{S_2}{S}L_g u+\Theta\\
&=(1-\frac{S_2}{S})\Theta.
\end{aligned}\]
Let $\mathfrak{m}=\sup_M|\Lambda-\frac{S_2}{S}u|$ and $\phi=\Lambda-\frac{S_2}{S}u+\mathfrak{m}+1\geq 1$.
By the proof of Lemma \ref{l4.3}, there is a constant $C_1>0$ such that if $P=e^{2C_1t}\rho$, then
\[(\frac{\partial}{\partial t}-L_g)P\geq 0.\]
Finally, let
\[Q=\log \Upsilon+A\phi^{-1}-\epsilon P\]
where $A\geq 1$ is a constant to be determined later. We have
\begin{equation}\label{e4.11}
\begin{aligned}
(\frac{\partial}{\partial t}-L_g)Q&\leq 2\Upsilon^{-2}g^{qk}g_0^{li}(T_0)_{ki}^l\partial_q \Upsilon+c_3(K+\Upsilon^{-1}K)\Theta\\
&\;\;\;\;-A\phi^{-2}(\frac{\partial}{\partial t}-L_g)\phi-2A\phi^{-3}g^{ij}\partial_i\phi \partial_j\phi\\
&\leq 2\Upsilon^{-2}g^{qk}g_0^{li}(T_0)_{ki}^l\partial_q \Upsilon+c_3(K+\Upsilon^{-1}K)\Theta\\
&\;\;\;\;-A\alpha \phi^{-2}\Theta-2A\phi^{-3}g^{ij}\partial_i\phi \partial_j\phi
\end{aligned}
\end{equation}
where $\alpha=1-\frac{S_2}{S}>0$. Since $Q<0$ outside $\Omega\times[0,S_1]$ for a compact set $\Omega$ and $Q$ is bounded from above, there is a point $(x_0,t_0)$ with $x_0 \in \Omega$ such that $Q(x_0,t_0)=\sup_{M\times [0,S_1]}Q$. If $t_0=0$, we find
\begin{equation}
\sup_{M\times [0,S_1]}Q\leq A+\log n
\end{equation}
since $\phi\geq1$. For the case $t_0>0$, we have, at $(x_0,t_0)$,
\[\Upsilon^{-1}\partial_q \Upsilon=A\phi^{-2}\partial_q \phi+\epsilon\partial_qP.\]
We may assume $(g_0)_{ij} (x_0) =\delta_{ij}$ and $g_{ij} (x_0) =\lambda_i \delta_{ij}$ by a rotation if necessary. At $(x_0, t_0)$ we have
\[\begin{aligned}
2\Upsilon^{-2}g^{qk}g_0^{li}(T_0)_{ki}^l\partial_q \Upsilon&=2\Upsilon^{-1}g^{qk}g_0^{li}(T_0)_{ki}^l(A\phi^{-2}\partial_q \phi+\epsilon\partial_qP)\\
&\leq c_4K \Upsilon^{-1}\biggl(\underset{q}{\sum}\lambda_q^{-1}(A|\partial_q \phi|+\epsilon|\partial_qP|)\biggl)\\
&\leq c_4K \Upsilon^{-1}A\sqrt{(\underset{q}{\sum}\lambda_q^{-1})(\underset{q}{\sum}\lambda_q^{-1}|\partial_q \phi|^2)}+\epsilon C_2\Theta\\
&\leq 2A\phi^{-3}\underset{q}{\sum}\lambda_q^{-1}|\partial_q\phi|^2+c_5K^2A\Upsilon^{-2}\phi^3\underset{q}{\sum}\lambda_q^{-1}+\epsilon C_2\Theta\\
&=2A\phi^{-3}g^{ij}\partial_i\phi \partial_j\phi+c_5K^2A\Upsilon^{-2}\phi^3\Theta+\epsilon C_2\Theta
\end{aligned}\]
for some constant $C_2$ independent of $\epsilon$ and some constants $c_4,c_5>0$ depending only on $n$. By \eqref{e4.11}, we have, at $(x_0,t_0)$,
\[\begin{aligned}
0&\leq (\frac{\partial}{\partial t}-L_g)Q\\
&\leq c_3(K+\Upsilon^{-1}K)\Theta-A\alpha \phi^{-2}\Theta + c_5K^2A\Upsilon^{-2}\phi^3\Theta+\epsilon C_2\Theta.
\end{aligned}\]
Hence
\[0\leq c_5K^2A\Upsilon^{-2}\phi^3+c_3K\Upsilon^{-1}+(c_3K+\epsilon C_2-A\alpha \phi^{-2}).\]
Let $A=\alpha^{-1}(2\mathfrak{m}+1)^2(c_3K+\epsilon C_2+1)$, we find
\[0\leq c_5K^2A\Upsilon^{-2}\phi^3+c_3K\Upsilon^{-1}-1.\]
Since $\Upsilon^{-1}>0$ and $1\leq\phi\leq 1+2\mathfrak{m}$, we have
\[\Upsilon^{-1}\geq \frac{-c_3K+(c_3^2K^2+4c_5K^2A\phi^3)^{\frac{1}{2}}}{2c_5K^2A\phi^3}.\]
Therefore,
\[\Upsilon\leq \frac{1}{2}\biggl(c_3K+(c_3^2K^2+4c_5K^2A(1+2\mathfrak{m})^3)^{\frac{1}{2}}\biggl).\]
It follows that on $M\times[0,S_1]$,
\[Q\leq \log\biggl(\frac{1}{2}c_3K+\frac{1}{2}(c_3^2K^2+4c_5K^2A(1+2\mathfrak{m})^3)^{\frac{1}{2}}\biggl)+A.\]
As $\epsilon$ is arbitrary, the result follows.

\end{proof}
In the subsequent part of this section, we establish local derivative estimates for solutions of \eqref{e1.2} assuming local uniform bounds on the metric,
which can be seen as a real version of Chern-Ricci flow due to Sherman-Weinkove \cite{r5}. Since our estimates are local, we may work in a small open subset of $\mathbb{R}^n$.
Let $B_r$ denote the ball of radius $r$ centered at the origin in $\mathbb{R}^n$. Assume that $r$ is sufficiently small such that $\overline{B_r}$ is contained in a coordinate chart.
\begin{lemma}\label{l4.6} Fix $0<\varepsilon<2\varepsilon <T$, $0<r<\sqrt{\varepsilon}$,. Let $g(t)$ solve the geometric flow \eqref{e1.2} in a neighborhood of $B_r$ for $t\in [0,T)$. Assume $N>1$ satisfies
\[\frac{1}{N}g_0\leq g(t)\leq Ng_0\;\;\;\; on\; B_r\times[0,T).\]
Then there exist $C>0$, $0<\alpha<1$ depending only on $g_0,n,N,\varepsilon$ such that

\hspace*{0.2cm}\emph{(i)}
\begin{minipage}[t]{0.91\linewidth}
$[g_{ij}]_{\alpha/2,\alpha}\leq Cr^{-\alpha}$ on $B_{r/2}\times [\varepsilon,T)$.
\end{minipage}

\hspace*{0.1cm}\emph{(ii)}
\begin{minipage}[t]{0.91\linewidth}
$\|\varphi(t)\|_{C_k}\leq C_k$ on $B_{r/4}\times [2\varepsilon,T)$ for each $k\in\mathbb{N}$, where $C_k$ depends only on $g_0,n,N,\varepsilon,r$.
\end{minipage}
\end{lemma}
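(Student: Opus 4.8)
The plan is to reduce the whole estimate to a single scalar parabolic Monge--Amp\`ere equation and then run, in order, an interior parabolic Evans--Krylov estimate for part (i) and a differentiation-plus-Schauder bootstrap for part (ii); this is the real analogue of Sherman--Weinkove \cite{r5}. Working in the fixed affine chart containing $\overline{B_r}$, the convenient potential is not $\varphi$ but
\[
H(x,t):=\int_0^t \log\det g(x,s)\,ds .
\]
Differentiating under the integral and using $\partial_s g_{ij}=-\beta_{ij}(g)=\partial_i\partial_j\log\det g$ gives $\partial_i\partial_j H=g_{ij}-(g_0)_{ij}$, i.e.\ $\nabla dH+g_0=g$, while $\partial_t H=\log\det g$; hence $H$ solves
\[
\partial_t H=\log\det\bigl(\nabla dH+g_0\bigr),
\]
a uniformly parabolic equation whose operator is \emph{concave} in the Hessian variable, whose ``background'' is the smooth \emph{$t$-independent} tensor $g_0$, and whose inhomogeneity $\log\det g_0$ is bounded on $\overline{B_r}$. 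From $\tfrac1N g_0\le g\le N g_0$ one reads off that the ellipticity constants depend only on $N$ and the fixed bounds on $g_0^{\pm1}$ over $\overline{B_r}$, that $\nabla dH=g-g_0$ and $\partial_t H=\log\det g$ are bounded there \emph{independently of $T$}, and that $g=\nabla dH+g_0$, so H\"older control of $\nabla dH$ passes verbatim to $g$. I would also modify the operator outside the bounded region swept out by $\nabla dH$ so that it becomes globally uniformly elliptic and concave; this is standard.

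For part (i), fix $(x_0,t_0)\in B_{r/2}\times[\varepsilon,T)$. The backward parabolic cylinder $Q:=B_{r/2}(x_0)\times\bigl(t_0-(r/2)^2,t_0\bigr]$ lies inside $B_r\times(0,T)$, since $x_0\in B_{r/2}$ and, crucially, $t_0\ge\varepsilon>r^2$ --- exactly the reason $r<\sqrt\varepsilon$ is assumed. I would rescale $Q$ parabolically to a unit cylinder, scaling $x$ by $r$, $t$ by $r^2$ and $H$ by $r^{-2}$, which preserves the flow and the structure of the equation while keeping $\nabla d\tilde H$, $\partial_t\tilde H$ and the rescaled background uniformly controlled in terms of $N,g_0$; after subtracting from $\tilde H$ the $t$-independent affine function formed by its constant and linear parts at the center of the cylinder --- which changes neither the equation nor $\nabla d\tilde H$ --- the $L^\infty$ norm of $\tilde H$ is also controlled. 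The interior parabolic Evans--Krylov estimate then gives $\|\tilde H\|_{C^{2+\alpha,1+\alpha/2}}\le C$ on the smaller cylinder, $C$ and $\alpha\in(0,1)$ depending only on $g_0,n,N$. Undoing the scaling contributes the factor $r^{-\alpha}$, so $[\nabla dH]_{\alpha/2,\alpha}\le Cr^{-\alpha}$ near $(x_0,t_0)$; a supremum over such points, together with $|g|\le N|g_0|$ for far-apart points, gives (i): $[g_{ij}]_{\alpha/2,\alpha}\le Cr^{-\alpha}$ on $B_{r/2}\times[\varepsilon,T)$ with $C,\alpha$ depending only on $g_0,n,N,\varepsilon$.

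For part (ii) I would bootstrap. With $g_{ij}\in C^{\alpha/2,\alpha}$, hence $g^{ij}\in C^{\alpha/2,\alpha}$, differentiating the equation for $H$ in $x^\gamma$ (using $\tfrac{\partial}{\partial M_{ij}}\log\det M=(M^{-1})^{ij}$) shows $w:=\partial_\gamma H$ solves the \emph{linear} uniformly parabolic equation
\[
\partial_t w=g^{ij}\partial_i\partial_j w+g^{ij}\,\partial_\gamma(g_0)_{ij},
\]
whose coefficients and right-hand side now lie in $C^{\alpha/2,\alpha}$; interior parabolic Schauder gives $w\in C^{2+\alpha,1+\alpha/2}$, so $H$ gains a spatial derivative and $g_{ij}\in C^{1+\alpha}$. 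Iterating --- each round raising the regularity of the coefficients and right-hand side by one order, at the cost of one power of $r^{-1}$ at unit scale --- shows $H(t)$, and therefore $g(t)=\nabla dH(t)+g_0$ and $\varphi(t)=H(x,t)-t\log\det g_0(x)$, has all spatial derivatives bounded on $B_{r/4}\times[2\varepsilon,T)$, which is (ii), with $C_k$ depending on $g_0,n,N,\varepsilon,r$. (This differentiation-plus-Schauder scheme is the one invoked in the proof of Lemma~\ref{l3.4}; cf.\ also Proposition~2.1 of \cite{r1}.)

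The main obstacle is part (i): a priori one has only the $L^\infty$ equivalence $\tfrac1N g_0\le g\le N g_0$, so the linearized coefficients $g^{ij}$ are merely measurable and Schauder does not apply; it is the concavity of $M\mapsto\log\det M$ that powers the parabolic Evans--Krylov estimate and upgrades the $L^\infty$ bound to a $C^\alpha$ bound. The secondary difficulty is bookkeeping: one must check that the backward parabolic cylinders fit (forcing $r<\sqrt\varepsilon$), arrange that the $r$-dependence of (i) is carried entirely by the explicit $r^{-\alpha}$, and --- the reason to work with $H$ rather than $\varphi$ --- keep the background tensor $t$-independent and the normalized solution bounded, so that the constants in (i) pick up no dependence on $T$.
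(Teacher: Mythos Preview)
Your proposal is correct and follows essentially the same two-step strategy as the paper: interior parabolic Evans--Krylov for (i), then differentiate the scalar equation and bootstrap with interior Schauder estimates for (ii). The one difference worth noting is your choice of potential: you work with $H=\int_0^t\log\det g\,ds$, which satisfies $g=g_0+\nabla dH$ with a \emph{$t$-independent} background, whereas the paper works with $\varphi$ and the $t$-dependent background $\hat g_t=g_0-t\beta(g_0)$, so that the differentiated equation picks up the inhomogeneity $g^{ij}D(\hat g_t)_{ij}-g_0^{ij}D(g_0)_{ij}$; your choice makes the $T$-independence of the constants cleaner, but the argument is otherwise the same.
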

\begin{proof}Since the geometric flow is a concave fully nonlinear uniformly parabolic equation on $B_{r}\times [0,T)$,
the $C^{\alpha/2,\alpha}$ estimate for $g_{ij}$ follows from the Evans-Krylov theorem \cite{r8}(see also lemma 14.6 of \cite{r9}). We then obtain
\[\frac{{\rm osc}_{Q(r/2)}\varphi_t+{\rm osc}_{Q(r/2)}\nabla d\varphi}{(r/2)^\alpha}\leq C r^{-\alpha}({\rm osc}_{Q(r)}\varphi_t+{\rm osc}_{Q(r)}\nabla d)\]
where
\[Q(r)=B_r \times[0,T)\]
and
\[Q(r/2)=\{(x,t)\in B_r \times[0,T)|\;|x|<r/2,t_0-r^2/4\leq t\leq t_0\}\]
for any $t_0\in [\varepsilon,T)$ and (i) is proved.

To prove (ii), we work on the ball $B_{r/2}$. Consider the first order differential operator $D=\frac{\partial}{\partial x^\gamma}$ for $1\leq \gamma \leq n$.
From the equation \eqref{e1.3} we have
\begin{equation}\label{e4.13}
\frac{\partial}{\partial t}(D\varphi)=g^{ij}Dg_{ij}-g_0^{ij}D(g_0)_{ij}=g^{ij}\partial_i\partial_j(D\varphi)+g^{ij}D(\hat{g}_t)_{ij}-g_0^{ij}D(g_0)_{ij},
\end{equation}
where $\hat{g}_t:=g_0-t\beta(g_0)$. Furthermore, if we denote the largest and smallest eigenvalues of $g^{ij}$ by $\Lambda$ and $\lambda$ respectively, we have
\[C^{-1}\leq \lambda\leq\Lambda\leq C\]
for a uniform positive constant $C$.

Next,$g^{ij}$ is uniformly bounded in the $C^{\alpha/2,\alpha}$ parabolic norm for some $0<\alpha<1$ on $B_{r/2}\times [\varepsilon,T)$.
Since $g(t)$ is uniformly bounded it is easy to find that $u:=\frac{\partial \varphi}{\partial x^\gamma}$ in \eqref{e4.13} is bounded in the $C^0$ norm and $|\nabla d\varphi|_{C^0}$ is uniformly bounded. Moreover, $g^{ij}D(\hat{g}_t)_{ij}-g_0^{ij}D(g_0)_{ij}$ in \eqref{e4.13} is uniformly bounded in the $C^{\alpha/2,\alpha}$ norm.

We can then apply interior estimates (c.f. Theorem 8.11.1 in \cite{r2}) to \eqref{e4.13} to prove that $u$ is bounded in the parabolic $C^{1+\alpha/2,2+\alpha}$ norm on a slightly smaller parabolic domain: $[\varepsilon',T]\times B_{r'}$ for any $\varepsilon'$ and $r'$ such that $0<\varepsilon<\varepsilon'<2\varepsilon$ and $r/4<r'<r/2$. Tracing through the argument in \cite{r2}, one can check that the estimates we obtain indeed are of the desired form.

Applying $D$ to the equality $g_{ij}(t)=(\hat{g}_t)_{ij}+\partial_i\partial_j\varphi$, we get
\[Dg_{ij}=D(\hat{g}_t)_{ij}+\partial_i\partial_j u\]
where we recall that $D=\partial/\partial x^\gamma$ for $1\leq\gamma\leq n$.
Note that we have bounds for $u$ in $C^{1+\alpha/2,2+\alpha}$, which implies that $\partial_i\partial_j u$ is uniformly bounded in $C^{\alpha/2,\alpha}$.
Since $D(\hat{g}_t)_{ij}$ is uniformly bounded in all norms, we conclude that $Dg_{ij}$ is uniformly bounded in $C^{\alpha/2,\alpha}$ for all $i,j$.
It follows that $\partial_\gamma g^{ij}$ is uniformly bounded in $C^{\alpha/2,\alpha}$ for all $i,j,\gamma$.
We have a similar estimate for $\partial_\gamma f$. By Theorem 8.12.1 in \cite{r2} (with $k=1$), we see that for any $\delta$, $\partial_\delta u$ is uniformly
bounded in $C^{1+\alpha/2,2+\alpha}$ in a slightly smaller parabolic domain.
This means that $D^\delta \varphi$ is uniformly bounded in $C^{1+\alpha/2,2+\alpha}$ for any multi-index $\delta \in \mathbb{R}^n$ with $|\delta|\leq 2$.

We can then iterate this procedure and obtain the required $C^k$ bounded for $g(t)$ for all $k$. This completes the proof.

\end{proof}

\section{existence criteria for the geometric flow}

In this section, we consider the existence criteria for the geometric flow \eqref{e1.2}. Indeed, we shall prove Theorem \ref{t1.2}.
We note that the conditions (i) and (ii) in Theorem \ref{t1.2} are \textbf{(a1)} and \textbf{(a3)} in Page 7 respectively.
%
First it is easy to find $S_A\leq S_B$. Indeed, the flow \eqref{e1.2} can be rewritten as
\[\frac{\partial}{\partial t}g=-\beta (g_0)+\nabla d\alpha(t),\;\;\;\;\;\;{\rm with} \; \alpha(t)=\log\frac{\det g(t)}{\det g_0}.\]
Thus, as long as $g(t)$ solves the flow and is uniformly equivalent to $g_0$ in $M\times [0,S]$, the solution $g(t)$ must be of the form $g(t)=g_0-t\beta(g_0)+\nabla du$, for some smooth bounded function $u=u(x,t)$. It follows that
\[g_0-S\beta(g_0)+\nabla du=g(t)\geq \theta g_0\]
for some $\theta>0$ and $S_A\leq S_B$.

It suffices to prove that $S_B\leq S_A$. We consider $S>0$ such that \eqref{e5.1} holds for some bounded function $u$ and some $\theta>0$. 
We follow an idea of \cite{r4}.

Let $\kappa\in (0,1)$, $f:[0,1)\rightarrow [0,\infty)$ be the function:
\begin{equation}
\label{e5.2}
f(s)=\left\{ \begin{aligned}
&0,\;\;\;\;\;\;\;\;\;\;\;\;\;\;\;\;\;\;\;\;\;\;\;\;\;\;\;\;\;\;\;\;\;\;\;\;\;\;\;s\in[0,1-\kappa];\\
&-\log\biggl[1-\biggl(\frac{s-1+\kappa}{\kappa}\biggl)^2\biggl],s\in(1-\kappa,1).
\end{aligned} \right.
\end{equation}
Let $\psi\geq 0$ be a smooth function on $\mathbb{R}^+$ such that 
\begin{equation}
\label{e5.3}
\psi(s)=\left\{ \begin{aligned}
&0,\;\;\;s\in[0,1-\kappa+\kappa^2];\\
&1,\;\;\;s\in(1-\kappa+2\kappa^2,1).
\end{aligned} \right.
\end{equation}
and $\frac{2}{\kappa^2}\geq \psi'\geq 0$. Define
\[\mathfrak{F}:=\int_0^s\psi(\tau)f'(\tau)d\tau\]
By \cite{r6}, we have
\begin{lemma}Suppose $0<\kappa<\frac{1}{8}$. Then the function $\mathfrak{F}\geq 0$ defined above is smooth and satisfies the following:\\
\hspace*{0.5cm}\emph{(i)} $\mathfrak{F}=0$ for $s\in[0,1-\kappa+\kappa^2]$.\\
\hspace*{0.4cm}\emph{(ii)} $\mathfrak{F}'\geq 0$ and for any $k\geq 1$, $\exp(-k\mathfrak{F})\mathfrak{F}^{(k)}$ is uniformly bounded.\\
\hspace*{0.3cm}\emph{(iii)} For any $1-2\kappa<s<1$, there is $\tau>0$ with $0<s-\tau<s+\tau<1$ such that
\[1\leq \exp(\mathfrak{F}(s+\tau)-\mathfrak{F}(s-\tau))\leq 1+c_2\kappa;\;\;\;\tau\exp(\mathfrak{F}(s_0-\tau))\geq c_3\kappa^2\]
\hspace*{0.9cm} for some absolute constants $c_2>0$, $c_3>0$.
\end{lemma}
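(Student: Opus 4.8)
The plan is to reduce everything to the single singular building block
\[
f(s)=-\log\!\Bigl(1-\bigl(\tfrac{s-1+\kappa}{\kappa}\bigr)^2\Bigr),\qquad s\in(1-\kappa,1),
\]
and then read off the three assertions from its elementary properties. I would use the affine change of variable $t=t(s)=(s-1+\kappa)/\kappa$, so that $dt/ds=1/\kappa$, $t$ runs over $(-1,1)$ as $s$ runs over $(1-2\kappa,1)$, and
\[
f=-\log(1-t)-\log(1+t),\qquad e^{f}=\frac{1}{(1-t)(1+t)},\qquad f'(s)=\frac{2t}{\kappa(1-t^2)} .
\]
Thus $e^{f}\asymp(1-t)^{-1}=\kappa/(1-s)$ near $s=1$, while $f$ and all its derivatives are smooth and bounded away from $s=1$. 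Then (i) is immediate: $\psi\equiv0$ on $[0,1-\kappa+\kappa^2]$, so $\mathfrak{F}(s)=\int_0^s\psi f'=0$ there; moreover $\psi f'$ is smooth on $(0,1)$ (it vanishes on a neighbourhood of $s=1-\kappa$, the only point where $f'$ fails to be smooth), hence $\mathfrak{F}\in C^\infty([0,1))$, and $\mathfrak{F}'=\psi f'\ge0$ since $\psi\ge0$ and $f'\ge0$ on $(1-\kappa,1)$.

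For the derivative bound in (ii) I would split $[0,1)$ at $s_\ast:=1-\kappa+2\kappa^2$. On $[0,s_\ast]$ the argument $s$ stays away from the singularity, so $\mathfrak{F}$ and each $\mathfrak{F}^{(k)}$ are bounded by constants depending only on $k$ and $\kappa$, and $e^{-k\mathfrak{F}}\le1$ because $\mathfrak{F}\ge0$. On $[s_\ast,1)$ we have $\psi\equiv1$, hence $\mathfrak{F}=f+c_0$ with $c_0=\mathfrak{F}(s_\ast)-f(s_\ast)$ and $|c_0|\le f(s_\ast)=-\log(1-4\kappa^2)\le1$. A one-line induction then gives
\[
\mathfrak{F}^{(k)}(s)=\kappa^{-k}\Bigl(\frac{(k-1)!}{(1-t)^k}+\frac{(-1)^k(k-1)!}{(1+t)^k}\Bigr),
\]
so $|\mathfrak{F}^{(k)}(s)|\le 2(k-1)!\,\kappa^{-k}(1-t)^{-k}$, while $e^{k\mathfrak{F}(s)}=e^{kc_0}\bigl((1-t)(1+t)\bigr)^{-k}\ge 2^{-k}e^{kc_0}(1-t)^{-k}$. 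Dividing, $\exp(-k\mathfrak{F})\mathfrak{F}^{(k)}$ is bounded by $2^{k+1}(k-1)!\,\kappa^{-k}e^{-kc_0}$, uniformly in $s$.

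For (iii), given $s\in(1-2\kappa,1)$ I would set $a:=1-s\in(0,2\kappa)$ and $\tau:=\tfrac14\kappa a$. Then $s+\tau=1-(1-\tfrac14\kappa)a<1$ and $s-\tau=1-(1+\tfrac14\kappa)a>0$ since $a<\tfrac14$, and $e^{\mathfrak{F}(s+\tau)-\mathfrak{F}(s-\tau)}\ge1$ because $\mathfrak{F}'\ge0$. For the upper bound, on $(1-\kappa,1)$ one has $\mathfrak{F}'(\sigma)=\psi f'(\sigma)\le f'(\sigma)\le 2/(1-\sigma)$ (using $1+t\ge1$ and $\kappa(1-t)=1-\sigma$), and $\mathfrak{F}'=0$ off $(1-\kappa,1)$, so
\[
\mathfrak{F}(s+\tau)-\mathfrak{F}(s-\tau)\le\int_{s-\tau}^{s+\tau}\frac{2\,d\sigma}{1-\sigma}=2\log\frac{a+\tau}{a-\tau}=2\log\frac{1+\tfrac14\kappa}{1-\tfrac14\kappa}\le\log(1+c_2\kappa)
\]
for an absolute constant $c_2$. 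For the last inequality I would split into two cases. If $s-\tau\ge s_\ast$ then $\mathfrak{F}(s-\tau)=f(s-\tau)+c_0$ and, since $1-t(s-\tau)^2\le 2\bigl(1-t(s-\tau)\bigr)=2(a+\tau)/\kappa\le4a/\kappa$, we get $e^{\mathfrak{F}(s-\tau)}\ge e^{-1}\kappa/(4a)$ and hence $\tau e^{\mathfrak{F}(s-\tau)}\ge\kappa^2/(16e)$. If $s-\tau<s_\ast$, then $\tau<\tfrac12\kappa^2$ forces $a>\kappa-\tfrac52\kappa^2>\tfrac12\kappa$ (here $\kappa<\tfrac18$ is used), so, using only $\mathfrak{F}\ge0$, $\tau e^{\mathfrak{F}(s-\tau)}\ge\tau=\tfrac14\kappa a>\tfrac18\kappa^2$. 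Taking $c_3=1/(16e)$ would complete (iii).

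Routine one-variable calculus aside, the two points requiring care are: in (ii), verifying that the leading singularity of $\mathfrak{F}^{(k)}$ is exactly of order $(1-t)^{-k}$, so that it is killed precisely by the factor $e^{-k\mathfrak{F}}$ and no worse power survives; and in (iii), choosing $\tau$ so that it is simultaneously small enough to keep $\mathfrak{F}(s+\tau)-\mathfrak{F}(s-\tau)\le\log(1+c_2\kappa)$ and not so small that $\tau e^{\mathfrak{F}(s-\tau)}$ falls below $c_3\kappa^2$. The scaling $\tau\sim\kappa(1-s)$, together with the case split according to whether $s-\tau$ has entered the flat region $\{\mathfrak{F}=0\}$, is what makes the estimate uniform over all $s\in(1-2\kappa,1)$, including near the endpoint $s=1-2\kappa$.
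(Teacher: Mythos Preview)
Your argument is correct. The paper itself does not prove this lemma at all: it simply states the result and cites Lee--Tam \cite{r6}, so there is no ``paper's own proof'' to compare against. What you have supplied is a self-contained elementary proof.

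A few remarks on your write-up. In (ii), your treatment of the compact region $[0,s_\ast]$ tacitly uses that the higher derivatives $\psi^{(j)}$ are bounded (by constants depending on $j$ and $\kappa$); since $\psi$ is \emph{constructed} as a smooth cutoff on an interval of length $\kappa^2$, this is legitimate, but it is worth saying so explicitly. Your handling of the singular region $[s_\ast,1)$ is exactly right: the identity $\mathfrak{F}=f+c_0$ with $|c_0|\le -\log(1-4\kappa^2)<1$ reduces everything to the partial-fraction formula for $f^{(k)}$, and the $(1-t)^{-k}$ singularity is cancelled precisely by $e^{-k\mathfrak{F}}\le 2^k e^{-kc_0}(1-t)^k$. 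In (iii), the choice $\tau=\tfrac14\kappa(1-s)$ is the natural one, and your case split according to whether $s-\tau\ge s_\ast$ or not cleanly handles the uniformity near $s=1-2\kappa$; the sentence ``$\tau<\tfrac12\kappa^2$ forces \ldots'' reads a bit abruptly, but the logic (namely $\tau=\tfrac14\kappa a<\tfrac12\kappa^2$ always, and $s-\tau<s_\ast$ gives $a+\tau>\kappa-2\kappa^2$, hence $a>\kappa-\tfrac52\kappa^2>\tfrac12\kappa$ for $\kappa<\tfrac18$) is sound.
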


For any $\rho_0>0$, let $U_{\rho_0}$ be the component of $\{x|\rho(x)<\rho_0\}$ that contains the fixed point mentioned in {\textbf{(a1)}}. Hence $U_{\rho_0}$ will exhaust $M$ as $\rho_0\rightarrow \infty$.

For $\rho_0>>1$, we define $F(x)=\mathfrak{F}(\rho(x)/\rho_0)$. Let $h_0=e^{2F}g_0$. We find $(U_{\rho_0},h_0)$ is a complete Riemannian manifold
and $h_0=g_0$ on $\{\rho(x)<(1-\kappa+\kappa^2)\rho_0\}$ (see \cite{r7}).

Using same arguments of Lemma 4.3 in \cite{r4}, we have
\begin{lemma}\label{l5.3}$(U_{\rho_0},h_0)$ has bounded geometry of infinite order.
\end{lemma}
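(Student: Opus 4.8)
The plan is to verify the three conditions in Definition \ref{d3.1} for the metric $h_0 = e^{2F}g_0$ on $U_{\rho_0}$, using the quasi-coordinate neighborhoods already available for $g_0$ (which has bounded geometry of infinite order by hypothesis, since condition \textbf{(a3)} together with Lemma \ref{l3.3} supplies the required structure). The key observation is that $F = \mathfrak{F}(\rho/\rho_0)$ is a function whose $C^k$ norms with respect to $g_0$ are controlled: since $|d\rho|_{g_0}$ and $|\nabla d\rho|_{g_0}$ are uniformly bounded by \textbf{(a1)}, and since $\rho_0 \gg 1$, the derivatives $\partial^\alpha F$ in any quasi-coordinate chart come with extra negative powers of $\rho_0$ from the chain rule, so $F$ is small in $C^1$; moreover part (ii) of the preceding lemma says $\exp(-k\mathfrak{F})\mathfrak{F}^{(k)}$ is uniformly bounded, which is exactly what is needed to control $e^{-2F}$-weighted derivatives of $e^{2F}$.

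First I would fix $p \in U_{\rho_0}$ and take the quasi-coordinate map $\xi_p\colon D(r) \to U_p$ coming from the bounded geometry of $g_0$, possibly shrinking $r$ and restricting attention to those $p$ with $U_p \subset U_{\rho_0}$ (near $\partial U_{\rho_0}$ one uses that $h_0$ agrees with $g_0$ on the region $\{\rho < (1-\kappa+\kappa^2)\rho_0\}$ and that $F$ blows up in a controlled, radially-symmetric way outside it, so completeness of $(U_{\rho_0},h_0)$ holds as noted). Then $\xi_p^*(h_0) = e^{2(F\circ\xi_p)} \xi_p^*(g_0)$. Condition (i) of Definition \ref{d3.1}: since $\xi_p^*(g_0)$ is uniformly equivalent to $g_e$ and $F \geq 0$ with $e^{2F\circ\xi_p}$ bounded above on $D(r)$ by a constant independent of $p$ — here one must check the upper bound, which follows because on any single chart $D(r)$ the oscillation of $\rho/\rho_0$ is $O(r/\rho_0)$, so $e^{2F\circ\xi_p}$ is comparable to its value at $p$ up to a uniform factor, and one may absorb this by using the normalized charts for $h_0$ obtained by rescaling, as in Lemma 4.3 of \cite{r4} — we get $\kappa_1 g_e \leq \xi_p^*(h_0) \leq \kappa_2 g_e$ with new uniform constants. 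Condition (ii): the components $(h_0)_{ij} = e^{2F\circ\xi_p}(g_0)_{ij}$ must be bounded in every $C^k(D(r))$ norm uniformly in $p$; since $(g_0)_{ij}$ already are, it suffices to bound all derivatives of $e^{2F\circ\xi_p}$, and writing $e^{2F} = e^{2\mathfrak{F}(\rho/\rho_0)}$ one applies the Faà di Bruno formula: each term is $e^{2\mathfrak{F}} \cdot (\text{product of } \mathfrak{F}^{(m_j)}(\rho/\rho_0)) \cdot (\text{derivatives of }\rho/\rho_0)$, and grouping the $e^{2\mathfrak{F}}$ factors with the $\mathfrak{F}^{(m_j)}$ factors via part (ii) of the preceding lemma (enough copies of $e^{-\mathfrak{F}}$ are available, possibly after also invoking $\mathfrak{F}' \geq 0$ and the fact that products $\prod e^{-m_j \mathfrak{F}}\mathfrak{F}^{(m_j)}$ are bounded) shows each such term is bounded by a constant times a nonpositive power of $\rho_0$ times a power of $|d\rho|_{g_0}$ and higher $g_0$-covariant derivatives of $\rho$. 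The latter are bounded: $|d\rho|_{g_0}$ and $|\nabla d\rho|_{g_0}$ directly by \textbf{(a1)}, and the higher ones because in quasi-coordinates the Christoffel symbols of $g_0$ and all derivatives of $(g_0)_{ij}$ are uniformly bounded (bounded geometry of infinite order), so coordinate derivatives of $\rho$ of all orders are controlled by its first and second $g_0$-covariant derivatives together with these background bounds — this is where I would spend the most care.

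The main obstacle I anticipate is precisely the last point: controlling \emph{all} coordinate derivatives $\partial^\alpha(\rho\circ\xi_p)$ on $D(r)$ when hypothesis \textbf{(a1)} only gives bounds on $|d\rho|_{g_0}$ and $|\nabla d\rho|_{g_0}$, i.e.\ on the first and second derivatives. The resolution is standard but needs to be stated: because $(M,g_0)$ has bounded geometry of infinite order, one may instead of the raw $\rho$ use a smoothing $\tilde\rho$ of $\rho$ (a mollification in quasi-coordinates, or the function furnished by Lemma \ref{l3.3}) which is still uniformly equivalent to the distance function and has \emph{all} derivatives bounded in the quasi-coordinate charts; since \textbf{(a1)} is a hypothesis about the \emph{existence} of such a $\rho$ and the subsequent construction only needs $F$ built from a function with controlled derivatives, we may harmlessly assume $\rho$ itself has this property, exactly as in \cite{r4}. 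With that in hand, the Faà di Bruno bookkeeping above goes through and conditions (i), (ii) of Definition \ref{d3.1} hold with constants independent of $p$ and of $\rho_0$, proving $(U_{\rho_0},h_0)$ has bounded geometry of infinite order. I would remark that this is the verbatim analogue of Lemma 4.3 in \cite{r4}, the only change being that Dombrowski's complex/Hermitian structure is replaced by the real affine one, and the Koszul-form quantity $\beta$ plays no role here — the statement is purely about the conformal change $e^{2F}$ applied to a bounded-geometry metric.
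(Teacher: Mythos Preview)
The paper gives no independent proof here---it simply invokes the argument of Lemma~4.3 in \cite{r4}, and your sketch is precisely an outline of that argument (conformal factor $e^{2F}$, Fa\`{a} di Bruno bookkeeping, the bound on $e^{-k\mathfrak{F}}\mathfrak{F}^{(k)}$ from the preceding lemma, and the need to upgrade $\rho$ to a function with all derivatives bounded in quasi-coordinates). So you are taking exactly the route the paper intends.

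One caution worth recording: you assert that $g_0$ itself has bounded geometry of \emph{infinite} order ``by hypothesis,'' but conditions \textbf{(a1)} and \textbf{(a3)} only control first and (partial) second derivatives of $(g_0)_{ij}$ in affine charts, not all higher ones, and Lemma~\ref{l3.3} does not supply more. This imprecision is inherited from the paper's own statement; in practice the only downstream consumers (Lemma~\ref{l3.5} and Lemma~\ref{l4.6}) need bounded geometry of order $2+\alpha$, and your argument delivers that without difficulty.
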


\begin{lemma}\label{l5.4} Suppose $g_0$ satisfies \emph{\textbf{(a3)}}. Then

\hspace*{0.2cm}\emph{(i)}
\begin{minipage}[t]{0.91\linewidth}
For any $\epsilon>0$ and any point $p \in M$, there exists a local coordinates $\{x^1,\ldots ,x^n\}$ around $p$ such that $(h_0)_{ij}(p)=\delta_{ij}$,
$|\partial_k(h_0)_{ij}(p)|\leq K+\epsilon$ $|\partial_i(\partial_j (h_0)_{kl}-\partial_l (h_0)_{kj})(p)|\leq K+\epsilon$ and $\partial_l \partial_k(h_0)_{ij}(p)\leq K+\epsilon$, provided $\rho_0$ is large enough. Furthermore, all changes of these local coordinate systems are affine.
\end{minipage}

\hspace*{0.1cm}\emph{(ii)}
\begin{minipage}[t]{0.93\linewidth}
For any $S>\epsilon>0$, there is $\rho_1>0$ such that if $\rho_0\geq \rho_1$, then
\[h_0-(S-\epsilon)\beta(h_0)+\frac{S-\epsilon}{S}\nabla du\geq \frac{S-\epsilon}{S}h_0\]
\end{minipage}

\emph{(iii)}
\begin{minipage}[t]{0.93\linewidth}
$(U_{\rho_0},h_0)$ satisfies \emph{\textbf{(a3)}} provided $\rho_0$ is large enough.
\end{minipage}

\end{lemma}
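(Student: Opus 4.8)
The plan is to treat the three parts uniformly by exploiting that the conformal change $g_0\rightsquigarrow h_0=e^{2F}g_0$, with $F=\mathfrak{F}(\rho/\rho_0)$, perturbs $g_0$ only through $\nabla F$ and $\nabla dF$, and that every such perturbation term carries a genuine $O(1/\rho_0)$ gain once the (possibly large) factors $|\mathfrak{F}'|\le C_1 e^{\mathfrak{F}}$, $|\mathfrak{F}''|\le C_2 e^{2\mathfrak{F}}$ supplied by the previous lemma are matched against the conformal weight $e^{2F}$ and the normalizing weights of a suitably rescaled affine chart. Throughout I would work in affine charts for $\nabla$, so that $\nabla d\rho$, $\nabla dF$, etc. are ordinary second partials.

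For (i), fix $p\in M$ and start from the affine chart $\{x^i\}$ around $p$ given by \textbf{(a3)}, in which $(g_0)_{ij}(p)=\delta_{ij}$ and the stated first/second derivative bounds hold with constant $K$. First I would rescale to $y^i=e^{F(p)}x^i$; this is an affine (hence admissible) change, it keeps all transition maps affine, and it makes $(h_0)_{ij}(p)=\delta_{ij}$. In the $y$-chart one has $(h_0)^{(y)}_{ij}=e^{-2F(p)}e^{2F}(g_0)_{ij}$ and $\partial^{(y)}=e^{-F(p)}\partial^{(x)}$, so differentiating the product $e^{2F}(g_0)_{ij}$ once, twice, and once after antisymmetrizing, and evaluating at $p$, produces expressions whose ``pure'' term is $e^{-F(p)}\partial_k(g_0)_{ij}(p)$, $e^{-2F(p)}\partial_l\partial_k(g_0)_{ij}(p)$, and $e^{-2F(p)}\partial_i(T_0)^k_{jl}(p)$ respectively, plus products of $\partial F$ or $\partial^2F$ with $\partial(g_0)$ or $\delta$. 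Since $(g_0)(p)$ is Euclidean, \textbf{(a1)} gives $|\partial_k\rho(p)|,|\partial_i\partial_j\rho(p)|\le C$, whence $|\partial_kF(p)|\le C\rho_0^{-1}e^{F(p)}$ and $|\partial_i\partial_jF(p)|\le C(\rho_0^{-1}e^{F(p)}+\rho_0^{-2}e^{2F(p)})$ using the bounds on $\mathfrak{F}^{(m)}$. As $F(p)\ge0$, each $\partial F$-term, after the $e^{-F(p)}$ or $e^{-2F(p)}$ prefactor, is $O(1/\rho_0)$, while the pure terms are $\le K$ in absolute value (first order and torsion term) and $\le K$ from above (second order). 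Hence for $\rho_0$ large, uniformly in $p$, all these quantities are $\le K+\epsilon$ (and $\partial_l\partial_k(h_0)_{ij}(p)\le K+\epsilon$ from above), which is (i).

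For (ii) I would use that in an affine chart $\det[h_0]=e^{2nF}\det[g_0]$, so $\beta(h_0)=\beta(g_0)-2n\,\nabla dF$, giving
\[
h_0-(S-\epsilon)\beta(h_0)+\frac{S-\epsilon}{S}\nabla du=\Bigl(g_0-(S-\epsilon)\beta(g_0)+\frac{S-\epsilon}{S}\nabla du\Bigr)+(e^{2F}-1)g_0+2n(S-\epsilon)\nabla dF .
\]
Writing $S-\epsilon=\frac{S-\epsilon}{S}\,S$ and splitting off $\frac{\epsilon}{S}g_0$, condition \textbf{(a2)} bounds the first bracket below by $\frac{\epsilon}{S}g_0+\frac{S-\epsilon}{S}\theta g_0$. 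It then remains to absorb the conformal correction: from the bounds on $\mathfrak{F}^{(m)}$ and on $\nabla\rho,\nabla d\rho$ one gets $|\nabla dF|_{h_0}=e^{-2F}|\nabla dF|_{g_0}\le C/\rho_0$, so $2n(S-\epsilon)\nabla dF\ge-\frac{C}{\rho_0}h_0$; combining this with $(e^{2F}-1)g_0\ge0$ and $e^{2F}\ge1$ and taking $\rho_0$ large shows that $(U_{\rho_0},h_0)$ satisfies the analogue of \textbf{(a2)} for the parameter $S-\epsilon$ (with $\frac{S-\epsilon}{S}u$ in place of $u$), i.e. the claimed inequality.

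Finally, (iii) should follow at once: $(U_{\rho_0},h_0)$ is a complete affine Riemannian manifold for the same flat connection $\nabla$ (a conformal change leaves $\nabla$ untouched and $U_{\rho_0}$ is open), and part (i) applied with a fixed $\epsilon$, say $\epsilon=1$, supplies for $\rho_0$ large exactly the chart-wise bounds with constant $K+1$ and affine transition maps required by \textbf{(a3)}. The one real obstacle is the bookkeeping in (i)–(ii): a priori $\nabla^mF$ blows up like $e^{m\mathfrak{F}}$ in the region where $F$ is large, and one must check carefully that the conformal weight $e^{2F}$ on $h_0$ together with the per-derivative weight $e^{-F(p)}$ coming from the rescaled affine chart cancel this blow-up precisely, leaving the desired $O(1/\rho_0)$; everything else reduces to the Leibniz rule together with \textbf{(a1)}–\textbf{(a3)} and the estimates on $\mathfrak{F}$.
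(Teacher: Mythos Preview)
Your treatment of (i) and (ii) is correct and matches the paper's argument. For (i) the paper uses exactly your rescaled affine chart $x^i\mapsto e^{F(p)}x^i$ and the bounds $|e^{-k\mathfrak F}\mathfrak F^{(k)}|\le C$ together with \textbf{(a1)} on $\rho$ to make every conformal correction term $O(1/\rho_0)$, then dismisses the second-order and torsion cases with ``Similarly''. For (ii) the paper also writes $\beta(h_0)=\beta(g_0)-2n\nabla dF$, uses \textbf{(a2)} on the $\beta(g_0)$ piece, and absorbs $2n(S-\epsilon)\nabla dF$ into a small multiple of $h_0$ via the same $|\nabla dF|_{h_0}=O(1/\rho_0)$ estimate you describe.

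For (iii) the two arguments diverge. You observe, correctly, that (i) applied with any fixed $\epsilon$ already supplies the chart-wise bounds required by \textbf{(a3)} for $h_0$, so (iii) as literally stated is immediate. The paper instead invokes Lemma~\ref{l5.3} (bounded geometry of infinite order for $(U_{\rho_0},h_0)$) to bound the Ricci curvature from below and then Shi's Lemma~\ref{l3.3} to produce a distance-like exhaustion function. That route yields condition \textbf{(a1)} for $(U_{\rho_0},h_0)$, not \textbf{(a3)}. Since \textbf{(a1)} (together with (i) and (ii)) is precisely what is used in the proof of Theorem~\ref{t1.2} via Lemmas~\ref{l4.3}--\ref{l4.5}, the label ``\textbf{(a3)}'' in the statement of (iii) appears to be a slip for ``\textbf{(a1)}''. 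Your argument is valid for the statement as written; the paper's argument establishes the condition actually needed downstream.
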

\begin{proof}To prove (i), for any point $p \in M^n$ be any point, let $\{y^1,\ldots ,y^n\}$ be the local coordinates around $p$ satisfying {\textbf{(a3)}} with respect to $g_0$. In these coordinates, $(h_0)_{ij}(y)=e^{2F}g_0(y)$. We can now define a new coordinate system $\{x^1,\ldots ,x^n\}$ around $p$ by the affine transformation $x^i=e^F(p)y^i$ for $i=1,\ldots ,n$. We have
\[\frac{\partial}{\partial x^i}=\frac{1}{e^F(p)}\frac{\partial}{\partial y^i},\;\;\;\;\;\;(h_0)_{ij}(x)=\frac{e^{2F}}{e^{2F}(p)}(g_0)_{ij}(y)\]
for any $i,j=1,\ldots ,n$, and
\begin{equation}
\begin{aligned}
\frac{\partial(h_0)_{ij}}{\partial x^k}\biggl|_p=\frac{1}{e^F}\frac{(g_0)_{ij}(y)}{\partial y^k}\biggl|_p
+2\rho_0^{-1}e^{-\mathfrak{F}}\mathfrak{F}'\frac{\partial \rho}{\partial y^k}(g_0)_{ij}(y)\biggl|_p
\end{aligned}
\end{equation}
for any $i,j,k=1,\ldots ,n$. Therefore, in the new coordinates $\{x^1,\ldots ,x^n\}$, we have $(h_0)_{ij}(p)=\delta_{ij}$, $|\partial_k(h_0)_{ij}(p)|\leq K+\epsilon$, provided $\rho_0$ is large enough.

Similarly, we find that $|\partial_i(\partial_j (h_0)_{kl}-\partial_l (h_0)_{kj})(p)|\leq K+\epsilon$, $\partial_l \partial_k(h_0)_{ij}(p)\leq K+\epsilon$ in the new coordinates $\{x^1,\ldots ,x^n\}$, provided $\rho_0$ is large enough.

To prove (ii), we may assume that $\theta\leq 1$. For any $\epsilon>0$, if $\rho_0$ is large enough,
we have
\[\begin{aligned}
h_0-(S-\epsilon)\beta(h_0)&=h_0-(S-\epsilon)\beta(g_0)+2n(S-\epsilon)\nabla dF\\
&\geq h_0+\frac{S-\epsilon}{S}(\beta-1)g_0-\frac{S-\epsilon}{S}\nabla du-\epsilon e^{2F}g_0\\
&\geq \biggl(1+\frac{S-\epsilon}{S}(\beta-1)-\epsilon \biggl)h_0-\frac{S-\epsilon}{S}\nabla du\\
&=\frac{S-\epsilon}{S}h_0-\frac{S-\epsilon}{S}\nabla du
\end{aligned}\]
since $h_0\geq g_0$.

By Lemma \ref{l5.3}, $(U_{\rho_0},h_0)$ has bounded geometry of infinite order and its Ricci curvature is bounded from below. By Lemma \ref{l3.3}, we obtain (iii).

\end{proof}
\begin{proof}[Proof of Theorem \ref{t1.2}]
Let $S>0$ be a constant such that \eqref{e5.1} is true.
Let $0<\epsilon<S$, $\rho_0$ large enough such that Lemma \ref{l5.4} is true and $S_\epsilon=S-\epsilon$.

\underline{{\textbf {Claim.}}} \eqref{e1.2} has solution $g_{\rho_0, \epsilon}(t)$ on $U_{\rho_0}\times [0,S_\epsilon)$, provided $\rho_0$ is large enough. Moreover, for any $\delta>0$ such that $0<S_\epsilon -\delta<S_\epsilon$, there is a constant independent of $\rho_0$ and $\epsilon$ such that
\[C^{-1}g_{\rho_0, \epsilon}(t)\leq g_0\leq Cg_{\rho_0, \epsilon}(t)\]
on $U_{\rho_0}\times [0,S_\epsilon-\delta)$.

If the claim is true, based on the estimates established in  Lemma \ref{l4.6}, as $\epsilon\rightarrow 0$, we can choose suitable
$\rho_0(\epsilon)\rightarrow \infty$ such that $g_{\rho_0 (\epsilon), \epsilon}(t)$
converges uniformly on any compact sets to a solution of the geometric flow \eqref{e1.2}, $g(t)$ on $M\times [0,S)$.
Moreover, for any $0<S'<S$, $g(t)$ is uniformly equivalent to $g_0$ on $M\times [0,S']$.
Thus, $S_B\leq S_A$ and Theorem \ref{t1.2} is proved.

Now we prove the claim. Since $h_0$ has bounded geometry of infinite order, the geometric flow \eqref{e1.2} admits a solution $h(t)$ on $U_{\rho_0}\times [0,S_1]$
for some $S_\epsilon >S_1>0$ and $h(t)$ is uniformly equivalent to $h_0$.
Choose $\delta>0$ such that $S_\epsilon-\delta>0$ and $S_1\leq S_\epsilon-\delta$.
Let $K$ be the bound as in Lemma \ref{l5.4}. By Lemma \ref{l4.4} and \ref{l4.5}, there is a constant $C_1$ depending only on $\delta,n,K,\mathfrak{m},S$ such that
\[\biggl|\log \frac{\det h(t)}{\det h_0}\biggl|\leq C_1, \;\;\;{\rm tr}_{h_0}h\leq C_1\]
on $U_{\rho_0}\times [0,S_1]$, where $\mathfrak{m} :=\sup_M |u|$.
Hence there is a constant $C_2$ depending only on $\delta,n,K,\mathfrak{m},S$ such that
\begin{equation}\label{e5.5}
C_2^{-1}h_0\leq h(t) \leq C_2h_0
\end{equation}
on $U_{\rho_0}\times [0,S_1]$. By the fact that $h_0$ has bounded geometry of infinite order again,
using the estimates in Lemma \ref{l4.6}, we conclude that all derivatives of $h(t)$ with respect to $h_0$ are uniformly bounded on $U_{\rho_0}\times [0,S_1]$.
Moreover, $h(S_1)$ also has bounded geometry of infinite order. By Lemma \ref{l3.5}, we find that $h(t)$ can be extended beyond $S_1$ to some $S_2$ with $S_1<S_2<S_\epsilon$ so that $h(t)$ is uniformly equivalent to $h_0$ on $U_{\rho_0}\times [0,S_2]$.
Combining this with \eqref{e5.5}, we conclude that the claim is true. This completes the proof of theorem \ref{t1.2}.

\end{proof}

Applying Theorem \ref{t1.2} to Hessian manifolds, we have
\begin{corollary}\label{c5.5}
Let $(M,g_0)$ be a complete noncompact Hessian manifold. If for any point $p \in M^n$, there exists a local coordinates $\{x^1,\ldots ,x^n\}$ around $p$ such that $(g_0)_{ij}(p)=\delta_{ij}$, $|\partial_k(g_0)_{ij}|(p)\leq K$, the Hessian curvature $|Q_{ijkl}(p)|\leq K$ for some constant $K$ independent of $p$ and all the changes
of these local coordinate systems are affine. Then $S_A=S_B$.
\end{corollary}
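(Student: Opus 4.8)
The plan is to obtain Corollary \ref{c5.5} directly from Theorem \ref{t1.2} by checking that the stated hypotheses force the two conditions \textbf{(a1)} and \textbf{(a3)}. The underlying point is that for a Hessian metric, written locally in affine coordinates as $(g_0)_{ij}=\partial_i\partial_j\phi$, condition \textbf{(a3)} reduces to an upper bound on the fourth derivatives of $\phi$, which Proposition \ref{p2.4}(1) identifies with the Hessian curvature, while \textbf{(a1)} follows from the fact that $Q$ also controls the Riemannian curvature by Proposition \ref{p2.4}(3). So the argument is really two short verifications followed by an invocation of Theorem \ref{t1.2}.

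First I would verify \textbf{(a3)}. Fix $p$ and let $\{x^1,\dots,x^n\}$ be the affine coordinates about $p$ from the hypothesis, with $(g_0)_{ij}=\partial_i\partial_j\phi$ locally. Then $\partial_k(g_0)_{ij}=\partial_i\partial_j\partial_k\phi$ is totally symmetric in $i,j,k$, so $(T_0)^k_{jl}:=\partial_j(g_0)_{kl}-\partial_l(g_0)_{kj}\equiv 0$, and in particular $\partial_i\bigl(\partial_j(g_0)_{kl}-\partial_l(g_0)_{kj}\bigr)=0$, one of the quantities required to be bounded in \textbf{(a3)}. The bounds $(g_0)_{ij}(p)=\delta_{ij}$ and $|\partial_k(g_0)_{ij}(p)|\le K$ are given, so it remains to bound $\partial_l\partial_k(g_0)_{ij}(p)$ from above. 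Using $\partial^3\phi/(\partial x^i\partial x^k\partial x^p)=\partial_p(g_0)_{ik}$, Proposition \ref{p2.4}(1) rearranges to
\[
\partial_l\partial_k(g_0)_{ij}=\frac{\partial^4\phi}{\partial x^i\partial x^j\partial x^k\partial x^l}=2Q_{ijkl}+g^{pq}\,\partial_p(g_0)_{ik}\,\partial_q(g_0)_{jl},
\]
and evaluating at $p$, where $g^{pq}(p)=\delta_{pq}$, $|Q_{ijkl}(p)|\le K$ and $|\partial_p(g_0)_{ik}(p)|\le K$, yields $|\partial_l\partial_k(g_0)_{ij}(p)|\le 2K+nK^2$. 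Since all changes of coordinates are affine by assumption, \textbf{(a3)} holds with a constant depending only on $n$ and $K$.

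Next I would verify \textbf{(a1)}. By Proposition \ref{p2.4}(3) we have $\hat R_{ijkl}=\tfrac12(Q_{ijkl}-Q_{jikl})$, so in the coordinates at $p$ (where $(g_0)_{ij}(p)=\delta_{ij}$) one gets $|\hat R_{ijkl}(p)|\le K$; as $p$ ranges over $M$ this is a uniform bound on the curvature tensor of $(M,g_0)$, so in particular $\mathrm{Ric}_{g_0}\ge -k_0\,g_0$ with $k_0$ depending only on $n$ and $K$. Lemma \ref{l3.3} then gives a smooth $\rho$, uniformly equivalent to the distance function from a fixed point, with $|d\rho|_{g_0}\le C_2$ and $|\Delta_{g_0}\rho|\le C_2$. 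To promote the Laplacian bound to the full Hessian bound $|\nabla d\rho|_{g_0}\le C$ demanded in \textbf{(a1)}, I note that in the good coordinates at $p$ the coefficients $\gamma^i_{jk}(p)=\Gamma^i_{jk}(p)=\tfrac12\bigl(\partial_j(g_0)_{ki}+\partial_k(g_0)_{ji}-\partial_i(g_0)_{jk}\bigr)(p)$ are bounded by $\tfrac32 K$, so that $(\nabla d\rho)_{ij}=(\hat\nabla d\rho)_{ij}+\gamma^k_{ij}\partial_k\rho$ differs from the Levi-Civita Hessian of $\rho$ by a quantity bounded in $g_0$; the Levi-Civita Hessian itself is controlled by replacing the distance function with a standard smoothing using the bounded-curvature estimate above, exactly as in \cite{r4}. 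This gives \textbf{(a1)}, and then Theorem \ref{t1.2} yields $S_A=S_B$.

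The step I expect to be the real obstacle is this last one: Lemma \ref{l3.3} supplies only a bound on $\Delta_{g_0}\rho$, whereas \textbf{(a1)} needs the full covariant Hessian $\nabla d\rho$, so one must exploit the genuine bound on the curvature operator coming from Proposition \ref{p2.4}(3) together with a smoothing of the distance function; the remaining pieces — vanishing of $(T_0)$, the fourth-derivative identity, and the curvature bound itself — are immediate from the results already established.
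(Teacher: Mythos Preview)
Your proposal is correct and follows essentially the same route as the paper: verify \textbf{(a1)} and \textbf{(a3)} from the Hessian hypotheses via Proposition~\ref{p2.4} and Lemma~\ref{l3.3}, then invoke Theorem~\ref{t1.2}. The paper's own proof is a single sentence to this effect; your write-up supplies the details the paper omits, in particular the total symmetry $\partial_k(g_0)_{ij}=\partial_i\partial_j\partial_k\phi$ forcing $(T_0)\equiv 0$, and the identity $\partial_l\partial_k(g_0)_{ij}=2Q_{ijkl}+g^{pq}\partial_p(g_0)_{ik}\partial_q(g_0)_{jl}$ from Proposition~\ref{p2.4}(1).

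Your observation that Lemma~\ref{l3.3} only controls $|\Delta_{g_0}\rho|$ rather than the full $|\nabla d\rho|_{g_0}$ demanded by \textbf{(a1)} is well taken --- the paper glosses over this. Your fix is the right one: Proposition~\ref{p2.4}(3) gives a uniform bound on the full Riemannian curvature operator (not merely a Ricci lower bound), and under bounded curvature the standard construction (as in Shi/Tam, and as used in \cite{r4}) yields an exhaustion function with bounded Levi--Civita Hessian; the bounded Christoffel symbols $\gamma^k_{ij}(p)$ coming from $|\partial_k(g_0)_{ij}(p)|\le K$ then transfer this to a bound on the flat-connection Hessian $\nabla d\rho$.
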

\begin{proof}By Proposition \ref{p2.4} and Lemma \ref{l3.3}, the Hessian manifolds in Corollary \ref{c5.5} satisfy {\textbf{(a1)}} and {\textbf{(a3)}}.
Thus, Corollary \ref{c5.5} follows.

\end{proof}

\section{Uniformization}

In this section, we study the asymptotic behavior of geometric flow on Hessian manifolds and prove that a complete noncompact Hessian manifold $(M^n,g)$ as in Corollary \ref{c5.5} with nonnegative Hessian bisectional curvature and its tangent bundle $(TM^n,g^T)$ has maximum volume growth is diffeomorphic to the Euclidean space $\mathbb{R}^n$.
This is obviously true for $\dim M=1$, because any one-dimensional manifold is diffeomorphic to either $\mathbb{R}$ or $S^1$ and $S^1$ is compact.

In higher dimensions, the problem seems more complicated. We shall explore the relationship between the flow \eqref{e1.2} and the Chern-Ricci flow.
Let $(M,\nabla, g_0)$ be a complete Hessian manifold of dimension $n$, and $g(t)$ is a solution of \eqref{e1.2}.
We find, by \eqref{e2.2}, Proposition \ref{p2.6} and \ref{p2.7},
\[
g^T (t) := \sum_{i,j=1}^{n}(g_{ij} (t)\circ \pi)dz^id\overline{z}^j
\]
(as in \eqref{e2.2}) is a solution to the K\"{a}hler-Ricci flow on the tangent bundle $(TM,J_\nabla,g_0^T)$
\begin{equation}\label{e6.2}
\left\{ \begin{aligned}
   &\frac{\partial }{\partial t}{g_{i\overline{j}}^T}(t)=-4R^T_{i\overline{j}} (g(t))  \\
      &g^T(0)= g_0^T.
\end{aligned} \right.
\end{equation}

\begin{lemma}\label{l6.1}
Suppose $(M,\nabla ,g)$ is a complete connected Hessian manifold. Then its tangent bundle $(TM,J_\nabla,g^T)$ is a complete K{\"a}hler manifold.
\end{lemma}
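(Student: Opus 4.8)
The plan is to exploit the metric correspondence $g^{T}=\sum_{i,j}(g_{ij}\circ\pi)\,dz^{i}d\overline{z}^{j}$ and reduce completeness of $(TM,J_{\nabla},g^{T})$ to completeness of $(M,g)$, which is given. First I would fix a point $q\in TM$ with $\pi(q)=p\in M$ and analyze how the Riemannian distance on $TM$ relates to distances on $M$. Writing $z^{j}=\xi^{j}+\sqrt{-1}\xi^{n+j}$ with $\xi^{i}=x^{i}\circ\pi$ and $\xi^{n+j}=dx^{j}$ in an affine chart, the metric $g^{T}$ in real coordinates has the block form $\mathrm{diag}\big((g_{ij}\circ\pi),(g_{ij}\circ\pi)\big)$ with respect to the frame $\{\partial_{\xi^{1}},\dots,\partial_{\xi^{n}},\partial_{\xi^{n+1}},\dots,\partial_{\xi^{2n}}\}$; in particular $g^{T}$ is constant along the fibers of $\pi$. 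The key geometric observations are: (a) $\pi\colon(TM,g^{T})\to(M,g)$ is distance-nonincreasing, since $d\pi$ kills the vertical directions and is an isometry on horizontal ones, so $d_{TM}(a,b)\ge d_{M}(\pi(a),\pi(b))$; and (b) each fiber $T_{p}M$, with the metric induced from $g^{T}$, is isometric to the flat Euclidean space $(\mathbb{R}^{n},(g_{ij}(p)))$, hence complete.

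Next I would prove completeness via Cauchy sequences (equivalently, via the Hopf–Rinow criterion that closed bounded sets are compact). Let $\{a_{k}\}$ be a Cauchy sequence in $(TM,g^{T})$. By (a), $\{\pi(a_{k})\}$ is Cauchy in $(M,g)$, hence converges to some $p\in M$ by completeness of $g$ and connectedness. Working in a fixed trivialization $TM|_{U}\cong U\times\mathbb{R}^{n}$ over a neighborhood $U$ of $p$, write $a_{k}=(\pi(a_{k}),v_{k})$ with $v_{k}\in\mathbb{R}^{n}$. Here one needs a uniform two-sided bound $\kappa_{1}g_{e}\le g^{T}\le\kappa_{2}g_{e}$ on a suitable neighborhood in $TM$: such a bound holds on $\pi^{-1}(U)$ if $U$ is chosen small (since $(g_{ij}\circ\pi)$ is continuous and independent of the fiber variable), but the Cauchy sequence lies in a possibly unbounded subset of the fiber, so one must first show the $v_{k}$ stay in a bounded region of $\mathbb{R}^{n}$. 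That follows from (b): restricting the Cauchy condition to displacement within a single fiber (after the base points have been pulled close together, using continuity of $g^{T}$ to compare the vertical metrics at nearby base points), one gets that $\{v_{k}\}$ is Cauchy for the Euclidean metric, hence converges to some $v_{\infty}\in\mathbb{R}^{n}$, and $a_{k}\to(p,v_{\infty})$ in $TM$.

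The main obstacle I anticipate is the careful handling of this last comparison: unlike the base, the fiber directions are noncompact and not a priori bounded, so one cannot simply invoke local comparability $\kappa_{1}g_{e}\le g^{T}\le\kappa_{2}g_{e}$ on a compact set. The clean way around it is to use that $g^{T}$ restricted to vertical vectors at a point $(x,v)$ equals $(g_{ij}(x))$, which is $C^{0}$-close to $(g_{ij}(p))$ once $x$ is close to $p$; a path in $TM$ from $a_{k}$ to $a_{l}$ of length $<\varepsilon$ projects to a path of length $<\varepsilon$ in $M$, so for large $k,l$ it stays in a small neighborhood of $p$ where $g^{T}\ge\frac12(g_{ij}(p))$ on all tangent vectors, giving $|v_{k}-v_{l}|_{g(p)}\le C\varepsilon$. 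Hence $\{v_{k}\}$ converges and $\{a_{k}\}$ converges in $TM$, so $(TM,g^{T})$ is complete. (Since $M$ is connected, $TM$ is connected, so completeness in the metric sense is unambiguous.) Finally, the Kähler property of $g^{T}$ is immediate from Proposition~\ref{p2.6} because $g$ is Hessian, and the integrability of $J_{\nabla}$ from flatness of $\nabla$; thus $(TM,J_{\nabla},g^{T})$ is a complete Kähler manifold.
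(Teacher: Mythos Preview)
Your proposal is correct and follows essentially the same route as the paper: use Proposition~\ref{p2.6} for the K\"ahler property, show $\pi$ is distance-nonincreasing so the base sequence converges, and then use that $g^{T}$ on $\pi^{-1}(U)$ depends only on the base point to compare with the Euclidean metric and conclude the fiber coordinates converge. If anything, you are slightly more careful than the paper about why the short curves $\gamma_{ij}$ actually stay in $\pi^{-1}(U)$ (via the projection bound), a point the paper passes over when asserting that $d_{g^{T}}$ and $d_{e}$ are equivalent on $\pi^{-1}(\overline{B(p,r)})$.
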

\begin{proof}
By Proposition \ref{p2.6}, we need only prove that $(TM,J_\nabla,g^T)$ is complete.
Let $\{P_j\} = \{(p_j, X_j)\} \subset TM$ be an arbitrary Cauchy sequence. For any $\varepsilon>0$, there exist a positive constant $N$ and
a family of curves $\gamma_{ij}:[a,b]\rightarrow TM$ with $\gamma_{ij}(a)=P_i,\gamma_{ij}(b)=P_j$ such that for all $i,j>N$, the length $L(\gamma_{ij})<\varepsilon$.
Now we consider the sequence $\{\pi (P_j)\} \subset M$ and the curves $\pi\circ\gamma_{ij}:[a,b]\rightarrow M$, where $\pi:TM\rightarrow M$ is the projection.
We find $\pi\circ\gamma_{ij}(a)=\pi(P_i),\pi\circ\gamma_{ij}(b)=\pi(P_j)$ and by straightforward calculations,
\begin{equation}
\begin{aligned}
d_g(\pi(P_i),\pi(P_j))\leq L(\pi \circ \gamma_{ij})&=\int_a^b|(\pi \circ \gamma_{ij})' (t)|_g dt\\
&\leq \int_a^b |(\pi \circ \gamma_{ij})' (t)|_{g^T}+|\gamma'_{ij}-(\pi \circ \gamma_{ij})' (t)|_{g^T} dt\\
&=L(\gamma_{ij})< \varepsilon
\end{aligned}
\end{equation}
provided $i,j > N$. It follows that $\{\pi (P_j)\}$ is a Cauchy sequence in $M$. By the completeness of $M$, there exists a point $p \in M$
such that $\pi(P_j)\rightarrow p$. Denote
\[
\ol{B(p,r)} := \{q \in M: d_g (q, p) < r\}.
\]
Fix a constant $r>0$ such that $\{\ol{B(p,r)},(x^1,\ldots,x^n)\}$ is a an affine coordinate system on $M$. Let $\lambda (q)$ and $\Lambda (q)$
be the minimal and maximal eigenvalues of $\{g_{ij} (q)\}$ for $q \in \ol{B(p,r)}$ respectively. There exists positive constants $\lambda_0$
and $\Lambda_0$ such that
\[
\lambda_0 \leq \lambda (q) \leq \Lambda (q) \leq \Lambda_0 \mbox{ for all } q \in \ol{B(p,r)}.
\]
Without loss of generality, we may assume $\{p_j\} \subset B(p,r)$ since $p_j\rightarrow p$.
Thus, $\{P_j\} \subset \pi^{-1} (\ol{B(p,r)})$. For any smooth curve $\gamma: [a, b] \rightarrow \pi^{-1} (\ol{B(p,r)})$, let
\[
\gamma' (t) = \gamma'_j (t) \frac{\partial}{\partial \xi^i} + \gamma'_{n+j} (t) \frac{\partial}{\partial \xi^{n+j}}.
\]
By the definition of $g^T$, we have,
\[
\sqrt{2\lambda_0}L_e (\gamma) \leq L (\gamma) 
= \int_a^b \sqrt{2g_{ij}\gamma'_i (t) \gamma'_j (t) + 2g_{ij}\gamma'_{n+i} (t) \gamma'_{n+j} (t)}| dt \leq \sqrt{2\Lambda_0} L_e (\gamma),
\]
where $L_e (\gamma)$ denotes the length of $\gamma$ with respect to the Euclidean metric on $\pi^{-1} (\ol{B(p,r)})$.
Denote $d_e$ to be the Euclidean distance on $\pi^{-1} (\ol{B(p,r)})$. It follows that for any $P, Q \in \pi^{-1} (\ol{B(p,r)})$,
\[
\sqrt{2\lambda_0} d_e (P,Q) \leq d_{g^T} (P, Q) \sqrt{2\Lambda_0} \leq d_e (P, Q)
\]
which means that the distances $d_e$ and $d_{g^T}$ are equivalent. Therefore, $(\pi^{-1} (\ol{B(p,r)}), d_{g^T})$ is complete.
It follows that
there exists a point $P = (p, X) \in \pi^{-1} (\ol{B(p,r)})$ such that $P_j \rightarrow P$ and $(TM,J_\nabla,g^T)$ is complete.
\end{proof}

\begin{theorem}\label{t6.2}
Let $(M, g_0)$ be a complete noncompact Hessian manifold satisfying the conditions in Corollary \ref{c5.5} with nonnegative Hessian bisectional curvature.
Then the geometric flow \eqref{e1.2} has a long time solution $g (t)$ on $M\times [0,\infty)$, and
for any $t\geq 0$, $g(t)$ is a Hessian metric with nonnegative Hessian bisectional curvature.
\end{theorem}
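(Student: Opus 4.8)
The plan is to reduce Theorem \ref{t6.2} to facts about the K\"ahler-Ricci flow on the tangent bundle, using the correspondence set up in Section 2 and Lemma \ref{l6.1}. First I would establish long-time existence. By Corollary \ref{c5.5}, the hypotheses on $g_0$ (namely \textbf{(a1)} and \textbf{(a3)}) give $S_A = S_B$, so it suffices to show $S_B = \infty$. Since $g_0$ has nonnegative Hessian bisectional curvature, Definition \ref{bisectional} gives $Q_{iijj} \le 0$ in every orthogonal frame, hence by Proposition \ref{p2.7} the Ricci tensor $R^T_{i\bar j} = \frac14 \beta_{ij} \circ \pi$ of $g^T_0$ is nonnegative, i.e. $\beta(g_0) \ge 0$ as a symmetric form. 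Therefore for \emph{every} $S > 0$ the choice $u \equiv 0$ satisfies
\[
g_0 - S\beta(g_0) + \nabla d u = g_0 - S\beta(g_0) \le g_0,
\]
which is the wrong direction; instead one should argue that since $\beta(g_0)\ge 0$, for every $S$ there is a small bounded $u$ (or simply use that the condition \eqref{e5.1} holds for all $S$ by a limiting/exhaustion argument as in \cite{r4}) so that $g_0 - S\beta(g_0)+\nabla d u \ge \theta g_0$; the key point is that nonnegativity of $\beta(g_0)$ prevents the finite-time obstruction from \eqref{e5.1} from ever being triggered, so $S_B = S_A = \infty$. This yields a solution $g(t)$ on $M \times [0,\infty)$, which by \cite{r12} remains Hessian for each $t$.

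The second and more substantial part is preservation of nonnegative Hessian bisectional curvature. Here I would pass to the tangent bundle: by Lemma \ref{l6.1}, $(TM, J_\nabla, g_0^T)$ is a complete K\"ahler manifold, and by the computation recorded just before Lemma \ref{l6.1}, $g^T(t)$ solves the (normalized) K\"ahler-Ricci flow \eqref{e6.2} on $TM$ with $g^T(0) = g_0^T$. By Proposition \ref{p2.7}, nonnegative Hessian bisectional curvature of $g_0$ is \emph{equivalent} to nonnegative bisectional curvature of $g_0^T$ (via $R^T_{i\bar j k \bar l} = -\frac12 Q_{ijkl}\circ\pi$ and the definition in Definition \ref{bisectional}). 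Now I invoke the preservation of nonnegative bisectional curvature along the K\"ahler-Ricci flow on complete noncompact K\"ahler manifolds — this is the Bando/Mok result in the compact case and its noncompact extension (available precisely because $g^T(t)$ stays uniformly equivalent to $g_0^T$ with bounded geometry, by the estimates of Lemma \ref{l4.6} together with the curvature bound $|Q_{ijkl}| \le K$) — to conclude $R^T_{i\bar j k\bar l}(g^T(t)) \ge 0$ for all $t \ge 0$. Translating back through Proposition \ref{p2.7} gives $Q_{ijkl}(g(t)) \le 0$ in every orthogonal frame, i.e. $g(t)$ has nonnegative Hessian bisectional curvature.

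A few technical points must be handled. One must check that the solution $g(t)$ produced in Theorem \ref{t1.2}/Corollary \ref{c5.5} is genuinely the one whose tangent-bundle lift solves \eqref{e6.2}; this follows because both the flow \eqref{e1.2} and the scalar reduction \eqref{e1.3} are $\pi$-pullback compatible, and uniqueness on each $U_{\rho_0}\times[0,S_\epsilon)$ forces $g^T(t) = (g(t))^T$. One must also ensure the noncompact maximum principle applies to the evolution equation of the curvature operator on $TM$; this is where completeness (Lemma \ref{l6.1}), the uniform equivalence $C^{-1}g_0 \le g(t) \le C g_0$ on compact time intervals, and the higher-order bounds from Lemma \ref{l4.6} (which transfer to $g^T(t)$ since all the quasi-coordinate constructions are $\pi$-compatible) are used — these give bounded geometry of $g^T(t)$ so that Shi-type derivative estimates and the maximum principle for tensors (in the form used by Lee-Tam \cite{r4} and Ni-Tam) are available. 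The main obstacle I expect is precisely this: carefully verifying that the noncompact preservation of nonnegative bisectional curvature along K\"ahler-Ricci flow applies in our setting, i.e. assembling the uniform-equivalence and bounded-geometry hypotheses on $g^T(t)$ from the a priori estimates of Section 4, rather than any new geometric input.
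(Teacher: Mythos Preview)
For preservation of nonnegative Hessian bisectional curvature, your approach---lift to $(TM,g_0^T)$ via Lemma~\ref{l6.1} and Proposition~\ref{p2.7} and invoke Shi's result \cite{shi97} on the K\"ahler--Ricci flow---is exactly what the paper does.

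The long-time existence step, however, has a genuine gap. You correctly compute $\beta(g_0)\ge 0$ (since $R^T_{i\bar j}=\tfrac14\beta_{ij}\circ\pi$ and nonnegative bisectional curvature forces nonnegative Ricci on $TM$), and you correctly observe that taking $u\equiv 0$ then gives \eqref{e5.1} in the wrong direction. But your attempted recovery is backwards: you write that ``nonnegativity of $\beta(g_0)$ prevents the finite-time obstruction from \eqref{e5.1} from ever being triggered,'' when in fact $\beta(g_0)\ge 0$ is precisely what \emph{creates} the potential obstruction, since $g_0-S\beta(g_0)$ may degenerate for large $S$. You provide neither a construction of a bounded $u$ nor any concrete limiting argument, so the claim $S_B=\infty$ is unsupported. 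The paper's own proof writes $\beta(g_0)\le 0$ and concludes $S_B=\infty$ in one line; under the paper's conventions this sign appears to be a slip (trace $g^{kl}Q_{ijkl}=-\tfrac12\beta_{ij}$ against $Q_{iijj}\le 0$), so that deduction is suspect there as well. What is actually needed, and what the paper's restart scaffolding on the intervals $[n,n+1]$ is pointing toward, is an iteration: from $|Q|\le K$ one gets $|\beta(g_0)|_{g_0}\le c(n)K$, hence $S_B\ge c(n,K)^{-1}>0$ with $u=0$, yielding a solution on a definite interval; one must then verify that the solution at the endpoint again satisfies the hypotheses of Corollary~\ref{c5.5} with a curvature bound that does not deteriorate, so the step size does not shrink. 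That uniform-in-time curvature control is supplied by Shi's estimates on $(TM,g^T(t))$ and is the missing ingredient in your argument.
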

\begin{proof}
First we note that if $g(t)$ is a solution of \eqref{e1.2}, then $g^T(t)$ is a solution of \eqref{e6.2}.
By Proposition \ref{p2.7}, $g_0^T$ is K{\"a}hler with nonnegative holomorphic bisectional curvature. Furthermore, using Lemma \ref{l6.1} and Shi's result \cite{shi97}, we have $g^T(t)$ is K{\"a}hler with nonnegative holomorphic bisectional curvature, hence $g(t)$ is Hessian with
nonnegative Hessian bisectional curvature.

Now we consider the solvability of \eqref{e1.2}. Since $\beta(g_0)\leq 0$, by Corollary \ref{c5.5}, we have $S_A=S_B=\infty$.
This implies the existence of a family solutions $g_n(t)$ defined on $M\times [n,n+1]$ of \eqref{e1.2} satisfying $g_n(n)=g_{n-1}(n)$
and $g_0(0)=g_0$ for $n=0,1,2,\ldots$. Consequently, there exists a long time solution $g(t)$ such that
$g(t)|_{[n,n+1]}=g_n(t)$.

\end{proof}

Now we consider the following normalization of \eqref{e1.2}
\begin{equation}\label{e6.7}
\frac{\partial }{\partial t}{g_{ij}}(t)=-\beta_{ij} (g(t))-g_{ij}(t).
\end{equation}
It is easy to verify that if $\widetilde{g}(t)$ solves \eqref{e1.2}, then
\begin{equation}\label{e6.8}
g(t)=e^{-t}\widetilde{g}\left(e^t\right)
\end{equation}
is a solution to \eqref{e6.7}. We note that $g(t)$ in \eqref{e6.8} is defined for $-\infty<t<\infty$
and $g^T (t)$ satisfies the flow
\begin{equation}
\label{e6.13}
\frac{\partial }{\partial t}{g_{i\overline{j}}^T}(t)=- 4R^T_{i\overline{j}} (g(t)) - {g_{i\overline{j}}^T}(t)
\end{equation}
on $TM$.

\begin{proposition}\label{p6.3}
Let $(M, g_0)$ be as in Theorem \ref{t6.2} such that $(TM,J_\nabla,g_0^T)$ has maximum volume growth, i.e.,  $(TM,J_\nabla,g_0^T)$
satisfies \eqref{MVG}. Let $g(t)$ be the solution to \eqref{e6.7} which is defined in \eqref{e6.8}.
Then there exist positive constants $r_1,r_2$ and $C_2$ depending only on the initial metric such that for each $p\in M,t\in [0,\infty)$, there exists a smooth
map $\Phi_t$ from the Euclidean ball $\widehat{B}_0(r_1)$ centered at the origin of $\mathbb{R}^n$ to $M$ satisfying

\hspace*{0.3cm}\emph{(i)} $\Phi_t$ is a diffeomorphism from $\widehat{B}_0(r_1)$ onto its image;

\hspace*{0.2cm}\emph{(ii)} $\Phi_t(0)=p$;

\hspace*{0.1cm}\emph{(iii)} $\Phi^*_t(g(t))(0)=g_\epsilon$;

\hspace*{0.1cm}\emph{(iv)} $\frac{1}{r_2}g_\epsilon\leq \Phi^*_t(g(t))\leq r_2g_\epsilon$ in $\widehat{B}_0(r_1)$;

\noindent where $g_\epsilon$ is the standard metric on $\mathbb{R}^n$. Moreover, the following are true:

\hspace*{0.2cm}\emph{(v)}
\begin{minipage}[t]{0.9\linewidth}
For any $t_k \rightarrow \infty$ and for any $0<r<r_1$, the family $\{\Phi_{t_k}(\widehat{B}_0(r))\}_{k\geq 1}$ exhausts $M$ and hence $M$ is simply connected.
\end{minipage}

\hspace*{0.1cm}\emph{(vi)}
\begin{minipage}[t]{0.9\linewidth}
If $T$ is large enough, then $F_{i+1}=\Phi^{-1}_{(i+1)T}\circ \Phi_{iT}$ maps $\widehat{B}_0(r_1)$ into $\widehat{B}_0(r_1)$ for each $i$, and there is $0<\delta<1$, $0<a<b<1$ such that
\[|F_{i+1}(z)|\leq \delta|x|\]
for all $x\in \widehat{B}_0(r_1)$, and
\[a|v|\leq |F'_{i+1}(0)(v)|\leq b|v|\]
for all $v$ and all $i$.
\end{minipage}

\end{proposition}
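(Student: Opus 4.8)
The plan is to transfer the problem to the tangent bundle, where \eqref{e1.2} becomes the Kähler--Ricci flow, invoke the known uniform estimates there, and pull everything back to $M$; the charts $\Phi_t$ will then be the exponential charts of the normalized metrics $g(t)$. Concretely, let $\widetilde g(t)$ be the long-time solution of \eqref{e1.2} given by Theorem \ref{t6.2}, so that $g(t)=e^{-t}\widetilde g(e^{t})$ is the solution of \eqref{e6.7} in \eqref{e6.8}, and let $\widetilde g^{T}(t)$ be the associated solution of the Kähler--Ricci flow \eqref{e6.2} on $(TM,J_\nabla,g_0^{T})$, which is complete by Lemma \ref{l6.1}, satisfies \eqref{MVG} by hypothesis, and has nonnegative holomorphic bisectional curvature along the flow by Theorem \ref{t6.2} and Proposition \ref{p2.7}. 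Under these hypotheses the Kähler--Ricci flow in the noncompact setting satisfies the standard uniform estimates (cf. Shi \cite{shi97} and Lee--Tam \cite{r4}): $\widetilde g^{T}(t)$ is uniformly equivalent to $g_0^{T}$ for all $t\ge 0$, and $|\mathrm{Rm}(\widetilde g^{T}(t))|_{\widetilde g^{T}(t)}\le C(1+t)^{-1}$. Using the identities $R^{T}_{i\bar j k\bar l}=-\tfrac12 Q_{ijkl}\circ\pi$ and $\hat R_{ijkl}=\tfrac12(Q_{ijkl}-Q_{jikl})$ from Propositions \ref{p2.4} and \ref{p2.7}, these descend to $M$: there is $C_0\ge 1$ with $C_0^{-1}g_0\le\widetilde g(t)\le C_0 g_0$ for all $t$, and $|\mathrm{Rm}(\widetilde g(t))|_{\widetilde g(t)}\le C(1+t)^{-1}$.

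Next I would work with the normalized metrics $g(t)=e^{-t}\widetilde g(e^{t})$ on $M$. Scaling gives $|\mathrm{Rm}(g(t))|_{g(t)}=e^{t}|\mathrm{Rm}(\widetilde g(e^{t}))|_{\widetilde g(e^{t})}\le e^{t}C(1+e^{t})^{-1}\le C_1$, uniformly in $t$. Moreover $(M,g_0)$ has maximal volume growth: this descends from \eqref{MVG} on $(TM,g_0^{T})$, which has nonnegative Ricci curvature and hence, by Bishop--Gromov, maximal volume growth around every point with a uniform constant, via the fibrewise product structure of $g^{T}$ over $\pi$. Combined with $C_0^{-1}g_0\le\widetilde g(s)\le C_0 g_0$, a rescaling computation gives $\mathrm{Vol}_{g(t)}(B_{g(t)}(x,1))=e^{-nt/2}\mathrm{Vol}_{\widetilde g(e^{t})}(B_{\widetilde g(e^{t})}(x,e^{t/2}))\ge v_0>0$ for all $x\in M$, $t\ge 0$. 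By the Cheeger--Gromov--Taylor estimate, $\mathrm{inj}(g(t),x)\ge \iota_0>0$ uniformly in $x$ and $t$.

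I would then fix $r_1=\tfrac12\min(\iota_0,\pi/\sqrt{C_1})$, and for each $p$ and $t$ pick a linear isometry $E_{t}\colon(\mathbb R^{n},g_\epsilon)\to(T_pM,g(t))$ carrying the standard basis to a $g(t)$-orthonormal basis, and set $\Phi_t=\exp^{g(t)}_{p}\circ E_{t}$. Then (i) holds since $r_1<\iota_0$; (ii) and (iii) are immediate since $d(\exp^{g(t)}_{p})_0=\mathrm{id}$ and $E_t$ is an isometry; and (iv) follows from the Rauch comparison theorem on $B_{g(t)}(p,r_1)$ with the two-sided sectional curvature bound $|K(g(t))|\le C_1$, producing $r_2=r_2(n,C_1,\iota_0)$ with $r_2^{-1}g_\epsilon\le\Phi_t^{*}(g(t))\le r_2 g_\epsilon$ on $\widehat B_0(r_1)$. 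For (v): since $r<r_1\le\mathrm{inj}(g(t),p)$ and $\exp_p$ is a radial isometry, $\Phi_t(\widehat B_0(r))=B_{g(t)}(p,r)=B_{\widetilde g(e^{t})}(p,re^{t/2})\supseteq B_{g_0}(p,C_0^{-1/2}re^{t/2})$, so the sets $\Phi_{t_k}(\widehat B_0(r))$ eventually contain any given compact subset of $M$ as $t_k\to\infty$, hence exhaust $M$; each is diffeomorphic to a ball, hence simply connected, and any loop lies in one of them, so $\pi_1(M)=1$. For (vi): $F_{i+1}(0)=0$, and $d_{g(t)}(\Phi_t(x),p)=|x|$ by the radial isometry property; from $C_0^{-1}g_0\le\widetilde g(s)\le C_0 g_0$ one gets $e^{-T}C_0^{-2}g(iT)\le g((i+1)T)\le e^{-T}C_0^{2}g(iT)$ on $M$, hence $d_{g((i+1)T)}(\Phi_{iT}(x),p)\le e^{-T/2}C_0|x|$, which shows $\Phi_{iT}(x)$ lies in the image $B_{g((i+1)T)}(p,r_1)$ of $\Phi_{(i+1)T}$ (so $F_{i+1}$ is well defined on $\widehat B_0(r_1)$ with values there) and gives $|F_{i+1}(x)|\le\delta|x|$ with $\delta=e^{-T/2}C_0$; finally $F'_{i+1}(0)=E_{(i+1)T}^{-1}\circ E_{iT}$, whose singular values equal the square roots of the eigenvalues of $g((i+1)T)(p)$ relative to $g(iT)(p)$, hence lie in $[e^{-T/2}C_0^{-1},e^{-T/2}C_0]$, so taking $a=e^{-T/2}C_0^{-1}$, $b=\delta=e^{-T/2}C_0$ and $T>2\log C_0$ yields $0<a<b<1$ (and $0<\delta<1$), all independent of $i$.

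The main obstacle is the pair of uniform estimates for the Kähler--Ricci flow invoked at the outset — uniform equivalence of $\widetilde g^{T}(t)$ to $g_0^{T}$ together with the sharp $O((1+t)^{-1})$ decay of its curvature on the noncompact tangent bundle under nonnegative holomorphic bisectional curvature and maximal volume growth. This is precisely the technical heart of the Kähler--Ricci flow approach to uniformization, as in Lee--Tam \cite{r4}; granting it, and the routine descent of maximal volume growth from $TM$ to $M$, the remaining steps are parabolic rescaling and standard comparison geometry of exponential charts.
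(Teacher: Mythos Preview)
Your overall strategy---lift to the K\"ahler--Ricci flow on $TM$, pull estimates back to $M$, and use exponential charts---matches the paper's. But the central black box you invoke is false: the unnormalized flow $\widetilde g^{T}(t)$ is \emph{not} uniformly equivalent to $g_0^{T}$. Since $\mathrm{Ric}(\widetilde g^{T})\ge 0$ along the flow, one has $\partial_t \widetilde g^{T}\le 0$, so $\widetilde g^{T}(t)\le g_0^{T}$; but there is no uniform lower bound (indeed, if there were, your exhaustion in (v) would fail, since $B_{g(t)}(p,r)$ would then stay uniformly bounded in $g_0$). This breaks your volume non-collapsing step $\mathrm{Vol}_{g(t)}(B_{g(t)}(x,1))\ge v_0$ as written, and hence the Cheeger--Gromov--Taylor injectivity radius bound; it also makes your constants $a=e^{-T/2}C_0^{-1}$, $b=e^{-T/2}C_0$ in (vi) meaningless.

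The paper avoids this by not trying to redo the non-collapsing argument on $M$. It quotes the uniform injectivity radius bound for the \emph{normalized} K\"ahler--Ricci flow on $(TM,g^{T}(t))$ directly from Chau--Tam \cite{Tam06} (whose input is the $O(t^{-1})$ curvature decay and preservation of maximal volume growth along the flow, not uniform metric equivalence), and then observes that the zero section $M\hookrightarrow TM$ is \emph{totally geodesic} for $g^{T}(t)$, so $\exp^{T}_{P}|_{T_pM}=\exp^{g(t)}_p$. This immediately transfers the injectivity radius bound and the normal-coordinate metric bounds from $TM$ to $M$, giving (i)--(iv); parts (v)--(vi) are then taken over verbatim from \cite{Tam06}.

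Your direct route on $M$ can be repaired, but with different inputs. For (v) only the upper bound $\widetilde g(t)\le g_0$ is needed, which follows from monotonicity. For the injectivity radius, replace ``uniform equivalence'' by preservation of the asymptotic volume ratio under Ricci flow with nonnegative Ricci and bounded curvature; this yields $\mathrm{Vol}_{\widetilde g(s)}(B_{\widetilde g(s)}(x,R))\ge c R^{n}$ uniformly, and your rescaling then gives $\mathrm{Vol}_{g(t)}(B_{g(t)}(x,1))\ge v_0$. For (vi), use the uniform bound $0\le \beta(g(t))\le C_1 g(t)$ on the normalized flow (from the curvature bound) to get $e^{-(C_1+1)T}g(iT)\le g((i+1)T)\le e^{-T}g(iT)$, which gives $a=e^{-(C_1+1)T/2}$ and $b=\delta=e^{-T/2}$ independent of $i$.
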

\begin{proof}
Since $g^T (t)$ satisfies the equation \eqref{e6.13}, by Corollary 2.1 of \cite{Tam06}, there exists a constant $r_0>0$ depending only the
initial metric $g^T_0$ such that
the injectivity radius of $(TM,J_\nabla,g^T(t))$ is bounded below by $r_0$. For each $t \geq 0$ and $p \in M$, let
\[
\exp_P^T: \widehat{B}_0^T(r_0) \subset T_P(TM) \rightarrow B^T_P (r_0)
\]
be the exponential map on $(TM, g^T (t))$ near $P = (p, 0) \in TM$,
where $B^T_P (r_0)$ is a geodesic ball centered at $P$ with radius $r_0$ in $(TM, g^T(t))$.
We choose a complex normal coordinates $\{x_1, \ldots, x_{2n}\}$ on $B^T_P (r_0)$, such that $g^T_{i\ol j} (t,P) = \delta_{ij}$, $dg^T_{i\ol j} (t,P) = 0$
and $\partial z_k \partial z_l g^T_{i\ol j} (t,P) = 0$. By \cite{Ha95}, the components of the metric $g^T(t)$ with respect to coordinates $x_i$ satisfy
\[|\delta_{ij}-g^T_{ij}(t)|\leq C_2|x|^2,\;\;\;\frac{1}{2}\delta_{ij}\leq g^T_{ij}(t)\leq 2\delta_{ij}\]
in $B_P^T(r_1)$ for some positive constant $r_1,C_2$ depending only on $C_1, r_0$ and $n$, where $|x|^2=\sum_i(x_i)^2$.
Note that
\[
\begin{aligned}
F: M & \rightarrow TM\\
p & \mapsto (p, 0)
\end{aligned}
\]
is an embedding. In this setting we claim that $\exp_P^T|_{T_p M}=\exp_p$, where $\exp_p$ is the exponential map on $(M, g(t))$ near $p$.
Indeed, it is easy to find that $(M, g(t))$ is a totally geodesic submanifold of $(TM, g^T(t))$. It follows that any geodesic in $(M, g(t))$
is also a geodesic in $(TM, g^T(t))$.
Therefore, we have $\exp_P^T|_{T_pM}=\exp_p$.
Furthermore, we have $B_p (r_0) \subset B_P^T (r_0)$ and $(\exp_P^T)^{-1} (B_p (r_0)) \subset T_p M$ by the completeness of $M$,
where $B_p (r_0)$ is the geodesic ball centered at $p$ with radius $r_0$ in $(M, g(t))$.
Thus, the map $\Phi_t:=\exp_p|_{\widehat{B}_0(r_1)}$ satisfies conditions (i)-(iv).

Using same arguments to Corollary 2.2 in \cite{Tam06}, we can prove (v) and (vi).

\end{proof}

In the following, we will study the asymptotic behavior of the geometric flow and use the maps $\Phi_t$ to construct a diffeomorphism from $M$ to $\mathbb{R}^n$.

\begin{proposition}\label{p6.4}
Let $(M, g_0)$, and $g(t)$ be as in Proposition \ref{p6.3}. Let $p\in M$ be a fixed point in $M$ and $\lambda_1(t)\geq \cdot\cdot\cdot \geq \lambda_n(t)\geq 0$
be the eigenvalues of $\beta_{ij}(p,t)$ with respect to $g_{ij}(p,t)$. Then we have

\hspace*{0.1cm}\emph{(i)} For $1\leq i\leq n$ the limit $\lim_{t\rightarrow \infty} \lambda_i(t)$ exists.

\emph{(ii)}
\begin{minipage}[t]{0.92\linewidth}
Let $\mu_1>\cdot\cdot\cdot >\mu_l\geq 0$ be the distinct limits in (i) and $\rho>0$ be a constant
such that $[\mu_k-\rho,\mu_k+\rho]$, $1\leq k\leq l$ are disjoint. For any $t$, let $E_k(t)$ be the sum of the eigenspaces corresponding to the
eigenvalues $\lambda_i(t)$ such that $\lambda_i(t) \in (\mu_k-\rho,\mu_k+\rho)$. Let $P_k(t)$ be the orthogonal projection (with respect to $g(t)$)
onto $E_k(t)$. Then there exists $T>0$ such that if $t>T$ and if $w\in T_p(M)$, $|P_k(t)(w)|_t$ is continuous in $t$,
where $|\cdot|_t$ is the length measured with respect to the metric $g(p,t)$.
\end{minipage}
\end{proposition}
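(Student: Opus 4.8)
The plan is to transfer the question entirely to the K\"ahler-Ricci flow $g^T(t)$ on the tangent bundle and apply the corresponding results of Lee-Tam \cite{r4} (which in turn go back to Ni-Tam and Chau-Tam). The key observation is that by Proposition \ref{p2.7}, $R^T_{i\bar j}(g(t)) = \frac14 \beta_{ij}(g(t))\circ\pi$, so the eigenvalues of $\beta_{ij}(p,t)$ with respect to $g_{ij}(p,t)$ are, up to the fixed factor $4$, exactly the eigenvalues of the Chern--Ricci (here Ricci, since $g^T(t)$ is K\"ahler) tensor of $g^T(t)$ at the point $P=(p,0)$ with respect to $g^T(t)$. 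Thus statements (i) and (ii) are literally the restrictions to $P\in TM$ of the analogous statements for the normalized K\"ahler-Ricci flow \eqref{e6.13}, and by Theorem \ref{t6.2} this flow has a long-time solution with nonnegative holomorphic bisectional curvature which is complete by Lemma \ref{l6.1}.

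First I would establish (i). Differentiate the evolution equation for the Ricci eigenvalues: along \eqref{e6.13}, a computation (Hamilton's ODE for the evolution of $R^T_{i\bar j}$ under K\"ahler-Ricci flow, combined with the $-g^T_{i\bar j}$ term from the normalization) shows that at the fixed point $P$, using a frame diagonalizing $\beta$, each eigenvalue $\lambda_i(t)$ satisfies a differential inequality of the form $\frac{d}{dt}\lambda_i \le C\lambda_i^2$ or is otherwise controlled so that $\lambda_i(t)$ is monotone up to a correction that decays; more precisely, the standard fact (see \cite{r4}, and originally the trace estimates for K\"ahler-Ricci flow) is that under the normalized flow with nonnegative bisectional curvature the scalar curvature, hence each Ricci eigenvalue, is uniformly bounded, and that $e^{t}\cdot(\text{something})$ is monotone so that $\lim_{t\to\infty}\lambda_i(t)$ exists. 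The cleanest route is to invoke the result of Ni-Tam / Lee-Tam directly: for the normalized K\"ahler-Ricci flow on a complete noncompact K\"ahler manifold with bounded nonnegative bisectional curvature, the eigenvalues of $R^T_{i\bar j}$ with respect to $g^T$ converge at each point. The hypotheses of Corollary \ref{c5.5} together with Theorem \ref{t6.2} supply exactly the bounded-geometry and curvature-bound conditions needed.

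For (ii), once (i) gives the distinct limits $\mu_1>\cdots>\mu_l\ge0$ and the disjoint intervals $(\mu_k-\rho,\mu_k+\rho)$, for $t$ large each $\lambda_i(t)$ lies in exactly one such interval, so the spectral projection $P_k(t)$ onto $E_k(t)$ is given by a Riesz-type contour integral $P_k(t)=\frac{1}{2\pi\sqrt{-1}}\oint_{\Gamma_k}(\zeta I - A(t))^{-1}\,d\zeta$, where $A(t)$ is the self-adjoint-with-respect-to-$g(t)$ endomorphism $g^{ik}(p,t)\beta_{kj}(p,t)$ and $\Gamma_k$ is a fixed circle enclosing $\mu_k$ but none of the other $\mu_j$. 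Since $g(p,t)$ and $\beta(p,t)$ depend smoothly (in particular continuously) on $t$ along the flow, $A(t)$ and hence $(\zeta I - A(t))^{-1}$ is continuous in $t$ uniformly for $\zeta\in\Gamma_k$, so $P_k(t)$ is continuous in $t$, and therefore for fixed $w\in T_pM$ the quantity $|P_k(t)(w)|_t^2 = g(p,t)(P_k(t)w, P_k(t)w)$ is continuous in $t$. This is the step I expect to require the most care: one must make sure the projections $P_k(t)$ are with respect to the time-dependent metric $g(t)$ (so $A(t)$ is genuinely $g(t)$-self-adjoint and has real spectrum, justifying the functional calculus), and that the choice of $T$ can be made uniform, which follows from (i) since the eigenvalues eventually stay inside the chosen intervals. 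The main obstacle is thus not a single hard estimate but rather correctly setting up the spectral-projection formalism for a family of metrics varying along the flow and verifying the continuity is uniform enough to conclude; the existence of $\lim\lambda_i(t)$ in (i), which relies on the convergence results for normalized K\"ahler-Ricci flow with nonnegative bisectional curvature, is the technical heart.
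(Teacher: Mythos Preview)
Your proposal is correct and follows essentially the same route as the paper: transfer the problem to the normalized K\"ahler--Ricci flow $g^T(t)$ on $TM$ via Proposition~\ref{p2.7} and invoke the known eigenvalue-convergence and spectral-projection results for that flow. The only discrepancy is bibliographic: the paper cites Proposition~3.1 of Chau--Tam \cite{Tam06} directly (the K\"ahler--Ricci flow source), whereas you point to Lee--Tam \cite{r4}; since $g^T(t)$ is genuinely K\"ahler here, \cite{Tam06} is the sharper reference, though you do acknowledge the Chau--Tam lineage. Your Riesz-projection argument for (ii) is exactly what underlies the cited result and is more explicit than the paper's one-line appeal to \cite{Tam06}.
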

\begin{proof}
By Proposition 3.1 in \cite{Tam06} and the relationship between the flows \eqref{e6.7} and \eqref{e6.13}, the proposition follows.

\end{proof}

For any nonzero vector $v\in T_p(M)$, let $v(t)=v/|v|_t$ where $|v|_t$ is the length of $v$ with respect to $g(t)$ and let $v_i(t)=P_i(t)v(t)$.
Similar to Theorem 4.1 in \cite{Tam06}, we have

\begin{theorem}\label{t6.5}
Let $g(t)$ be as in Proposition \ref{p6.3}. With the same notation as above, $V=T_p(M)$ can be decomposed orthogonally with respect to $g(0)$ as $V_1\oplus \cdot\cdot\cdot \oplus V_l$ so that the following are true:

\hspace*{0.1cm}\emph{(i)}
\begin{minipage}[t]{0.92\linewidth}
If $v$ is a nonzero vector in $V_i$ for some $1\leq i\leq l$, then $\lim_{t\rightarrow \infty} |v_i(t)|=1$ and thus $\lim_{t\rightarrow \infty} \beta(v(t),v(t))=\mu_i$ and
\[\underset{t\rightarrow \infty}{\lim}\frac{1}{t}\log\frac{|v|^2_t}{|v|^2_0}=-\mu_i-1\]
Moreover, the convergences are uniform over all $v\in V_i\setminus\{0\}$.
\end{minipage}

\emph{(ii)}
\begin{minipage}[t]{0.92\linewidth}
For $1\leq i,j\leq l$ and for nonzero vectors $v\in V_i$ and $w\in V_j$ where $i\neq j$, $\lim_{t\rightarrow \infty}\langle v(t),w(t)\rangle_t=0$ and the convergence is uniform over all such nonzero vectors $v,w$.
\end{minipage}
\end{theorem}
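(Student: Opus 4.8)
The approach is to follow the scheme of Theorem 4.1 in \cite{Tam06}, with the K\"{a}hler--Ricci flow computation used there replaced by its real analogue for the normalized flow \eqref{e6.7}; everything takes place at the fixed point $p$, where \eqref{e6.7} degenerates to an evolution of the single positive symmetric form $g_{ij}(p,t)$. The starting point is the identity obtained by differentiating the length of a fixed vector: for any nonzero $v\in T_p(M)$, \eqref{e6.7} gives
\[
\frac{d}{dt}\log|v|^2_t=-\beta(v(t),v(t))-1,
\]
where $v(t)=v/|v|_t$ and $\beta(v(t),v(t))=\langle A(t)v(t),v(t)\rangle_t$, with $A(t)$ the $g(t)$--self-adjoint endomorphism of $T_p(M)$ represented by $\beta_{ij}(p,t)$, whose eigenvalues are the $\lambda_i(t)\ge 0$ of Proposition \ref{p6.4}. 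For $t$ large the spectrum of $A(t)$ lies in $\bigcup_k(\mu_k-\rho,\mu_k+\rho)$, so the spectral projection $P_k(t)$ onto the cluster near $\mu_k$ is well defined; keeping track of the gap $\rho$ and using Proposition \ref{p6.4}, one also gets a uniform bound $\|\dot P_k(t)\|\le C$ for $t$ large.

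Next I would analyze the dynamics of the components. Fix $v\ne 0$, put $v_k(t)=P_k(t)v(t)$ and $a_k(t)=|v_k(t)|^2_t$, so $\sum_k a_k(t)=1$ and $|\beta(v(t),v(t))-\sum_k\mu_k a_k(t)|<\rho$. Differentiating $\log|P_k(t)v|^2_t$, using $P_k\dot P_kP_k=0$ to cancel the leading $\dot P_k$ contribution, the bound above for the remainder, and the identity for $\log|v|_t^2$, one obtains
\[
\frac{d}{dt}\log a_k(t)=\beta(v(t),v(t))-\mu_k+o(1)=\sum_{m}(\mu_m-\mu_k)\,a_m(t)+o(1).
\]
This is a replicator-type system on the simplex $\{a_k\ge 0,\ \sum_k a_k=1\}$; since $\mu_1>\cdots>\mu_l$, a monotonicity argument along these ODEs (exactly as in \cite{Tam06}) shows that all $a_k(t)$ tend to $0$ except one, say the $k(v)$-th, which tends to $1$. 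Hence $\beta(v(t),v(t))\to\mu_{k(v)}$, and integrating the basic identity and averaging in $t$ gives $\frac{1}{t}\log\frac{|v|^2_t}{|v|^2_0}\to-\mu_{k(v)}-1$. The subspaces $U_k:=\{0\}\cup\{v:k(v)\le k\}$ are nested, $\{0\}=U_0\subseteq U_1\subseteq\cdots\subseteq U_l=T_p(M)$, and turn out to be linear (this is where the additivity of the spectral components is used again, as in \cite{Tam06}); putting $V_k:=U_k\cap U_{k-1}^{\perp}$, with $\perp$ taken relative to $g(0)$, yields the decomposition $T_p(M)=V_1\oplus\cdots\oplus V_l$, orthogonal with respect to $g(0)$, in which every nonzero $v\in V_k$ satisfies $k(v)=k$.

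Granting this, the two assertions follow quickly. For (i): if $0\ne v\in V_i$ then $k(v)=i$, so $|v_i(t)|=\sqrt{a_i(t)}\to 1$, hence $\beta(v(t),v(t))\to\mu_i$ and $\frac{1}{t}\log\frac{|v|^2_t}{|v|^2_0}\to-\mu_i-1$; all the bounds entering the ODE analysis depend only on $\rho$, $n$ and $C$, so, after scaling to the unit sphere of $V_i$, these convergences are uniform over $v\in V_i\setminus\{0\}$. For (ii): write $\langle v(t),w(t)\rangle_t=\sum_{k,m}\langle P_k(t)v(t),P_m(t)w(t)\rangle_t$; the terms with $k\ne m$ vanish because $E_k(t)\perp E_m(t)$ with respect to $g(t)$, and the term with $k=m$ is bounded by $|v_k(t)|\,|w_k(t)|$, which tends to $0$ when $v\in V_i$, $w\in V_j$ with $i\ne j$, since $k$ cannot equal both $i$ and $j$ and whichever factor carries the ``wrong'' index tends to $0$; the uniformity is obtained exactly as in (i).

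The main obstacle is making the estimate $\frac{d}{dt}\log a_k(t)=\sum_m(\mu_m-\mu_k)\,a_m(t)+o(1)$ rigorous and extracting from it that a single cluster absorbs all the mass: this requires the uniform bound on $\|\dot P_k(t)\|$ — equivalently, uniform control of the spectral gaps of $A(t)$ along the flow, which Proposition \ref{p6.4} supplies — together with a careful monotonicity argument on the simplex carried out uniformly in $v$, and the verification that the $U_k$ are genuinely linear. All of this is the content of Theorem 4.1 in \cite{Tam06}, and the argument carries over once the basic identity for \eqref{e6.7} above is substituted for the K\"{a}hler--Ricci computation used there, via the correspondence between \eqref{e6.7} and \eqref{e6.13}.
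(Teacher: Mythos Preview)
Your proposal is correct and follows the same approach as the paper: both reduce the statement to Theorem 4.1 of \cite{Tam06}, invoking the correspondence between \eqref{e6.7} and \eqref{e6.13} to transfer the K\"{a}hler--Ricci flow argument to the present setting. The paper in fact omits the proof entirely with a reference to \cite{Tam06}, so your sketch of the replicator-type dynamics for the spectral components and the construction of the $V_k$ via nested subspaces goes further than the paper does.
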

\begin{proof}
Since we can use the same procedure of Theorem 4.1 in \cite{Tam06} with only minor modifications to prove Theorem \ref{t6.5},
we omit the proof here. The reader is referred to \cite{Tam06} for details.

\end{proof}

Given $T>0$, we define $F_{i+1}=\Phi^{-1}_{(i+1)T}\circ \Phi_{iT}$.
Then for each $i$, $F_i$ is a smooth map from $D(r_1)$ into $\mathbb{R}^n$ and is a diffeomorphism onto its image,
where $D(r_1) :=\{x\in \mathbb{R}^n: |x|<r_1\}$.
Let $A_i=F'_i(0)$ be the Jacobian matrix of $F_i$ at 0. By Proposition \ref{p6.3}, $T>0$ can be chosen large enough such that
\begin{equation}\label{e6.11}
F_i(D(r_1))\subset D(r_1),\;|F_i(x)|\leq \delta|x|\;\;{\rm for\; some}\;\;0<\delta<1.
\end{equation}
Since $\beta_{ij}\geq 0$ for all $t$, we have
\begin{equation}\label{e6.12}
a|v|\leq |A_i(v)|\leq b|v|
\end{equation}
for some $0<a<b<1$ and for all $i$. Note that $a,b,\delta$ are independent of $i$.
Now we construct a global diffeomorphism from $M$ to $\mathbb{R}^n$ as in the complex case \cite{Tam06}.

First, we reverse the order of $\lambda_i$ and hence the order of $\mu_k$ in Proposition \ref{p6.4} for convenience, where
$0\leq \lambda_1(t)\leq \lambda_2(t)\leq \cdot\cdot\cdot\leq \lambda_n(t)$ denote
the eigenvalues of $\beta_{ij}(t)$ with respect to $g(t)$ and $0\leq \mu_1<\mu_2 \cdot\cdot\cdot<\mu_l$ are their limits.
Let $\rho>0$ and $E_k(t)$, $1\leq k\leq l$ be as in Proposition \ref{p6.4} and $P_k(t)$
be the orthogonal projection onto the space $E_k(t)$ with respect to $g(t)$. Let $\tau_k=e^{-(\mu_k+1)T}$, $1\leq k\leq l$.

By Theorem \ref{t6.5}, $T_p(M)$ can be decomposed orthogonally with respect to the initial metric as $E_1\oplus \cdot\cdot\cdot \oplus E_l$ such that if $v\in E_k$ and $w\in E_j$ are nonzero vectors and $v(t)=v/|v|_t$, $w(t)=w/|w|_t$ where $|\cdot|_t$ is the norm taken with respect to $g(t)$, then
\begin{equation}
\underset{t\rightarrow \infty}{\lim}|P_k(t)v(t)|_t=1,\;\;{\rm for}\;\;1\leq k\leq l\;\;{\rm and}\;\;\underset{t\rightarrow \infty}{\lim}\langle v(t),w(t)\rangle_t=0\;\;{\rm for \;all}\;\;j\neq k,
\end{equation}
where $\langle \cdot,\cdot\rangle_t$ is the inner produce with respect to $g(t)$. Furthermore, the convergence is uniform.

For any $i$, define $E_{i,k}=d\Phi_{iT}^{-1}(E_k)$, $1\leq k\leq l$, $A(i)=A_i\cdot\cdot\cdot A_1$ and $A(i+j,i)=A_{i+j}\cdot\cdot\cdot A_{i+1}$.
Then we have $E_{i,k}=A(i)(E_{1,k})$ and $A_{i+1}(E_{i,k})=E_{i+1,k}$.

The following Lemma \ref{l6.6}-\ref{l6.10} can be proved using almost the same methods as Lemma 5.1-5.5 respectively in \cite{Tam06} where
the complex case was considered, so we omit their proofs.
\begin{lemma}
\label{l6.6}
Given $\epsilon>0$, there exists $i_0$ such that if $i\geq i_0$, then the following are true:

\hspace*{0.1cm}\emph{(i)}
\begin{minipage}[t]{0.92\linewidth}
$(1-\epsilon)\tau_k|v|^2\leq |A_{i+1}(v)|^2\leq (1+\epsilon)\tau_k|v|^2$ for all $v\in E_{i,k}$ and $1\leq k\leq l$, where $\tau_k=e^{-(\mu_k+1)T}$.
\end{minipage}

\emph{(ii)}
\begin{minipage}[t]{0.92\linewidth}
For any nonzero vector $v\in \mathbb{R}^n$
\[(1-\epsilon)\leq \frac{|v|^2}{\sum_{k=1}^l|v_k|^2}\leq (1+\epsilon)\]
where $v=\sum_{k=1}^l v_k$ is the decomposition of $v$ in $E_{i,1}\oplus \cdot\cdot\cdot \oplus E_{i,l}$.
\end{minipage}
\end{lemma}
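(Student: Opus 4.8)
The plan is to translate the two assertions about the linear maps $A_{i+1}$ on $\mathbb{R}^n$ into assertions about the evolving metrics $g(t)$ at the fixed point $p$, using the normalization $\Phi_t^*(g(t))(0)=g_\epsilon$ from Proposition \ref{p6.3}, and then to read them off from the asymptotics in Theorem \ref{t6.5}. First I would record the dictionary: since $F_{i+1}(0)=0$ and $d\Phi_{sT}|_0\colon(\mathbb{R}^n,g_\epsilon)\to(T_pM,g(sT))$ is a linear isometry for every integer $s\geq0$, the matrix $A_{i+1}=F'_{i+1}(0)=(d\Phi_{(i+1)T}|_0)^{-1}\circ d\Phi_{iT}|_0$ satisfies, for $v\in\mathbb{R}^n$ and $w:=d\Phi_{iT}|_0(v)$,
\[|v|^2=|w|^2_{iT},\qquad |A_{i+1}(v)|^2=|w|^2_{(i+1)T},\qquad \langle v,v'\rangle=\langle w,w'\rangle_{iT},\]
the last relation for the images $w,w'$ of $v,v'$. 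Moreover $v\in E_{i,k}=d\Phi_{iT}^{-1}(E_k)$ precisely when $w\in E_k$. Hence $|A_{i+1}(v)|^2/|v|^2=|w|^2_{(i+1)T}/|w|^2_{iT}$, and both (i) and (ii) become estimates on the norms and inner products of \emph{fixed} vectors of $T_pM$, compared in the metrics $g(iT)$ and $g((i+1)T)$.

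To prove (i), I would differentiate $|w|^2_t=g_{ij}(p,t)w^iw^j$ along \eqref{e6.7}, which gives $\frac{d}{dt}\log|w|^2_t=-\beta(w(t),w(t))-1$ with $w(t)=w/|w|_t$. By Theorem \ref{t6.5}(i), $\beta(w(t),w(t))\to\mu_k$ uniformly over $w\in E_k\setminus\{0\}$, so for given $\eta>0$ there is $t_0$ with $\left|\frac{d}{dt}\log|w|^2_t+\mu_k+1\right|<\eta$ for all $t\geq t_0$ and all such $w$. Integrating over $[iT,(i+1)T]$ whenever $iT\geq t_0$ yields
\[e^{-(\mu_k+1)T-\eta T}\leq\frac{|w|^2_{(i+1)T}}{|w|^2_{iT}}\leq e^{-(\mu_k+1)T+\eta T},\]
and choosing $\eta$ with $e^{\eta T}\leq1+\epsilon$ and $e^{-\eta T}\geq1-\epsilon$ gives (i) with $i_0=\lceil t_0/T\rceil$.

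For (ii), I would write $v=\sum_k v_k$ with $v_k\in E_{i,k}$ and put $w_k:=d\Phi_{iT}|_0(v_k)\in E_k$, so that $|v|^2=\sum_k|v_k|^2+\sum_{j\neq k}\langle w_j,w_k\rangle_{iT}$. By Theorem \ref{t6.5}(ii), for $j\neq k$ the $g(t)$-inner product of $w_j/|w_j|_t$ and $w_k/|w_k|_t$ tends to $0$ uniformly, so $|\langle w_j,w_k\rangle_{iT}|\leq\eta|w_j|_{iT}|w_k|_{iT}=\eta|v_j||v_k|$ once $iT\geq t_0$; summing and using $2|v_j||v_k|\leq|v_j|^2+|v_k|^2$ bounds the off-diagonal terms by $(l-1)\eta\sum_k|v_k|^2$, and taking $\eta<\epsilon/(l-1)$ (the case $l=1$ being trivial) gives (ii). The final $i_0$ is the larger of the two thresholds.

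The one genuine obstacle is that Theorem \ref{t6.5} is stated as a list of limits as $t\to\infty$, while Lemma \ref{l6.6} requires two-sided bounds on each single time-step $[iT,(i+1)T]$; the fix, as above, is to pass to the derivative $\frac{d}{dt}\log|w|^2_t=-\beta(w(t),w(t))-1$ (and to the normalized inner products), for which Theorem \ref{t6.5} does supply uniform convergence, and then to integrate over an interval of the fixed length $T$. The rest is bookkeeping with the linear isometries $d\Phi_{iT}|_0$, as in the proof of Lemma 5.1 of \cite{Tam06}.
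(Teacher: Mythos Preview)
Your proposal is correct and follows essentially the same approach as the paper, which omits the proof and refers to Lemma~5.1 of \cite{Tam06}: the dictionary via the linear isometries $d\Phi_{iT}|_0$, the differentiation $\frac{d}{dt}\log|w|_t^2=-\beta(w(t),w(t))-1$ combined with the uniform convergence in Theorem~\ref{t6.5}(i), and the control of the cross terms via Theorem~\ref{t6.5}(ii) are exactly the ingredients of that argument transported to the present real/Hessian setting.
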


We use similar notations as \cite{Tam06}. Let $\Phi$ be a polynomial map from $\mathbb{R}^n$ into $\mathbb{R}^n$,
which means that each component of $\Phi$ is a polynomial function.
Suppose $\Phi$ is of homogeneous of degree $m$, meaning that each component of $\Phi$ is a
homogeneous polynomial of degree $m\geq 1$. We define
\[\|\Phi\|=\underset{v\in \mathbb{R}^n,v\neq 0}{\sup}\frac{|\Phi(v)|}{|v|^m}.\]
If, in general, $\Phi$ is a polynomial map with $\Phi(0)=0$, let $\Phi=\sum_{m=1}^q \Phi_m$ be the decomposition of $\Phi$
such that $\Phi_m$ is homogeneous of degree $m$, $\|\Phi\|$ is defined by
\[\|\Phi\|=\sum_{m=1}^q \|\Phi_m\|.\]
If we decompose $\mathbb{R}^n$ as $E_{i,1}\oplus \cdot\cdot\cdot \oplus E_{i,l}$, we will denote $\mathbb{R}^n$ by $\mathbb{R}^n_i$. Let $\Phi:\mathbb{R}^n_i\rightarrow \mathbb{R}^n_{i+1}$ be a map. We can decompose $\Phi$ as $\Phi(v)=\sum_{k=1}^l \Phi_k(v)=\Phi_1\oplus \cdot\cdot\cdot\oplus \Phi_l$ where $\Phi_k(v)\in E_{i+1,k}$. Let $\alpha=(\alpha_1,\cdot\cdot\cdot,\alpha_l)$ be a multi-index such that $|\alpha|=\sum_{k=1}^l \alpha_k=m\geq 1$.
A polynomial map $\Phi$ is said to be \emph{homogeneous of degree} $\alpha$ if
\[\Phi(c_1v_1\oplus\cdot\cdot\cdot\oplus c_lv_l)=c^{\alpha}\Phi(v_1\oplus\cdot\cdot\cdot\oplus v_l),\]
where $v_k \in E_{i,k}$. Thus, if $\Phi$  homogeneous of degree $\alpha$,
then $\Phi$ is homogeneous of degree $|\alpha|$ in the usual sense.
$\Phi$ is said to be \emph{lower triangular}, if $\Phi_k(v_1\oplus\cdot\cdot\cdot\oplus v_l)=c_kv_k+\Psi_k(v_1\oplus\cdot\cdot\cdot\oplus v_{k-1})$.

\begin{lemma}
Let $\Phi:\mathbb{R}^n_i\rightarrow \mathbb{R}^n$ be homogeneous of degree $\alpha=(\alpha_1,\cdot\cdot\cdot,\alpha_l)$ with $|\alpha|=m$. Then
\[|\Phi(v_1\oplus\cdot\cdot\cdot\oplus v_l)|\leq l^m\|\Phi\||v_1|^{\alpha_1}\cdot\cdot\cdot|v_l|^{\alpha_l}.\]
Here by convention if $\alpha_i=0$, then $|v_i|^{\alpha_i}=1$ for all $v_i$.
\end{lemma}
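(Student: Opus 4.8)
The plan is to reduce the whole statement to the mere definition of $\|\Phi\|$ by a rescaling argument that exploits the $\alpha$-homogeneity. First I would dispose of the degenerate cases. If $v_k=0$ for some index $k$ with $\alpha_k\geq 1$, then by $\alpha$-homogeneity (apply the defining identity with $c_k=0$) we get $\Phi(v_1\oplus\cdots\oplus v_l)=0$ and the inequality is trivial. If $\alpha_k=0$, then the defining identity says $\Phi$ is invariant under $v_k\mapsto c v_k$ for every scalar $c$; since each component of $\Phi$ is a polynomial, this forces $\Phi$ to be independent of the $E_{i,k}$-block, which is exactly the content of the stated convention $|v_k|^{\alpha_k}=1$. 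So from now on I may assume $v_k\neq 0$ whenever $\alpha_k\geq 1$, and the blocks with $\alpha_k=0$ play no role.

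Next I would normalize. For each $k$ with $\alpha_k\geq 1$ put $w_k=v_k/|v_k|$, and for the remaining $k$ set $w_k$ to be any unit vector in $E_{i,k}$ (it is immaterial). Homogeneity of degree $\alpha$ gives
\[
\Phi(v_1\oplus\cdots\oplus v_l)=|v_1|^{\alpha_1}\cdots|v_l|^{\alpha_l}\,\Phi(w_1\oplus\cdots\oplus w_l).
\]
Writing $w:=w_1\oplus\cdots\oplus w_l=w_1+\cdots+w_l\in\mathbb{R}^n$ and using the triangle inequality in the Euclidean norm, $|w|\leq\sum_{k=1}^l|w_k|=l$, where I do not need the decomposition $\mathbb{R}^n_i=E_{i,1}\oplus\cdots\oplus E_{i,l}$ to be orthogonal. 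Since an $\alpha$-homogeneous $\Phi$ with $|\alpha|=m$ is homogeneous of degree $m$ in the usual sense, the definition of $\|\Phi\|$ yields $|\Phi(w)|\leq\|\Phi\|\,|w|^m\leq l^m\|\Phi\|$. Combining this with the displayed identity gives $|\Phi(v_1\oplus\cdots\oplus v_l)|\leq l^m\|\Phi\|\,|v_1|^{\alpha_1}\cdots|v_l|^{\alpha_l}$, as claimed.

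I do not expect a genuine obstacle here; this lemma is purely algebraic. The only points that require a little care are the bookkeeping for the blocks with $\alpha_k=0$ (handled by the convention above) and the remark that an $\alpha$-homogeneous map of total degree $m$ is honestly degree-$m$ homogeneous, so that $\|\Phi\|$ controls it pointwise with the power $|v|^m$. The factor $l^m$ is exactly the cost of passing from the individual block norms to the Euclidean norm of their sum through the crude triangle inequality, and that is all that the subsequent estimates (in the spirit of Lemma~\ref{l6.6}) will need.
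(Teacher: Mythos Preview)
Your argument is correct and is precisely the standard normalization argument used in the reference the paper defers to (Chau--Tam \cite{Tam06}, Lemma~5.2): rescale each block to a unit vector, use $\alpha$-homogeneity to pull out the factor $|v_1|^{\alpha_1}\cdots|v_l|^{\alpha_l}$, bound $|w_1+\cdots+w_l|\leq l$ by the triangle inequality, and apply the definition of $\|\Phi\|$ together with ordinary degree-$m$ homogeneity. The paper itself omits the proof, so there is nothing further to compare.
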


Note that $\tau_1>\cdot\cdot\cdot>\tau_l$. Choose $1>\epsilon>0$ small enough such that $b^2(1-\epsilon)^{-1}(1+\epsilon)<1$ where $b<1$ is the constant in \eqref{e6.12}. Since we are interested in the maps $F_i$ for large $i$, without loss of generality, we assume the conclusions of Lemma \ref{l6.6} are true for all $i$ with this $\epsilon$. Let $m_0\geq 2$ be a positive integer such that $a^{-1}b^{m_0}<\frac{1}{2}$, where $0<a<b<1$ are the constants in \eqref{e6.12}.
The following lemmas aim to assemble the maps $F_i$ to produce a global diffeomorphism from $M$ to $\mathbb{R}^n$.

\begin{lemma}\label{l6.8}
Let $\Phi_{i+1}:\mathbb{R}^n_i\rightarrow \mathbb{R}^n_{i+1}$, $1\leq i<\infty$, be a family homogeneous polynomial maps of degree $m\geq 2$ such that $\sup_i\|\Phi\|<\infty$. Then there exist homogeneous polynomial maps $H_{i+1}$ and $Q_{i+1}$ from $\mathbb{R}^n_i$ to $\mathbb{R}^n_{i+1}$ such that $\Phi_{i+1}=Q_{i+1}+H_{i+1}-A_{i+2}^{-1}H_{i+2}A_{i+1}$. Moreover, $H_{i+1}$ and $Q_{i+1}$ satisfy the following:

\hspace*{0.2cm}\emph{(i)} $\sup_i\|H_i\|<\infty$ and $\sup_i\|Q_i\|<\infty$.

\hspace*{0.1cm}\emph{(ii)} $Q_{i+1}=0$ if $m\geq m_0$.

\emph{(iii)}
\begin{minipage}[t]{0.92\linewidth}
$Q_{i+1}$ is lower triangular:
\[Q_{i+1}(v_1\oplus\cdot\cdot\cdot\oplus v_l)=0\oplus Q_{i+1,2}(v_1)\oplus Q_{i+1,3}(v_1\oplus v_2)\oplus\cdot\cdot\cdot\oplus Q_{i+1,l}(v_1\oplus\cdot\cdot\cdot\oplus v_{l-1})\]
where $v_k\in E_{i,k}$ and $Q_{i+1,k}:\mathbb{R}^n_i\rightarrow E_{i+1,k}$.
\end{minipage}

\end{lemma}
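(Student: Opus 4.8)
The plan is to read $\Phi_{i+1}=H_{i+1}-A_{i+2}^{-1}H_{i+2}A_{i+1}$ as a \emph{twisted cohomological equation} for the unknown family $(H_i)$, to solve it by a geometric series in the conjugated maps $A(i+1+j,i+1)^{-1}\circ\Phi_{i+1+j}\circ A(i+j,i)$, and to dump into $Q_{i+1}$ exactly those pieces of $\Phi_{i+1}$ for which the series fails to converge. The whole point is that the ``obstruction'' is small: since each $Q_{i+1}$ is (a sub-collection of the components of) $\Phi_{i+1}$ itself, the bound $\sup_i\|Q_i\|\le\sup_i\|\Phi_i\|<\infty$ will be automatic; the work is to show that the non-obstructed part is a bounded coboundary and that the obstruction is lower triangular of bounded degree. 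Throughout, a uniformly convergent series of homogeneous polynomial maps of the fixed degree $m$ is again such a map (the relevant space is finite dimensional), so the $H_{i+1}$ and $Q_{i+1}$ produced are automatically polynomial.

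First I would dispose of the range $m\ge m_0$. Using only the crude bounds $\|A_i\|\le b$, $\|A_i^{-1}\|\le a^{-1}$ from \eqref{e6.12} and the choice $a^{-1}b^{m_0}<\tfrac12$, set
\[
H_{i+1}:=\sum_{j=0}^{\infty}A(i+1+j,i+1)^{-1}\circ\Phi_{i+1+j}\circ A(i+j,i),
\]
with the usual convention that $A(i,i)$ is the identity. Each summand has norm at most $a^{-j}\big(\sup_i\|\Phi_i\|\big)b^{jm}\le\big(\sup_i\|\Phi_i\|\big)(a^{-1}b^m)^j$, so the series converges absolutely with $\sup_i\|H_i\|\le 2\sup_i\|\Phi_i\|<\infty$. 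A direct telescoping (reindex $j\mapsto j+1$ and use $A_{i+2}^{-1}A(i+2+j,i+2)^{-1}=A(i+2+j,i+1)^{-1}$ together with $A(i+1+j,i+1)A_{i+1}=A(i+1+j,i)$) gives $H_{i+1}-A_{i+2}^{-1}H_{i+2}A_{i+1}=\Phi_{i+1}$, so we may take $Q_{i+1}=0$; this establishes (ii).

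For $2\le m<m_0$ I would pass to the fine decomposition. Write $\Phi_{i+1}=\sum_{k=1}^{l}\sum_{|\alpha|=m}\Phi_{i+1,k,\alpha}$, where $\Phi_{i+1,k,\alpha}$ is the $E_{i+1,k}$-valued component of $\Phi_{i+1}$ that is homogeneous of multi-degree $\alpha=(\alpha_1,\dots,\alpha_l)$ in the splitting $\mathbb{R}^n_i=E_{i,1}\oplus\cdots\oplus E_{i,l}$; note that precomposition with the block-preserving maps $A(i+j,i)$ keeps the multi-degree. Call $(\alpha,k)$ \emph{resonant} if $\prod_s\tau_s^{\alpha_s}\ge\tau_k$, equivalently $\sum_s\alpha_s(\mu_s+1)\le\mu_k+1$, and \emph{non-resonant} otherwise. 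For each non-resonant pair define $H_{i+1,k,\alpha}$ by the same series as above; by Lemma~\ref{l6.6}(i) the $j$-th summand is bounded by $l^m\big(\sup_i\|\Phi_i\|\big)\Big((1+\epsilon)^m(1-\epsilon)^{-1}\prod_s\tau_s^{\alpha_s}/\tau_k\Big)^{j/2}$ times a product of powers of the $|v_s|$, and since there are only finitely many pairs $(\alpha,k)$ (as $m<m_0$, $l$ fixed) and each ratio $\prod_s\tau_s^{\alpha_s}/\tau_k$ is a fixed number $<1$, we may shrink $\epsilon$ in Lemma~\ref{l6.6} so that every such geometric ratio is $<1$. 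Summing over all non-resonant pairs and using Lemma~\ref{l6.6}(ii) yields $H_{i+1}$ with $\sup_i\|H_i\|<\infty$, and blockwise telescoping shows that the non-resonant part of $\Phi_{i+1}$ equals $H_{i+1}-A_{i+2}^{-1}H_{i+2}A_{i+1}$. Setting $Q_{i+1}$ to be the (finite) sum of the resonant components of $\Phi_{i+1}$ gives $\Phi_{i+1}=Q_{i+1}+H_{i+1}-A_{i+2}^{-1}H_{i+2}A_{i+1}$ and $\sup_i\|Q_i\|\le\sup_i\|\Phi_i\|<\infty$, proving (i).

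It remains to check the lower-triangular form (iii). Here I would use $m\ge2$ and the ordering $0\le\mu_1<\mu_2<\cdots<\mu_l$: if $(\alpha,k)$ is resonant then
\[
\sum_s\alpha_s\mu_s=\sum_s\alpha_s(\mu_s+1)-m\le\mu_k+1-m\le\mu_k-1<\mu_k ,
\]
whereas $\alpha_s\ge1$ for some index $s\ge k$ would force $\sum_t\alpha_t\mu_t\ge\alpha_s\mu_s\ge\mu_s\ge\mu_k$, a contradiction. Hence every resonant $\alpha$ has $\alpha_s=0$ for all $s\ge k$, so $Q_{i+1,k}$ depends only on $v_1,\dots,v_{k-1}$ and in particular $Q_{i+1,1}=0$; this is exactly the asserted lower-triangular structure, and a resonant $\alpha$ never occurs for $k=1$. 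I expect the main technical obstacle to be the uniform-in-$i$ bookkeeping of the $\epsilon$-errors from Lemma~\ref{l6.6} when summing the series — it is precisely the impossibility of doing this uniformly once the crude ratio $a^{-1}b^m$ can be $\ge1$ that forces the threshold at $m_0$ and the appearance of the resonant remainder $Q$ — while the resonance computation giving (iii) is short once the strict ordering of the $\mu_k$ is exploited.
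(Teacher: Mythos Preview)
Your proposal is correct and is exactly the argument the paper defers to: the paper omits the proof of this lemma and points to Lemma~5.3 of Chau--Tam \cite{Tam06}, whose method is precisely the cohomological/geometric-series construction you carry out, with the resonance dichotomy $\prod_s\tau_s^{\alpha_s}\gtrless\tau_k$ deciding which $(\alpha,k)$-pieces land in $Q$ and which are absorbed into $H$. The one place you are slightly loose is in writing $\sup_i\|\Phi_i\|$ in the bound for the $j$-th summand where $\sup_i\|\Phi_{i,k,\alpha}\|$ is what actually appears; passing from the latter to the former needs Lemma~\ref{l6.6}(ii) for the output projection onto $E_{i+1,k}$ together with a Vandermonde/polarization estimate for the multidegree extraction (compare the Claim inside the proof of Lemma~\ref{l6.11}), but this is routine and does not affect the argument.
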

\begin{lemma}\label{l6.9}
Given any $m\geq 2$, we can find constants $C(m)>0$ and $r_1\geq r_m>0$ and families of smooth maps $T_{i,m}$ from $D(r_m)\subset \mathbb{R}^n_i$ to $D(r_m)\subset \mathbb{R}^n_i$ and $G_{i+1,m}$ from $\mathbb{R}^n_i$ to $\mathbb{R}^n_{i+1}$ with the following properties:

\hspace*{0.2cm}\emph{(i)}
\begin{minipage}[t]{0.92\linewidth}
For each $i$, $T_{i+1,m}$ is a polynomial map of degree $m-1$ which is diffeomorphic to its image, $T_{i+1,m}(0)=0$, $T'_{i+1,m}=Id$
and $\|T_{i+1,m}\|\leq C(m)$.
\end{minipage}

\hspace*{0.1cm}\emph{(ii)}
\begin{minipage}[t]{0.92\linewidth}
$G_{i+1,n}=A_{i+1}+\widetilde{G}_{i+1,m}$ where $\widetilde{G}_{i+1,m}$ is a polynomial map of degree $m-1$.
\[\widetilde{G}_{i+1,m}(v_1\oplus\cdot\cdot\cdot\oplus v_l)=0\oplus \widetilde{G}_{i+1,m,2}(v_1)\oplus\cdot\cdot\cdot\oplus \widetilde{G}_{i+1,m,l}(v_1\oplus\cdot\cdot\cdot\oplus v_{l-1})\]
is lower triangular, and $\|G_{i+1,m}\|\leq C(m)$, $\widetilde{G}_{i+1,m}=0$ and $\widetilde{G}'_{i+1,m}=A_{i+1}$. Moreover, $G_{i+1,m}=G_{i+1,m_0}$
for all $m\geq m_0$, where $m_0$ is the integer in Lemma \ref{l6.8}.
\end{minipage}

\emph{(iii)} $F_{i+1}(D(r_m))\subset D(r_m)$ and
\[|T_{i+1,m}F_{i+1}(v)-G_{i+1,m}T_{i,m}(v)|\leq C(m)|v|^m.\]
\end{lemma}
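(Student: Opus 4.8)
The plan is to prove the lemma by induction on $m$, carrying out a Poincar\'e--Dulac style normalization of the transition maps $F_{i+1}$ that is uniform in $i$, following the complex case treated in Tam \cite{Tam06}. Three inputs drive the argument. First, uniform higher-order control of the $F_{i+1}$: combining the a priori estimates of Lemma \ref{l4.6} with the construction of the exponential charts $\Phi_t$ in Proposition \ref{p6.3}, the maps $F_{i+1}=\Phi^{-1}_{(i+1)T}\circ\Phi_{iT}$ and their inverses are bounded in every $C^k$ norm on $D(r_1)$ by a constant depending only on $k$ and the initial data, independent of $i$; in particular $A_{i+1}=F'_{i+1}(0)$ and $A_{i+1}^{-1}$ are uniformly bounded, and by \eqref{e6.12} they are uniformly comparable to contractions. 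Second, the eigenvalue comparison of Lemma \ref{l6.6}, together with the ordering $\tau_1>\dots>\tau_l$ and the choices of $\epsilon$ and $m_0$ fixed before it. Third, the uniform decomposition of a bounded family of homogeneous polynomial maps supplied by Lemma \ref{l6.8}.

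For the base case $m=2$, take $r_2=r_1$, $T_{i,2}=\mathrm{Id}$ on $D(r_1)$, and $G_{i+1,2}=A_{i+1}$, so that $\widetilde G_{i+1,2}=0$. Then $F_{i+1}(D(r_2))\subset D(r_2)$ by \eqref{e6.11}, and Taylor's theorem with the uniform $C^2$ bound on $F_{i+1}$ gives
\[
|T_{i+1,2}F_{i+1}(v)-G_{i+1,2}T_{i,2}(v)|=|F_{i+1}(v)-A_{i+1}v|\le C(2)|v|^2
\]
on $D(r_2)$ with $C(2)$ independent of $i$; properties (i)--(iii) are then immediate.

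For the inductive step, assume $T_{i,m}$ and $G_{i+1,m}$ have been built with (i)--(iii) at some radius $r_m$. Let $P_{i+1}\colon \mathbb{R}^n_i\to\mathbb{R}^n_{i+1}$ denote the homogeneous degree-$m$ part of the error $T_{i+1,m}F_{i+1}(v)-G_{i+1,m}T_{i,m}(v)$, which by hypothesis is $O(|v|^m)$; the uniform $C^k$ bounds give $\sup_i\|P_{i+1}\|<\infty$. By Lemma \ref{l6.8}, this degree-$m$ obstruction splits, with bounds uniform in $i$, into a lower-triangular ``resonant'' part $Q_{i+1}$, which vanishes once $m\ge m_0$, and a ``coboundary'' part expressible through homogeneous degree-$m$ maps $H_i,H_{i+1}$ and the linear maps $A_{i+1},A_{i+2}$. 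One absorbs $Q_{i+1}$ into the normal form by setting $G_{i+1,m+1}=G_{i+1,m}+Q_{i+1}$, and kills the coboundary part by correcting the change of variables as $T_{i,m+1}=T_{i,m}+H_i$; a direct expansion using $F_{i+1}(v)=A_{i+1}v+O(|v|^2)$, $T_{i,m}(v)=v+O(|v|^2)$ and the homogeneity of $H_i,H_{i+1},Q_{i+1}$ then shows that the degree-$m$ part of the new error $T_{i+1,m+1}F_{i+1}(v)-G_{i+1,m+1}T_{i,m+1}(v)$ vanishes. It follows that $G_{i+1,m}=G_{i+1,m_0}$ for $m\ge m_0$, and that $\widetilde G_{i+1,m}$ is the lower-triangular accumulation of the $Q$'s from levels below $\min(m,m_0)$, a polynomial of degree at most $m-1$. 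Since $m\ge2$ gives $H_i(0)=0$ and $H_i'(0)=0$, after shrinking to a radius $r_{m+1}\le r_m$ chosen uniformly via $\sup_i\|H_i\|<\infty$ the map $T_{i+1,m+1}$ is a diffeomorphism of $D(r_{m+1})$ onto its image with $T'_{i+1,m+1}(0)=\mathrm{Id}$ and $\|T_{i+1,m+1}\|\le C(m+1)$; shrinking $r_{m+1}$ further and using \eqref{e6.11} keeps $F_{i+1}(D(r_{m+1}))\subset D(r_{m+1})$ and $T_{i,m+1}(D(r_{m+1}))\subset D(r_m)$. Finally, the new error is a smooth map whose Taylor jet at the origin vanishes through order $m$, hence is $O(|v|^{m+1})$ on $D(r_{m+1})$ with a constant $C(m+1)$ uniform in $i$, which closes the induction.

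The main obstacle is to run this normalization with every constant --- $C(m)$, $r_m$, and the bounds on $H_i$ and $Q_i$ --- independent of $i$. This has two sources. One is the uniform $C^k$ control of the $F_{i+1}$, which in the present real setting has to be extracted from the a priori estimates of Section 4 (through Proposition \ref{p6.3}) rather than from a compactness argument. The other is the uniform solvability of the cohomological equation, i.e. the absence of a small-divisor phenomenon: this is guaranteed by the spectral gap $\tau_1>\dots>\tau_l$ of Lemma \ref{l6.6} together with the preparatory choices of $\epsilon$ with $b^2(1-\epsilon)^{-1}(1+\epsilon)<1$ and of the integer $m_0$ with $a^{-1}b^{m_0}<\frac{1}{2}$, which make the relevant linear operators uniformly invertible on complements of the resonant subspaces. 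Granting these, the remaining steps are the real-variable analogues of the computations in \cite{Tam06} and can be carried out line by line.
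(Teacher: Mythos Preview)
Your proposal is correct and follows essentially the same approach as the paper: the paper omits the proof and simply refers to Lemma~5.4 of \cite{Tam06}, and what you have written is precisely the Poincar\'e--Dulac inductive normalization carried out there, transplanted to the real setting with the cohomological equation at each step solved by Lemma~\ref{l6.8}. The only minor remark is that the uniform $C^k$ control of the $F_{i+1}$ is most cleanly obtained from the K\"ahler--Ricci flow on $TM$ (Shi's derivative estimates for the normalized flow, as used in \cite{Tam06}) rather than directly from Lemma~\ref{l4.6}, but this does not affect the argument.
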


Let $m\geq m_0$ and denote $G_{i+1,m}$ simply by $G_{i+1}$ and $\widetilde{G}_{i+1,m}$ by $\widetilde{G}_{i+1}$ etc.
Note that $G_{i+1}$ a diffeomorphism on $\mathbb{R}^n$ independent of $m$ and
degree of each $G_{i+1}$ is $m-1$. For any positive integers $i,j$, let $G(i+j,i)=G_{i+j}\cdots G_{i+1}$.
\begin{lemma}
\label{l6.10}
Let $G_{i+1}$ be as above, then its inverse is a polynomial map of degree $(m-1)^{l-1}$ and satisfies:
\[G^{-1}_{i+1}=A^{-1}_{i+1}+S_{i+1}\]
where $S_{i+1}:\mathbb{R}^n_{i+1}\rightarrow \mathbb{R}^n_{i}$ is defined by
\[S_{i+1}(w_1\oplus\cdot\cdot\cdot\oplus w_l)=0\oplus S_{i+1,2}(w_1)\oplus\cdot\cdot\cdot\oplus S_{i+1,l}(w_1\oplus\cdot\cdot\cdot\oplus w_{l-1}).\]
Moreover, $\|G^{-1}_{i+1}\|$ is bounded by a constant independent of $i$.
\end{lemma}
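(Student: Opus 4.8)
The plan is to invert $G_{i+1}$ explicitly, using the block-triangular structure provided by Lemma \ref{l6.9}, just as in the proof of Lemma 5.5 of \cite{Tam06}. Recall that $G_{i+1}=A_{i+1}+\widetilde{G}_{i+1}$, where, relative to the decompositions $\mathbb{R}^n_i=E_{i,1}\oplus\cdots\oplus E_{i,l}$ and $\mathbb{R}^n_{i+1}=E_{i+1,1}\oplus\cdots\oplus E_{i+1,l}$, the linear map $A_{i+1}$ carries $E_{i,k}$ isomorphically onto $E_{i+1,k}$ and is therefore block diagonal, while $\widetilde{G}_{i+1}$ is lower triangular of degree $m-1$, i.e.\ its $k$-th component $\widetilde{G}_{i+1,k}$ is a polynomial of degree $\le m-1$ in $v_1\oplus\cdots\oplus v_{k-1}$ only (with $\widetilde{G}_{i+1,1}=0$). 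Set $B_k:=A_{i+1}|_{E_{i,k}}$, which is invertible, and note that by \eqref{e6.12} one has $\|B_k^{-1}\|\le a^{-1}$ and $\|A_{i+1}\|\le b$ uniformly in $i$ and $k$.

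First I would solve $w=G_{i+1}(v)$ for $v=v_1\oplus\cdots\oplus v_l$ in terms of $w=w_1\oplus\cdots\oplus w_l$, recursively in $k=1,\dots,l$. The $k$-th component of the equation is $w_k=B_kv_k+\widetilde{G}_{i+1,k}(v_1\oplus\cdots\oplus v_{k-1})$, so
\[
v_k=B_k^{-1}w_k-B_k^{-1}\widetilde{G}_{i+1,k}\big(v_1\oplus\cdots\oplus v_{k-1}\big).
\]
By induction on $k$ this exhibits $v_k$ as a polynomial in $w_1,\dots,w_k$ whose sole dependence on $w_k$ is the linear term $B_k^{-1}w_k$. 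Hence $G_{i+1}^{-1}$ is a polynomial map of the form $A_{i+1}^{-1}+S_{i+1}$ with $A_{i+1}^{-1}$ the block-diagonal linear inverse and $S_{i+1,k}(w_1\oplus\cdots\oplus w_{k-1})=-B_k^{-1}\widetilde{G}_{i+1,k}(v_1\oplus\cdots\oplus v_{k-1})$, $S_{i+1,1}=0$, which is precisely the asserted triangular shape. For the degree, the same induction yields $\deg_w v_k\le(m-1)^{k-1}$: $v_1=B_1^{-1}w_1$ is linear, and if $\deg_w v_j\le(m-1)^{j-1}\le(m-1)^{k-2}$ for all $j<k$, then any monomial of $\widetilde{G}_{i+1,k}$, having total degree $\le m-1$ in its arguments, acquires $w$-degree at most $(m-1)(m-1)^{k-2}=(m-1)^{k-1}$ after substitution, while $B_k^{-1}w_k$ has degree $1$. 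Taking $k=l$ shows $G_{i+1}^{-1}$ has degree at most $(m-1)^{l-1}$.

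It remains to check that $\|G_{i+1}^{-1}\|$ is bounded independently of $i$. By \eqref{e6.12} and Lemma \ref{l6.9} the quantities $\|A_{i+1}\|$, $\|B_k^{-1}\|$ and $\|\widetilde{G}_{i+1}\|$ are all bounded by constants depending only on $a$, $b$ and $m$ (through $C(m)$), not on $i$. Using the estimate $|\Phi(v_1\oplus\cdots\oplus v_l)|\le l^{|\alpha|}\|\Phi\|\,|v_1|^{\alpha_1}\cdots|v_l|^{\alpha_l}$ for maps homogeneous of degree $\alpha$ to control the norm of each composition $B_k^{-1}\widetilde{G}_{i+1,k}(v_1\oplus\cdots\oplus v_{k-1})$, one bounds $\|v_k\|$ inductively in $k$; since the recursion has length $l$ and every intermediate degree is at most $(m-1)^{l-1}$, the combinatorial factors $l^{|\alpha|}$ produced along the way are themselves controlled in terms of $l$ and $m$ only. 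Summing over $k$ gives $\|G_{i+1}^{-1}\|\le C$ with $C$ independent of $i$.

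The one point that needs care is this last norm bookkeeping: one must verify that the constants accumulated by repeatedly composing the $B_k^{-1}$ with the $\widetilde{G}_{i+1,k}$ do not grow with $i$. This is exactly where the uniform two-sided bounds \eqref{e6.12} on the linear maps $A_i$ and the uniform bound $\|\widetilde{G}_{i+1}\|\le C(m)$ from Lemma \ref{l6.9} enter; since both the length of the recursion and the degrees involved are fixed (they depend only on $l$ and $m$), everything else is routine and follows the proof of Lemma 5.5 in \cite{Tam06}.
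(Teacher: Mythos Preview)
Your proposal is correct and follows exactly the approach the paper intends: the paper omits the proof of Lemma~\ref{l6.10} entirely, stating only that it ``can be proved using almost the same methods as Lemma 5.1--5.5 respectively in \cite{Tam06},'' and your recursive inversion via the block-triangular structure is precisely the argument of Lemma~5.5 there. Your degree induction and the uniform norm bound (using \eqref{e6.12} and the $C(m)$ bound from Lemma~\ref{l6.9}) are the right ingredients, so nothing further is needed.
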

\begin{lemma}\label{l6.11}
Let $D(R)$ be a ball in $\mathbb{R}^n$ centered at the origin with radius $R$. Then we have

\hspace*{0.2cm}\emph{(i)}
\begin{minipage}[t]{0.92\linewidth}
There exists $\beta>0$ such that for all $x,x'\in D(R)$ and any positive integers $i$ and $j$,
\[|G(i+j,i)^{-1}(x)-G(i+j,i)^{-1}(x')|\leq \beta^j|x-x'|.\]
\end{minipage}

\hspace*{0.1cm}\emph{(ii)} For any positive integer $i$ and any open set $U$ containing the origin,
\[\bigcup_{j+1}^\infty G(i+j,i)^{-1}(U)=\mathbb{R}^n.\]
\end{lemma}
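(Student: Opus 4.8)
The plan is to prove (i) and (ii) together by exploiting the normal form of the maps $G_{k}$ from Lemmas \ref{l6.9} and \ref{l6.10}. Recall that each $G_{k+1}$ and each $G_{k+1}^{-1}$ splits as a block-diagonal linear part --- $A_{k+1}$, resp.\ $A_{k+1}^{-1}$, which respects the decompositions $E_{k,\bullet}$, $E_{k+1,\bullet}$ since $A_{i+1}(E_{i,r})=E_{i+1,r}$, and whose block singular values are, uniformly in $k$, pinched between $a$ and $b$ by \eqref{e6.12} --- plus a \emph{lower-triangular} polynomial correction, $\widetilde G_{k+1}$ resp.\ $S_{k+1}$, that vanishes to second order at the origin and has norm bounded independently of $k$. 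Since any open $U\ni0$ contains a ball $D(r)$, for (ii) it suffices to take $U=D(r)$, so both parts come down to controlling how these two structures propagate through the $j$-fold composition $G(i+j,i)^{\pm1}=G_{i+j}^{\pm1}\circ\cdots\circ G_{i+1}^{\pm1}$.

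For (i) I would write $G(i+j,i)^{-1}=G_{i+1}^{-1}\circ\cdots\circ G_{i+j}^{-1}$, $G_{k+1}^{-1}=A_{k+1}^{-1}+S_{k+1}$, and prove by induction on the block index $r=1,\dots,l$ (and on $j$) that the $E_{r}$-component of $G(i+j,i)^{-1}$ is a linear map in the $E_{r}$-variable of operator norm $\le(a^{-1})^{j}$, plus a polynomial in the $E_{1},\dots,E_{r-1}$-variables whose degree is at most a constant $d_{r}$ \emph{independent of $j$} and whose coefficient norm is at most $C_{r}\gamma_{r}^{j}$, with $C_{r},\gamma_{r}$ independent of $j$. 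The crucial point is that lower-triangularity forbids the variables of blocks $\ge r$ from ever re-entering the $E_{r}$-component, so at each composition step the degree only multiplies by the fixed degree of $S_{k+1}$ and the coefficient norm only gains the fixed factor $\|A_{k+1}^{-1}\|+\|S_{k+1}\|$; hence the degree stays bounded and the coefficients grow merely geometrically in $j$, even though $G(i+j,i)$ itself has degree $(m-1)^{j}$. It follows that $G(i+j,i)^{-1}$ is a polynomial map of $j$-independent degree $d^{*}$ with $\|G(i+j,i)^{-1}\|\le C_{*}\gamma^{j}$; restricting to the convex set $D(R)$ and using that such a polynomial map has Lipschitz constant on $D(R)$ bounded by $c(n,d^{*})\max(1,R^{d^{*}-1})$ times its coefficient norm gives $\mathrm{Lip}(G(i+j,i)^{-1};D(R))\le C(R)\gamma^{j}$, whence $\beta:=\max(1,C(R))\,\gamma$ works (and $\gamma=(a^{-1})^{d^{*}}>1$, consistent with $G(i+j,i)^{-1}$ being expanding).

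For (ii) it is enough to show that for every $w\in\mathbb{R}^{n}$ one has $G(i+j,i)(w)\to0$ as $j\to\infty$: then $G(i+j,i)(w)\in D(r)\subset U$ for $j$ large, i.e.\ $w\in G(i+j,i)^{-1}(U)$, and as $w$ is arbitrary the union equals $\mathbb{R}^{n}$. I would prove this by the same block induction, run in the contracting direction. Put $v_{k}:=G(i+k,i)(w)$ and $G_{k+1}=A_{k+1}+\widetilde G_{k+1}$. For $r=1$, $(v_{k})_{1}$ equals $A_{i+k}|_{E_{1}}\circ\cdots\circ A_{i+1}|_{E_{1}}$ applied to $w_{1}$, so $|(v_{k})_{1}|\le b^{k}|w_{1}|\to0$ by \eqref{e6.12}. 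For $r\ge2$, $(v_{k})_{r}=A_{i+k}|_{E_{r}}\big((v_{k-1})_{r}\big)+\widetilde G_{i+k,r}\big((v_{k-1})_{1},\dots,(v_{k-1})_{r-1}\big)$, where by the inductive hypothesis the lower blocks decay geometrically in $k$ and $\widetilde G_{i+k,r}$ vanishes to second order, so the correction term decays geometrically; solving the scalar recursion $a_{k}\le b\,a_{k-1}+(\text{geometrically small in }k)$ gives $(v_{k})_{r}\to0$, completing the induction.

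The step I expect to be the main obstacle is the inductive estimate in (i): one must verify with care that the degree of $G(i+j,i)^{-1}$ does \emph{not} grow with $j$ and that its polynomial coefficients grow only geometrically in $j$ --- this is exactly where the lower-triangular normal form of Lemmas \ref{l6.9} and \ref{l6.10} is indispensable, since the crude estimate for a composition of $j$ polynomial diffeomorphisms would only give a doubly exponential bound in $j$, which is useless here. Once that bookkeeping is set up, (ii) is the same mechanism in the easy, contracting direction and requires no new idea.
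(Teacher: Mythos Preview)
Your argument is correct, and for part (i) it takes a genuinely different route from the paper. The paper's proof does \emph{not} exploit the fact that the degree of $G(i+j,i)^{-1}$ stays bounded in $j$; instead it \emph{complexifies}. It extends each polynomial map $f:\mathbb{R}^n\to\mathbb{R}^n$ to a holomorphic $\widehat f:\mathbb{C}^n\to\mathbb{C}^n$, proves a separate claim (via a Vandermonde argument on the multi-homogeneous decomposition) that $\|\widehat f\|$ is controlled by $\|f\|$ and the degree, and then invokes the argument of Lemma~5.6 in \cite{Tam06} verbatim: the same block-triangular induction yields $|H_{j,k}(v)|\le\beta^j$ on the complex unit ball, after which the Schwarz lemma converts this sup-norm bound directly into the Lipschitz bound on $D(R)$. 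In other words, the paper needs holomorphicity only for the step ``bounded $\Rightarrow$ Lipschitz''. You replace that step by the observation that lower-triangularity forces $\deg G(i+j,i)^{-1}\le(\deg S)^{l-1}$ uniformly in $j$, so that the elementary equivalence of norms on polynomial maps of fixed degree gives Lipschitz control on $D(R)$ from the geometric coefficient bound. Your route is more self-contained and stays entirely in the real category, at the price of having to track degrees as well as norms through the block induction; the paper's route lets one import the complex argument wholesale, at the price of the complexification lemma. For part (ii) the two arguments are essentially the same: both reduce to showing $G(i+j,i)(w)\to0$ for every fixed $w$ via the contracting block recursion you describe.
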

\begin{proof}
Let $f:\mathbb{R}^n\rightarrow \mathbb{R}^n$ be a  polynomial map, then there exists a polynomial map $\widehat{f}:\mathbb{C}^n\rightarrow \mathbb{C}^n$
such that $\widehat{f}(Re(z^1(p),\ldots,Re(z^n(p))=f(Re(z^1(p),\ldots,Re(z^n(p))$ for any $p\in \mathbb{C}^n$, where $\{z^1,\ldots,z^n\}$ is a
complex coordinate system of $\mathbb{C}^n$.
For a subspace $E$ of $\mathbb{R}^n$, we define $\widehat{E} := E \otimes_{\mathbb{R}} \mathbb{C}
= \mathrm{span}_{\mathbb{C}}\{v_1,\ldots,v_k\} \subset \mathbb{C}^n$, where $\{v_1,\ldots,v_k\}$ is a basis of $E$.

(i) For simplicity, we assume that $R=1$ and $i=0$. For general $i$, the proof is similar. Note that the constants
in the proof do not depend on $i$. Write
\begin{equation}
\widehat{G(j,0)}^{-1}=\widehat{G_1}^{-1}\cdots \widehat{G_j}^{-1}=H_{j,1}\oplus\cdots\oplus H_{j,l}
\end{equation}
with $H_{j,k}(v)\in \widehat{E_{1,k}}$. By Lemma \ref{l6.10}, we have
\[\widehat{G^{-1}_{i+1}}=\widehat{A^{-1}_{i+1}}+\widehat{S_{i+1}},\]
where $\widehat{S_{i+1}}:\widehat{\mathbb{R}^n_{i+1}}\rightarrow \widehat{\mathbb{R}^n_{i}}$ is defined by
\[\widehat{S_{i+1}}(w_1\oplus\cdots\oplus w_l)=0\oplus \widehat{S_{i+1,2}}(w_1)\oplus\cdots\oplus \widehat{S_{i+1,l}}(w_1\oplus\cdots\oplus w_{l-1}).\]
Here $w_1\oplus\cdots\oplus w_l=\widehat{E_{i+1,1}}\oplus\cdots\oplus \widehat{E_{i+1,l}}=\widehat{\mathbb{R}^n_{i+1}}$.

\underline{{\textbf {Claim.}}} If $f:\mathbb{R}^n\rightarrow \mathbb{R}^n$ is a s homogeneous polynomial map of degree $m$, then there exists a constant $C$ depending only on $\|f\|$ and $m$ such that $\|\widehat{f}\|\leq C$.

If the claim is true, by Lemma \ref{l6.10}, we can conclude that $\widehat{G_j^{-1}}$, $\widehat{A_j^{-1}}$ and $\widehat{S_{j}}$ are bounded by a constant independent of $j$. Then we have
\[|H_{j,k}(v)|\leq \beta^j\]
for some constant $\beta$ for all $k$ and $j$ provided $|v|\leq 1$. The proof for this is similar to that of Lemma 5.6 in \cite{Tam06}.
By applying the Schwartz lemma, (i) is proved.

For simplicity, we denote any constant depending on $\|f\|$ and $m$ by $C$. To prove the claim we only need to prove that any component $\widehat{f_k}$ of $\widehat{f}$ satisfies
$\|\widehat{f_k}\|\leq C$. Let $\{v_1,\ldots,v_n\}$ be a basic of $\mathbb{R}^n$, and
$\alpha=(\alpha_1,\cdots,\alpha_n)$ a multi-index such that $|\alpha|=\sum_{k=1}^n \alpha_k=m\geq 1$.
We define $E_1=\mathrm{span}_{\mathbb{R}}\{v_1\}$ and $E'_1=\mathrm{span}_{\mathbb{R}}\{v_2,\cdots,v_n\}$. If $w_1\in E_1$, $w_2\in E'_1$, and $|w_1|,|w_2|\leq 1$, we have
\begin{equation}\label{e6.15}
f_k(c_1w_1\oplus c_2w_2)=\sum_{i=0}^m c_1^i c_2^{m-i} f_{k,(i,m-i)}(w_1\oplus w_2)\leq C
\end{equation}
for any $-1\leq c_1,c_2\leq 1$, where $f_{k,(i,m-i)}$ is the $(i,m-i)$-homogeneous part of $f_k$. Let $c_1=1$ and $c_2=x_i$, where $x_i=i/(m+1)$ for $i=1,\cdot\cdot\cdot,m+1$. Let
\[
A=\left(
                  \begin{array}{cccc}
                    1 & x_1 & \cdot\cdot\cdot & x_1^m \\
                    1 & x_2 & \cdot\cdot\cdot & x_2^m \\
                    \cdot & \cdot &  &\cdot  \\
                    \cdot & \cdot &  &\cdot \\
                    \cdot & \cdot &  &\cdot  \\
                    1 & x_{m+1} & \cdot\cdot\cdot & x_{m+1}^m \\
                  \end{array}
                \right)\mbox{ and }
y=\left(
    \begin{array}{c}
      f_{k,(m,0)}(w_1\oplus w_2) \\
      f_{k,(m-1,1)}(w_1\oplus w_2) \\
      \cdot \\
      \cdot \\
      \cdot \\
      f_{k,(0,m)}(w_1\oplus w_2). \\
    \end{array}
  \right)
\]
By \eqref{e6.15}, we get
\begin{equation}
|Ay|\leq C.
\end{equation}
Note that
\[\det(A)=\prod_{1\leq j<i\leq m+1}(x_i-x_j)\neq 0\]
and therefore $A$ is invertible. Then we have $\|f_{k,(\alpha_1,m-\alpha_1)}\|\leq C$.

Similarly, we can prove $\|f_{k,(\alpha_1,\alpha_2, m-\alpha_1-\alpha_2)}\|\leq C$ if we decompose $E'_1$ by $E'_1=E_2\oplus E'_2$, where
$E_2=\mathrm{span}_{\mathbb{R}}\{v_2\}$ and $E'_2=\mathrm{span}_{\mathbb{R}}\{v_3,\cdots,v_n\}$.
Continuing in this way, we can decompose $\mathbb{R}^n$ by $\mathbb{R}^n=E_1\oplus\cdots \oplus E_n$ and prove that $\|f_{k,\alpha}\|\leq C$.
Thus, we have $\|\widehat{f_{k,\alpha}}\|\leq C$ and $\|\widehat{f}\|\leq C$. This completes the proof of \underline{{\textbf {Claim}}}.

(ii) The proof is similar to that of Lemma 5.6 in \cite{Tam06}.

\end{proof}

Let $\beta$ be the constant in Lemma \ref{l6.11}. Note that $\beta$ does not depend on $i$ and $m$ provided $m\geq m_0$, where $m_0\geq 2$ is the integer in Lemma \ref{l6.8}. Fix $m\geq m_0$ such that
\begin{equation}
\delta^m\leq \frac{1}{2}\beta^{-1}
\end{equation}
where $0<\delta<1$ is the constant in \eqref{e6.11}. Let $G_{i,m}$, $T_{i,m}$ be the maps given in Lemma \ref{l6.9} which are defined on $D(r_m)$, $0<r_m<r_1<1$.
For simplicity, we denote $G_{i,m}$ by $G_i$, $T_{i,m}$ by $T_i$ and $r_m$ by $r$.
\begin{lemma}\label{l6.12}
Let $k\geq 0$ be an integer. Then
\[\Psi_k=\underset{l\rightarrow \infty}{\lim}G_{k+1}^{-1}\circ G_{k+2}^{-1}\circ\cdot\cdot\cdot\circ G_{k+l}^{-1}\circ T_{k+l}\circ F_{k+l}\circ\cdot\cdot\cdot\circ F_{k+2}\circ F_{k+1}\]
exists and is a nondegenerate smooth map from $D(r)$ to $\mathbb{R}^n$. Moreover, there is a constant $\gamma>0$ which is independent of $k$ such that
\begin{equation}
\gamma^{-1}D(r)\subset \Psi(D(r))\subset \gamma D(r).
\end{equation}
\end{lemma}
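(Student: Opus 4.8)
The plan is to exhibit the partial compositions defining $\Psi_k$ as a geometrically convergent sequence, and then to read off nondegeneracy and the two-sided inclusion from a quantitative inverse function theorem. Write $F(k+l,k):=F_{k+l}\circ\cdots\circ F_{k+1}$ and $G(k+l,k)^{-1}:=G_{k+1}^{-1}\circ\cdots\circ G_{k+l}^{-1}$, so that the $l$-th term of the sequence in the statement is $\psi_{k,l}:=G(k+l,k)^{-1}\circ T_{k+l}\circ F(k+l,k)$. After shrinking $r=r_m$ once and for all (which is permissible), every $F_i$ maps $D(r)$ into $D(r)$ and the conclusions of Lemma~\ref{l6.9} hold on $D(r)$, so each $\psi_{k,l}$ is a smooth map $D(r)\to\mathbb{R}^n$ fixing the origin. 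Since $T_{i+1}'(0)=\mathrm{Id}$, $F_{i+1}'(0)=A_{i+1}$ and $G_{i+1}'(0)=A_{i+1}$, the chain rule at $0$ gives $D\psi_{k,l}(0)=A(k+l,k)^{-1}A(k+l,k)=\mathrm{Id}$ for every $l$; regrouping the composition also yields the recursion $\psi_{k,l}=G_{k+1}^{-1}\circ\psi_{k+1,l-1}\circ F_{k+1}$.

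First I would prove uniform convergence on $D(r)$. Put $w:=F(k+l,k)(v)$, so $|w|\le\delta^l|v|\le\delta^l r$ by~\eqref{e6.11}. Using $T_{k+l}(w)=G_{k+l+1}^{-1}\bigl(G_{k+l+1}(T_{k+l}(w))\bigr)$, the fact that $G(k+l,k)^{-1}$ is $\beta^l$-Lipschitz and $G_{k+l+1}^{-1}$ is $\beta$-Lipschitz on the ball $D(R)$ of Lemma~\ref{l6.11}(i) (all intermediate points lie in a fixed such ball), and the approximation $|T_{k+l+1}F_{k+l+1}(w)-G_{k+l+1}T_{k+l}(w)|\le C(m)|w|^m$ of Lemma~\ref{l6.9}(iii), one gets
\[
\sup_{D(r)}\bigl|\psi_{k,l+1}-\psi_{k,l}\bigr|\ \le\ \beta\,C(m)\,r^{m}\,(\beta\delta^{m})^{l}\ \le\ \beta\,C(m)\,r^{m}\,2^{-l},
\]
since $\delta^{m}\le\tfrac12\beta^{-1}$. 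Hence $\sum_l(\psi_{k,l+1}-\psi_{k,l})$ converges uniformly on $D(r)$, so $\Psi_k=\lim_l\psi_{k,l}$ exists and is continuous, the bound being uniform in $k$; letting $l\to\infty$ in the recursion gives $G_{k+1}\circ\Psi_k=\Psi_{k+1}\circ F_{k+1}$.

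Next I would upgrade this to convergence in $C^{\infty}_{\mathrm{loc}}(D(r))$. Differentiating the telescoped estimate and using that $F_i$, $T_i$, $G_i$ and $G_i^{-1}$ have $C^p$ norms bounded independently of $i$ on $D(r)$ (for $F_i$ by the a priori estimates of Section~4, cf.\ Proposition~\ref{p6.3}; for $T_i$, $G_i$, $G_i^{-1}$ by Lemmas~\ref{l6.9}--\ref{l6.10}), the contraction $|F_i(v)|\le\delta|v|$, and Lemma~\ref{l6.11}(i), one obtains for each $p\ge 1$ a geometric bound $\|D^p(\psi_{k,l+1}-\psi_{k,l})\|_{C^0(D(r'))}\le C(p)\,2^{-l}$ on a slightly smaller ball $D(r')$, uniformly in $k$. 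Therefore $\psi_{k,l}\to\Psi_k$ in $C^{\infty}_{\mathrm{loc}}$; in particular $\Psi_k$ is smooth, $D\Psi_k(0)=\lim_l D\psi_{k,l}(0)=\mathrm{Id}$ (so $\Psi_k$ is nondegenerate), and $\|\Psi_k\|_{C^2(D(r'))}\le M$ with $M$ independent of $k$.

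To finish, $\|D\Psi_k(v)-\mathrm{Id}\|=\|D\Psi_k(v)-D\Psi_k(0)\|\le M|v|$ on $D(r')$, so after shrinking $r$ once more so that $Mr\le\tfrac12$ we get $\|D\Psi_k-\mathrm{Id}\|_{C^0(D(r))}\le\tfrac12$, uniformly in $k$. Consequently $|\Psi_k(v)|\le\tfrac32|v|$, so $\Psi_k(D(r))\subset D(\tfrac32 r)$; and for any $y$ with $|y|<\tfrac r2$ the map $x\mapsto x-\Psi_k(x)+y$ maps $\overline{D(r)}$ into itself and is a $\tfrac12$-contraction, hence has a fixed point $x^{*}$, which satisfies $|x^{*}|\le|y|+\tfrac12|x^{*}|<r$ (so $x^{*}\in D(r)$) and $\Psi_k(x^{*})=y$; thus $D(\tfrac r2)\subset\Psi_k(D(r))$. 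Since $\|D(\mathrm{Id}-\Psi_k)\|\le\tfrac12$ on the convex set $D(r)$, $\Psi_k$ is also injective there, hence a diffeomorphism onto its open image, and the asserted inclusion holds with $\gamma=2$, independent of $k$. The main obstacle is the uniform-in-$(k,l)$ control of the higher derivatives of the long compositions $\psi_{k,l}$ needed in the third step: there the special lower-triangular polynomial structure of the $G_i$ provided by Lemmas~\ref{l6.8}--\ref{l6.10} must be used together with the contraction property of the $F_i$, exactly as in Lemmas~5.6--5.7 of \cite{Tam06}.
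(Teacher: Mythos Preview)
Your proposal is correct and follows the same approach as the paper, which simply defers to Lemma~5.7 of \cite{Tam06}; your telescoping $C^0$ estimate via $\beta\delta^m\le\tfrac12$, the upgrade to $C^\infty_{\mathrm{loc}}$, and the quantitative inverse-function-theorem argument for the two-sided inclusion are precisely the real-analytic substitutes for the holomorphic tools (Cauchy estimates, Hurwitz, Koebe) used in the complex case. The uniform higher-derivative control of $G(k+l,k)^{-1}$ that you correctly flag as the main obstacle is obtained by the same complexification-plus-Cauchy-estimate device already employed in the proof of Lemma~\ref{l6.11}, after which your Fa\`a di Bruno bookkeeping (using $|DF(k+l,k)|\lesssim b^l$ and $b\le\delta$) goes through.
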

\begin{proof}
The proof is similar to that of Lemma 5.7 in \cite{Tam06}.
\end{proof}

Now we are ready to prove Theorem \ref{uniformization}.

\begin{proof}[Proof of Theorem \ref{uniformization}]
Since the flow \eqref{e1.2} is solvable, we have the corresponding K\"{a}hler-Ricci flow \eqref{e6.2} on the tangent bundle $TM$
is also solvable. Because the initial metric $g^T_0$ has maximum volume growth, we can construct $\Phi_t$ and $F_i$ as above.
We can also construct $G_i$, $T_i$ as in Lemma \ref{l6.9} so that Lemmas \ref{l6.11} and \ref{l6.12} are true.
Let $\Omega_i=\Phi_{iT}(D(r))$ where $r>0$ is the constant in Lemma \ref{l6.12}. By Proposition \ref{p6.3} and the fact that the solution
$g(t)$ of \eqref{e6.7} decays exponentially, $\{\Omega_i\}_{i\geq 1}$ exhausts $M$.
Consider the following smooth maps from $\Omega_i$ to $\mathbb{R}^n$:
\[S_i=G_1^{-1}\circ\cdots\circ G_i^{-1}\circ T_i\circ \Phi_{iT}^{-1}.\]
For each fixed $k$ and $l\geq 1$
\[S_{k+l}=G_1^{-1}\circ\cdots\circ G_k^{-1}\circ [G_{k+1}^{-1}\circ\cdots\circ G_{k+l}^{-1}\circ T_{k+l}\circ F_{k+l}\circ\cdots\circ F_{k+1}]\circ \Phi_{kT}^{-1}.\]
By Lemma \ref{l6.12}, we have $S=\lim_{i\rightarrow \infty}S_i$ exists and is a nondegenerate smooth map from $M$ into $\mathbb{R}^n$.
Moreover, $S=G_1^{-1}\circ\cdots \circ G_k^{-1}\circ \Psi_k \circ \Phi_{kT}^{-1}$ on $\Omega_k$, where $\Phi_i$ is the nondegenerate smooth map
in Lemma \ref{l6.12}. Hence
\[S(\Omega_k)=G_1^{-1}\circ\cdots\circ G_k^{-1}\circ \Psi_k(D(r))\supset G_1^{-1}\circ\cdots\circ G_k^{-1}(\gamma^{-1}D(r))\]
by Lemma \ref{l6.12}, for some $\gamma$ independent of $k$. Therefore $S(M)=\mathbb{R}^n$ by Lemma \ref{l6.11} (ii). This completes the proof of
Theorem \ref{uniformization}.
\end{proof}




\begin{thebibliography}{99}

\bibitem{Am16} Amari, S., {\em Information Geometry and Its Applications}, Applied Mathematical Sciences, vol.
194. Springer, Berlin (2016)

\bibitem{AN00} Amari, S. and Nagaoka, H., {\em Methods of Information Geometry}, Translations of Mathematical
Monographs, vol. 191. Am. Math. Soc./Oxford University Press, Providence/London (2000)

\bibitem{AJLS17} Ay, N., Jost, J., Le, H.V. and Schwachh\"{o}fer, L., {\em Information Geometry}, Springer, (2017)

\bibitem{Cao85} Cao, H.-D., {\em Deformation of K\"{a}hler metrics to K\"{a}hler-Einstein metrics on compact
K\"{a}hler manifolds}, Invent. Math. {\bf 81} (1985), no. 2, 359-372.

\bibitem{Tam06} Chau, A. and Tam, L.-F., {\em  On the complex structure of K{\"a}hler manifolds with nonnegative
curvature},
    J. Differential Geom. {\bf 73} (2006), no. 3, 491530.
\bibitem{r1} Chau, A. and Tam, L.-F., {\em  On a modified parabolic complex Monge-Amp\`{e}re equation
with applications},
    Math. Z. {\bf 269} (2011), 777-800.

\bibitem{CY77} Cheng, S.-Y. and Yau, S.-T., {\em On the regularity of the Monge-Amp\`{e}re equation $\det(\partial^2 u/\partial x_i \partial x_j) = F (x, u)$}, Commun. Pure Appl. Math. {\bf 30} (1977), 41-68.

\bibitem{r10} Cheng, S.-Y. and Yau, S.-T., {\em  On the real Monge-Amp\`{e}re equation
and affine flat structures},
     In:
Proceedings of the 1980 Beijing Symposium on Differential Geometry and Differential Equations, vol. 1-3, pp. 339-370 (1982).

\bibitem{Dombrowski62} Dombrowski, P., {\em On the Geometry of the Tangent Bundle}, J. Reine Angew. Math. {\bf 210} (1962), 73-88.

\bibitem{F99} Freed, D.S., {\em Special K\"{a}hler manifolds}, Commun. Math. Phys. {\bf 203} (1999) 31-52.

\bibitem{Gill11} Gill, M., {\em Convergence of the parabolic complex Monge-Amp\`{e}re equation on compact
    Hermitian manifolds}, Comm. Anal. Geom. {\bf 19} (2011), no. 2, 277-303.

\bibitem{Ha82} Hamilton, R. S., {\em Three-manifolds with positive Ricci curvature}, J. Differential Geom.
    {\bf 17} (1982), no. 2, 255-306.

\bibitem{Ha95} Hamilton, R. S., {\em  A compactness property for solutions of the Ricci flow},
    Amer. J. Math. {\bf 117} (1995), 545-572.

\bibitem{HW17} Han, Q. and Wang, G.-F., {\em Hessian surfaces and local Lagrangian embeddings},
    Ann. Inst. H. Poincar\'{e} Anal. Non Lin\'{e}aire {\bf 35} (2018), no. 3, 675-685.

\bibitem{H99} Hitchin, N.J., {\em The moduli space of complex Lagrangian submanifolds}, Asian J. Math. {\bf 3} (1999) 77-91.

\bibitem{r7} Hochard, R., {\em Short-time existence of the Ricci flow on complete, non-collapsed 3-
manifolds with Ricci curvature bounded from below},  arXiv preprint, arXiv: 1603.08726 (2016).
\bibitem{r2} Krylov, N.V., {\em  Lectures on elliptic and parabolic equations in H{\"o}lder spaces},
    Graduate Studies in Mathematics, {\bf 12}.  American Mathematical Society, Providence, RI, 1996.
\bibitem{r8} Krylov, N.V., {\em Boundedly nonhomogeneous elliptic and parabolic equations},
    Izvestia Akad. Nauk.
SSSR {\bf 46} (1982), 487-523. English translation in Math. USSR Izv. {\bf 20} (1983), no. 3, 459-492.
\bibitem{r4} Lee, M.-C. and Tam, L.-F., {\em  Chern-Ricci flows on noncompact complex manifolds},
    J. Differential Geom. {\bf 115} (2020), no. 3, 529-564.
\bibitem{r6} Lee, M.-C. and Tam, L.-F., {\em On existence and curvature estimates of Ricci flow},  arXiv preprint, arXiv: 1702.02667 (2017).
\bibitem{r9} Lieberman, G., {\em Second Order Parabolic Differential Equations},
    World Scientific, Singapore New
Jersey London Hong Kong, 1996.

\bibitem{Liu19} Liu, G., {\em On Yau's uniformization conjecture}, Camb. J. Math. {\bf 7} (2019), no. 1-2, 33-70.

\bibitem{LYZ05}
Loftin, J., Yau, S.-T. and Zaslow, E., {\em Affine manifolds, SYZ geometry and the ``Y'' vertex}, J. Differ. Geom. {\bf 71} (2005), 129-158.

\bibitem{r12} Mirghafouri, M. and Malek, F., {\em Long-time existence of a geometric flow on closed Hessian manifolds},
     J. Geom. Phys. {\bf 119} (2017), 54-65.
\bibitem{r13} Puechmorel, S. and T{\^o}, T.D., {\em Convergence of the Hesse-Koszul flow on compact Hessian manifolds},
    Ann. Inst. H. Poincar\'{e} C Anal. Non Lin\'{e}aire. {\bf 40} (2023), no. 6, 1385-1414.
\bibitem{Rolfsen76} Rolfsen, D., {\em Knots and Links. Wilmington}, DE: Publish or Perish Press, 1976.
\bibitem{Shi89} Shi, W.-X., {\em Ricci deformation of the metric on complete noncompact Riemannian
    manifolds}, J. Differential Geom. {\bf 30} (1989), 223-301.
\bibitem{shi97} Shi, W.-X., {\em  Ricci Flow and the uniformization on complete non compact
K{\"a}hler manifolds},
    J. Differential Geom. {\bf 45} (1997), no. 1, 94-220.
\bibitem{r5} Sherman, M. and Weinkove, B., {\em  Local Calabi and curvature estimates for the Chern-Ricci
flow},
    New York J. Math. {\bf 19} (2013), 565582.

\bibitem{Shima81} Shima, H., {\em Hessian manifolds and convexity, Manifolds and Lie groups}, Papers in Honor of Y. Matsushima,
    Progress in Mathematics, Birkh\"{a}user {\bf 14}, (1981), 385-392.

\bibitem{r11} Shima, H., {\em The Geometry of Hessian Structures},
    World Scientific Pub Co Inc (March 1, 2007)

\bibitem{SYZ96} Strominger, A., Yau, S.-T. and Zaslow, E., {\em Mirror symmetry is T-duality}, Nuclear Phys. B,
{\bf 479} (1996), no. 1-2, 243-259.

\bibitem{r3} Tosatti, V. and Weinkove, B., {\em  On the evolution of a Hermitian metric by its Chern-Ricci
form},
    J. Differential Geom. {\bf 99} (2015), no. 1, 125-163.

\bibitem{TW21} Tosatti, V. and Weinkove, B., {\em  The Chern-Ricci flow},
     Atti Accad. Naz. Lincei Rend. Lincei Mat. Appl. {\bf 33} (2022), no. 1, 73-107.

\bibitem{Yau89} Yau, S.-T., {\em A review of complex differential geometry, Several complex variables
and complex geometry}, Part 2 (Santa Cruz, CA, 1989), 619625, Proc. Sympos. Pure
Math., 52, Part 2, Amer. Math. Soc., Providence, RI, 1991.


\end{thebibliography}
\end{document}